\newtheorem{theorem}{Theorem}[section]
\newtheorem{lemma}{Lemma}[section]
\newtheorem{definition}{Definition}[section]
\newtheorem{remark}{Remark}[section]
\newtheorem{example}{Example}[section]
\newtheorem{simulation algorithm}{Simulation Procedure}[section]
\renewcommand{\section}{
         \setcounter{equation}{0}
         \@startsection {section}{1}{\z@}{-3.5ex plus -1ex minus
         -.2ex}{2.3ex plus .2ex}{\normalsize\bf}
}
\renewcommand{\subsection}{
         \@startsection {subsection}{1}{\z@}{-3.5ex plus -1ex minus
         -.2ex}{2.3ex plus .2ex}{\normalsize\bf}%{\center\normalsize\bf}
}
\def\reals{{\rm\vrule depth0ex width.4pt\kern-.08em R}}
\def\bbbz{{\mathchoice {\hbox{$\sf\textstyle Z\kern-0.4em Z$}}
{\hbox{$\sf\textstyle Z\kern-0.4em Z$}}
{\hbox{$\sf\scriptstyle Z\kern-0.3em Z$}}
{\hbox{$\sf\scriptscriptstyle Z\kern-0.2em Z$}}}}
\newcommand{\nc}{\newcommand}
\nc{\W}{{\bf W}}
\nc{\A}{{\bf A}}
\nc{\bL}{{\bf L}}
\nc{\bH}{{\bf H}}
\nc{\C}{{\cal C}}
\def\eq#1{(\ref{e:#1})}
\def\elabel#1{\label{e:#1}}
\begin{document}
%\begin{titlepage}
\begin{center}
\Large\bf Internet of quantum blockchains: security modeling and dynamic resource pricing for stable digital currency
%:Fairness and Pareto Optimality
%Pareto Optimal Nash Equilibrium
\end{center}
\begin{center}
\large\bf Wanyang Dai~\footnote{The project is funded by National Natural Science Foundation of China with Grant No. 11771006, Grant No. 10971249, and Grant No. 11371010. This paper initially appeared in Preprints (called Proceeding) of 22th Annual Conference of Jiangsu Association of Applied Statistics, pages 7-45, November 13-15, 2020, Suzhou, China.}
\end{center}
\begin{center}
\small Department of Mathematics\\
and State Key Laboratory of Novel Software Technology\\
Nanjing University, Nanjing 210093, China\\
Email: nan5lu8@nju.edu.cn\\
25 April 2021
\end{center}

\vskip 0.1 in
\begin{abstract}
Internet of quantum blockchains (IoB) will be the future Internet. In this paper, we make two new contributions to IoB: developing a block based quantum channel networking technology to handle its security modeling in face of the quantum supremacy and establishing IoB based FinTech platform model with dynamic pricing for stable digital currency. The interaction between our new contributions is also addressed. In doing so, we establish a generalized IoB security model by quantum channel networking in terms of both time and space quantum entanglements with quantum key distribution (QKD). Our IoB can interact with general structured things (e.g., supply chain systems) having online trading and payment capability via stable digital currency and can handle vector-valued data streams requiring synchronized services. Thus, within our designed QKD, a generalized random number generator for private and public keys is proposed by a mixed zero-sum and non-zero-sum resource-competition pricing policy. The effectiveness of this policy is justified by diffusion modeling with approximation theory and numerical implementations.\\

%\noindent {\em Subject classifications:}
\noindent{\bf Key words:} Internet of quantum blockchains (IoB), IoB security modeling, FinTech platform model, stable digital currency, dynamic pricing.
\end{abstract}
%\end{titlepage}

\section{Introduction}

Internet of quantum blockchains (IoB) will be the future Internet (see, e.g., Dai~\cite{dai:plamod,dai:quacom}, Rajan and Visser~\cite{rajvis:quablo}, SIR Forum~\cite{sirfor}). Especially, with the coming of big data era (in terms of data, computing power, and algorithms) and the quick evolution from the currently implementing Industrial 4.0 (IR 4.0, see, e.g., Schwab~\cite{sch:fouind}) to IR 6.0 (see, e.g., SIR Forum~\cite{sirfor}: The Sixth Industrial Revolution Forum), blockchain and quantum computing will be the core technology of IR 6.0 (see, e.g., Courtland~\cite{cou:chiqua}, Dai~\cite{dai:plamod,dai:quacom}, Deutsch~\cite{deu:quacom}, Feynman~\cite{fey:quamec}, Gibney~\cite{gib:chisat}, Nielsen and Chuang~\cite{niechu:quacom}, SIR Forum~\cite{sirfor}). Comparing with the nowadays' high-performance computing facilities, the future quantum computing system will have extremely powerful processing, storage, tracing, and management capability (see, e.g., the latest development and quantum supremacy of quantum computers in Arute {\it et al.}~\cite{aruary:quasup} and Figure~\ref{quantumsupremacy} (enhanced from Dai~\cite{dai:quablo} for public availability)).
%\begin{figure}[tbh]
%%\centerline{\epsfxsize=7.0in\epsfbox{QuantumSupremacy.eps}}
%\centerline{\epsfxsize=6.0in\epsfbox{QuantumSupremacy.eps}}
%%\centerline{\epsfxsize=3.5in\epsfbox{blockchainmanager.eps}}
%\caption{\footnotesize The latest development and quantum supremacy of quantum computers{\textcolor{red}{, where, QC means quantum computing, CC means %classic computing, CPU means central processing unit, and GPU means graphic processing unit.}}}
%\label{quantumsupremacy}
%\end{figure}
Furthermore, with the effective designs of quantum computing chips and algorithms (see, e.g., Dai~\cite{dai:quacom}, Deutsch~\cite{deu:quacom}, Feynman~\cite{fey:quamec}, Nielsen and Chuang~\cite{niechu:quacom}), the powerful quantum computers are recently  announced available by IBM, Google, Rigetti, etc. These quantum computers make it possible for us to realize the future generation of Internet called IoB (Internet of (quantum) blockchains), i.e., to realize the design of quantum cloud-computing based future Internet with (quantum) blockchain communication protocol in around 25 years as firstly claimed in Dai~\cite{dai:quacom} with the support of Arute {\it et al.}~\cite{aruary:quasup}, etc..

%As investigated in Dai~\cite{dai:plamod,dai:quacom}, (quantum) blockchain is originated from IP and will be the future Internet (IoB) communication %protocol. Starting from the U.S. based ARRP (Advanced Research Projets Agency), IP was initially deployed in U.S. Navy around the late 1960s as %summarized in Comer~\cite{com:inttcp}.

In history, owing to the capacity limitations with respect to processing and storage devices,
%as shown in Figure~\ref{P2P-IP.eps},
the Internet IP protocol is a simple path routing algorithm oriented one and the transmitted IP packet in each node is not (or not fully) stored as summarized in Comer~\cite{com:inttcp}.
%\begin{figure}[tbh]
%%\centerline{\epsfxsize=7.2in\epsfbox{P2P-IP.eps}}
%\centerline{\epsfxsize=6.0in\epsfbox{P2P-IP.eps}}
%\caption{\footnotesize P2P $\&$ IP to blockchain communication protocol}
%\label{P2P-IP.eps}
%\end{figure}
However, due to the rapid increasing of capacity, the blockchain technology with more powerful (e.g., utility-maximization or hash function based) strategy planning capability and data storage at each linked node is quickly deploying while it meets the stringent quality of service (QoS) requirements in various real-world applications (e.g., online communication services and payments with dynamic pricing via stable digital currency).
%Furthermore, this technology is even foreseen its usage in the U.S. Navy Aegis communication system with the encryption of 256 bit hashes (see, e.g., %Figure~\ref{P2P-IP.eps}). In addition, due to the the new quantum-computing Moore's law (see, e.g., Bertels~\cite{ber:quacom} and %Dai~\cite{dai:quacom}) and the realizations of quantum computers with quantum supremacy as shown in Figure~\ref{quantumsupremacy}, we foresee that the %current Internet will be replaced by quantum blockchain based IoB in around 25 years. This IoB will be powered by quantum-cloud-computing with the storage capability in each service center and switch node as proposed in Dai~\cite{dai:plamod,dai:quacom} and Figure~\ref{P2P-IP.eps} (see also the US White House 2020 QNSP plan and China 2020 IoB White Paper).
In other words, the more capable blockchain protocol based on various smart contracts and intelligent engines can be efficiently implemented over the future IoB.
%These smart contracts and intelligent engines include those in terms of resource allocations as studied in Dai~\cite{dai:optrat,dai:plamod,dai:quacom} and those appeared in supply chain finance as shown in Figure~\ref{three2multi}.
%\begin{figure}[tbh]
%%\centerline{\epsfxsize=7.2in\epsfbox{three2multi.eps}}
%\centerline{\epsfxsize=6.0in\epsfbox{three2multi.eps}}
%\caption{\footnotesize Smart contract within blockchain}
%\label{three2multi}
%\end{figure}
%The smart contract processing in a supply chain finance system may be a multi-parties involved one and is a generalization of technologies for voice/video conferences and Bitcoin/Ethereum with decision recording storages (see, e.g., Figures~\ref{P2P-IP.eps}-\ref{three2multi}, Buterin~\cite{but:ethnex}, Nakomoto~\cite{nak:bitpee}). When various complex online payments and transactions in a supply chain are involved as shown in Figure~\ref{SupplyChainFinance}, the stable digital currency and its dynamical pricing within a FinTech platform or a generalized IoT system will be concerned, which will be our focal point in Section~\ref{fodpsdc} of this paper.

Therefore, both US White House 2020 Quantum Network Strategy Plan (QNSP) and China 2020 IoB white paper accompanying with the so-called ``New Seven Capital Construction (NSCC)" (see more insights/designs in Dai~\cite{dai:bloind} and English summary in Morgan Staley's Report by Xing {\em et al.}~\cite{xin:newinf}) are announced in the early year of 2020. However, this powerfulness introduces severe security issue to the currently implementing blockchain as shown in Figure~\ref{quantumsupremacy}. Hence, in this paper, we develop new block based quantum channel networking technology together with the existing discussions (see, e.g., Bennett and Brassard~\cite{benbra:quacry}, Dai~\cite{dai:plamod,dai:quacom}, Rajan and Visser~\cite{rajvis:quablo}, SIR Forum~\cite{sirfor}, Yin {\it et al.}~\cite{yinli:entsec}) to handle the IoB security modeling issue. In this way, our IoB can be claimed as a decentralized (or partially decentralized) security (or secret) union system among users through quantum encryption with quantum key distribution (QKD) and in terms of both time and space quantum entanglements.

Furthermore, in the future more secured IoB, quantum cloud-computing centers will have strong artificial intelligence (AI) to support the future generalized Internet of Things (IoT) or FinTech platform for big data streams and NSCC with dynamic demand-side and supply-side of complex economic structures (e.g., a power and energy grid and those systems in Buterin~\cite{but:ethnex}, Dai~\cite{dai:plamod,dai:quacom}, %Paraskova~\cite{par:oilcoi}, 
and Xing {\em et al.}~\cite{xin:newinf}). More precisely, in a generalized IoT or a FinTech platform in support of digital economy, we try to realize the blockchain based smart contracts and online dynamic pricing with authentication and registration for users and resources through stable digital currency as shown in Figures~\ref{blockchain-fintech}-\ref{blockchaincurrency}.
\begin{figure}[tbh]
%\centerline{\epsfxsize=7.0in\epsfbox{Blockchain-FinTech.eps}}
\centerline{\epsfxsize=5.6in\epsfbox{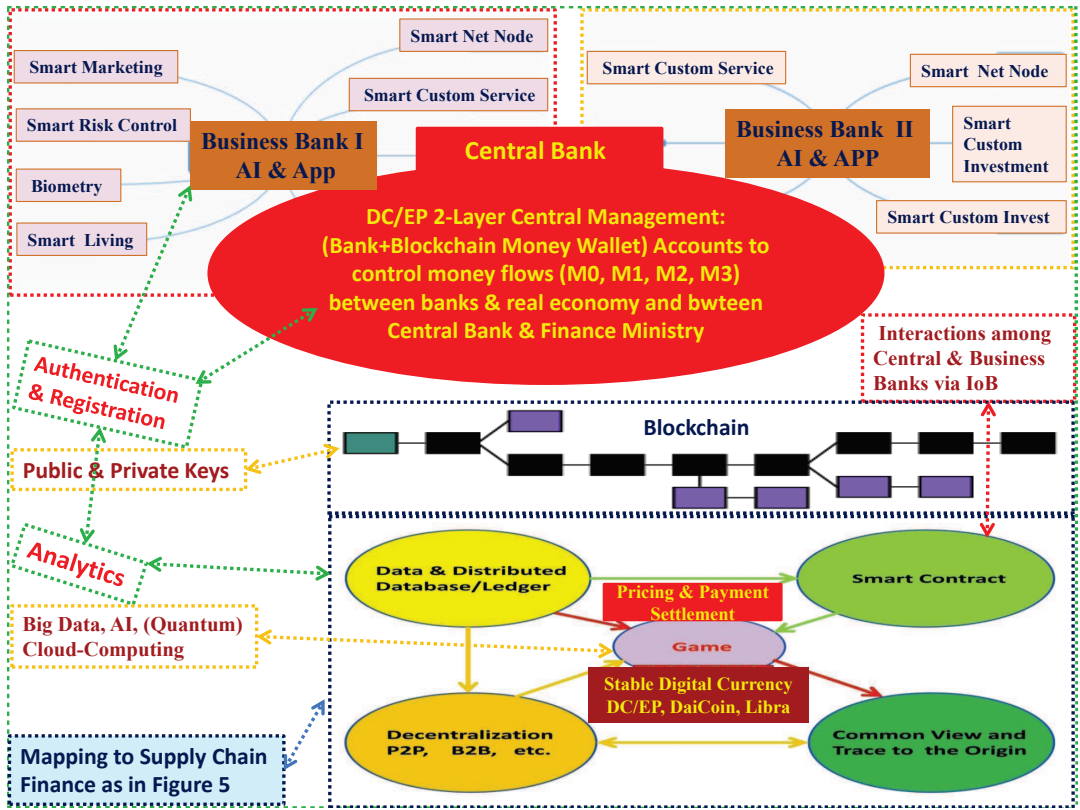}}
\caption{\footnotesize FinTech platform for central $\&$ business banks. The upper-half of this figure presents the two-layer design with three functionalities: authentication, registration, and analytics, which can be implemented via IoB as in the lower-right graph of this figure. The first two functionalities can be done via public and private keys in IoB. The third one can be realized via (quantum) cloud-computing with smart contracts, etc. Each business bank offers services to customers (e.g., the supply chain financing service in Figure 2). DC/EP means digital currency/electronic payment and M0-M3 are the four types of currency supplies. DaiCoin is the first ever designed stable digital currency as illustrated in Figure 5.}
\label{blockchain-fintech}
\end{figure}
\begin{figure}[tbh]
%\centerline{\epsfxsize=7.0in\epsfbox{AI-BigData-Blockchain.eps}}
\centerline{\epsfxsize=6.0in\epsfbox{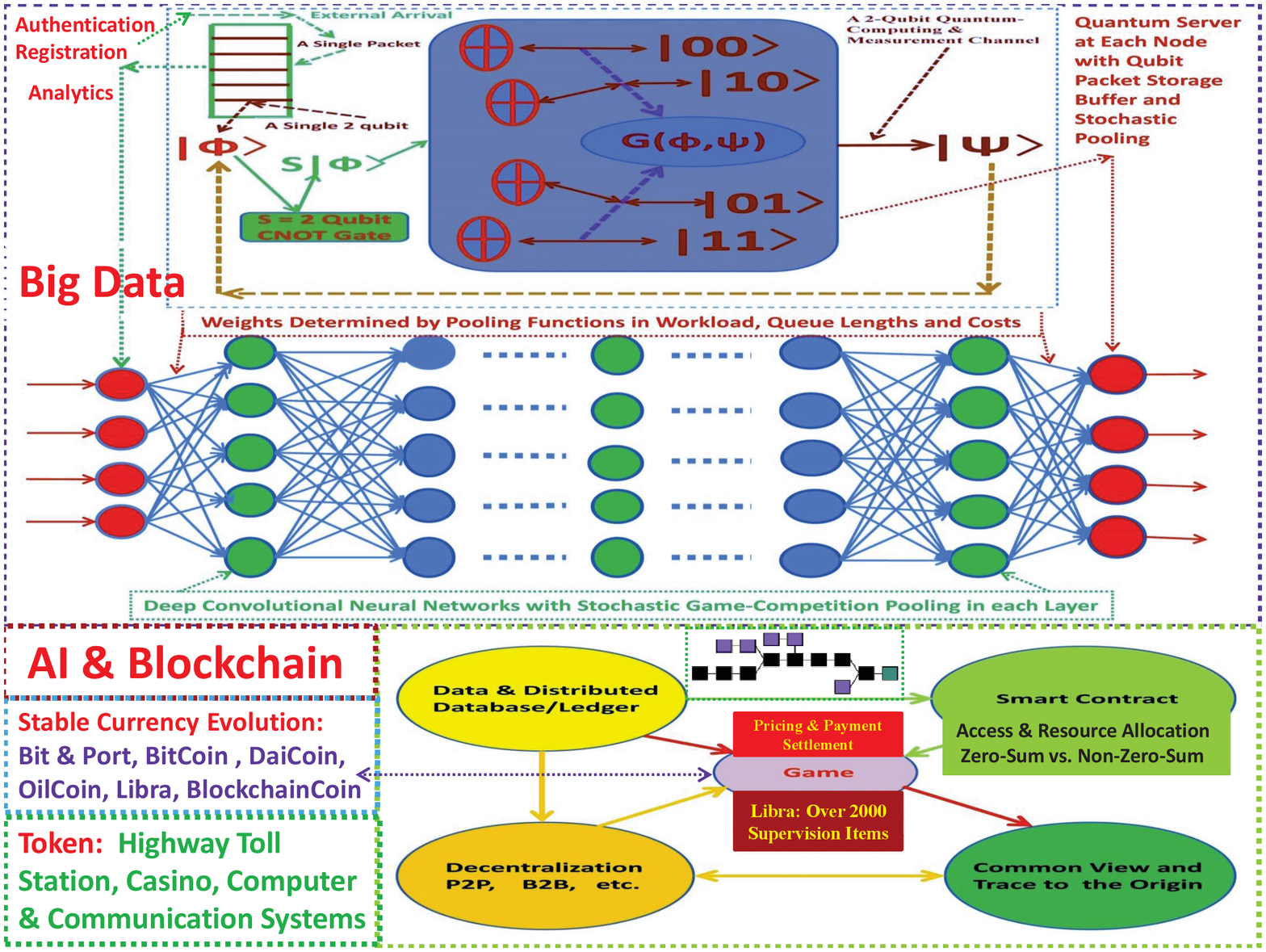}}
\caption{\footnotesize Token and stable digital currency with AI and blockchain}
\label{blockchaincurrency}
\end{figure}
Therefore, around the world and within China, various business and central banks are quickly announcing their proposals and implementations concerning AI and blockchain based FinTech to support the real-world and digital economy (see, e.g., Figure~\ref{blockchain-fintech} and Dai~\cite{dai:inseff} for an illustration). However, there are many issues to be solved concerning how to efficiently implement the business models and technical proposals over IoB. For example, the security issue for authentication and registration. Furthermore, due to the direct interactions between real economy and business banks with associated tax and digital tax issues, the disputes between a nation's central bank and his finance ministry (see, e.g., Figure~\ref{blockchain-fintech}) and even among countries come up. Therefore, just like Libra or digital US dollar, the central bank in China announces his stable digital blockchain currency plan to control the money flows dominated by digital currencies currently used by various business banks. In this digital currency plan, both central bank issuing storehouse and business bank storehouse are designed with authentication, registration, and big data analytics capability as shown in Figure~\ref{blockchain-fintech}-\ref{blockchaincurrency}. Furthermore, this blockchain currency plan is also very helpful to the widely concerned supply chain finance (see, Figure~\ref{supplychainfinance} for an illustration).
%and the quickly evolving health-care system (see, Figure~\ref{MedicalDiagnosis} for illustrations).
In these systems, the cash payments are frequently impossible (e.g., due to production time delays/production lead times in MTO systems)
%or during pandemic/epidemic periods)
and digital currency can be used to replace bank notes, e-bill, etc. as online payments. Thus, how to conduct dynamic pricing for stable digital currency in support of online payments and transactions for digital economy in different scenarios will be an important issue. It will be the focal point of our study in this paper. This newly added random dynamic price information together with resource allocation information and node (bank, cloud service center, and even CPU) information can be used to generate nonce values and private quantum keys through certain utility (hash) functions just like a random number generator in an existing blockchain system (See, e.g., Buterin~\cite{but:ethnex}, Dai~\cite{dai:plamod,dai:quacom}, Iansiti and Lakehani~\cite{ianlak:trublo}, Nakamoto~\cite{nak:bitpee}, Rajan and Visser~\cite{rajvis:quablo}).

%{\textcolor{red}{(Add something here ???), Central Bank of China control M0, etc., Tax. Digital Tax $\&$ Tariff unfair claimed by different %countries %like US, Turkey, and France, e.g., wine, 1/8/2019}}
%\begin{figure}[tbh]
%%\centerline{\epsfxsize=7.0in\epsfbox{SupplyChainFinance.eps}}
%\centerline{\epsfxsize=6.0in\epsfbox{SupplyChainFinance.eps}}
%\caption{\footnotesize Supply chain finance and communication link: a comparison. In this figure, ATO means assemble to order, MTO means make to %order, DD means digital dollar, DC/EP means (China Central Bank) digital currency/electrnic payment, TCP means transmission control protocol, IoB %means Internet of Blockchains.}
%\label{SupplyChainFinance}
%\end{figure}
%\begin{figure}[tbh]
%%\centerline{\epsfxsize=7.0in\epsfbox{MedicalDiagnosis.eps}}
%\centerline{\epsfxsize=6.0in\epsfbox{MedicalDiagnosis.eps}}
%\caption{\footnotesize FinTech and the trend of medical diagnosis. This figure is slightly modified from the ppt slice of Wanyang Dai's plenary talk %in Biomedical Engineering-2019 by adding the words ``Pandemic" and ``covid-19".}
%\label{MedicalDiagnosis}
%\end{figure}

Therefore, due to the involvement of dynamic pricing activities, we consider the interaction between our IoB and general structured things (e.g., supply chain, energy, and health-care systems) as a general FinTech platform model in support of digital economy with online trading and payment capability through stable digital currency while supporting online resource scheduling. In realize the interaction, we propose a resource-competition oriented dynamic pricing and scheduling policy in the supply-side. This policy consists of three stages: In the first stage, we compute the decision of zero-sum game-competition based dynamic users' selection among all the users; In the second and third stages, we derive the decision of non-zero-sum game-competition based resource-sharing among selected users while conducting dynamic pricing. The effectiveness of our designed policy is justified by diffusion modeling with approximation theory and numerical implementations.

To characterize the internal data flow fluctuations of our IoB based FinTech (or IoT) system (especially for vector-valued data of multiple characteristic indices requiring synchronized online service, see, e.g., Skianis {\it et al.}~\cite{skikon:measta},  Toffano and Dubois~\cite{tofdub:quaeig}), we use the triply stochastic renewal reward process (TSRRP) to model the random dynamics of the input quantum qubit data packet flows called big data flows (see, e.g., Dedi\'c and Stanier~\cite{dedsta:towdif}, Dai~\cite{dai:plamod,dai:quacom}, Snijders{\em et al.}~\cite{snimat:bigdat}) in the demand-side. Furthermore, we model the service rate capacity available for resource-competing users at each pool as a randomly capacity region evolving with a finite state continuous Markov chain (FS-CTMC). The parallel-queues in this FinTech platform model are used to storage and buffer quantum qubit data packets from their corresponding users. Each queue may be served at the same time through multiple smart quantum-computing service pools while each pool may also serve multiple queues simultaneously by running intelligent policies in the blockchain. Nevertheless, to reflect the dynamic evolving nature of real-world systems and to realize the decentralized operation in a blockchain, the users to be selected at a time is random, the number of pools to serve a specific queue is random, and the number of queues to be served by a given pool is also random. The efficiency or optimization concerning our proposed policy is in terms of revenue, profit, cost, system delay, etc. We model them through some utility (or hash) functions in terms of the performance measures of their internal quantum qubit data flow dynamics such as queue length and workload processes. To demonstrate the usefulness of our policy, we derive a reflecting diffusion with regime-switching (RDRS) model for the performance measures under our designed policy to offer services to different users in a cost-effective, efficient, and fair way. Based on this RDRS model, our proposed policy is effectively implemented with numerical simulations.

The remainder of this paper is organized as follows. In Section~\ref{secufin}, we develop techniques to deal with IoB security modeling in face of the quantum supremacy and introduce stable digital currency with dynamic pricing. In Section~\ref{fodpsdc}, we formulate system model for dynamic resource pricing of stable digital currency with the required primitives. In Section~\ref{pmodeling}, we present the RDRS model for system performance modeling and main theorem based on a three-stage (i.e., users-selection, dynamic pricing, and resource-competition scheduling) policy. Simulation case studies to show the effectiveness of our policy are also given in this section. In Section~\ref{rdrsm}, we theoretically prove our main theorem. In Section~\ref{cremark}, we give the conclusion of this paper.

\section{IoB Security modeling and stable digital currency}\label{secufin}
%\section{The evolution of IoB and its security}\label{secufin}

In this section, we first develop a block based quantum channel networking technology for IoB security modeling corresponding to the security problem raised by SIR Forum~\cite{sirfor} and displayed in Figure~\ref{quantumsupremacy}
\begin{figure}[tbh]
%\centerline{\epsfxsize=7.0in\epsfbox{QuantumSupremacy.eps}}
\centerline{\epsfxsize=5.6in\epsfbox{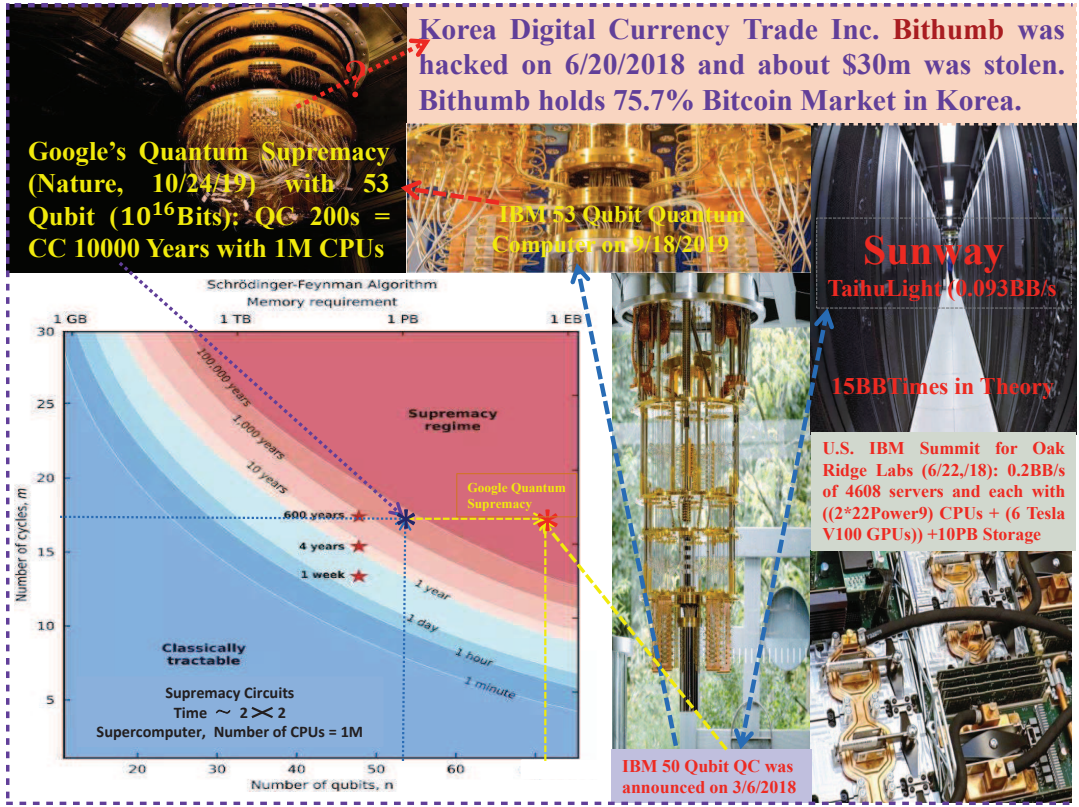}}
%\centerline{\epsfxsize=3.5in\epsfbox{blockchainmanager.eps}}
\caption{\footnotesize Blockchain security and quantum supremacy of quantum computers, where, QC means quantum computing, CC means classic computing, CPU means central processing unit, GPU means graphic processing unit, and BB means billion billion. Furthermore, the currently available China Sunway CC computer and US IBM Summit computer pictured in the lower-right graphs are for the comparison purpose. The Schr$\ddot{o}$dinger-Feynman algorithm memory requirement pictured in the lower-left graph is for the illustration purpose and the red asterisk is the Google quantum supremacy point.}
\label{quantumsupremacy}
\end{figure}
with related illustration in Subsection~\ref{semodiob}. Then, we introduce the concept of stable digital currency as a preparation for its more involved dynamic pricing modeling problem (that will be presented in the next section due to its length).

\subsection{IoB Security modeling}\label{semodiob}

Theoretically, quantum computer can break up any cryptography code used in the existing blockchain systems due to its high-performance computational power. For example, the claimed Google's quantum supremacy with 53 qubit ($10^{16}$ bits): 200 second quantum computing power is approximately 10000 year classic computing power as shown in Arute {\it et al.}~\cite{aruary:quasup} and Figure~\ref{quantumsupremacy}. This quantum supremacy causes the wide concern about the security of blockchain and people wish to have a solution to solve the hacking issue as happened in Korea Digital Currency Trade Inc. (see, e.g., Figure~\ref{quantumsupremacy}). Hence, in this subsection, we develop a generalized IoB security model via a method of block based quantum channel networking.
\begin{figure}[tbh]
%\centerline{\epsfxsize=7.2in\epsfbox{QuantumBlockchainSecurity.eps}}
\centerline{\epsfxsize=5.6in\epsfbox{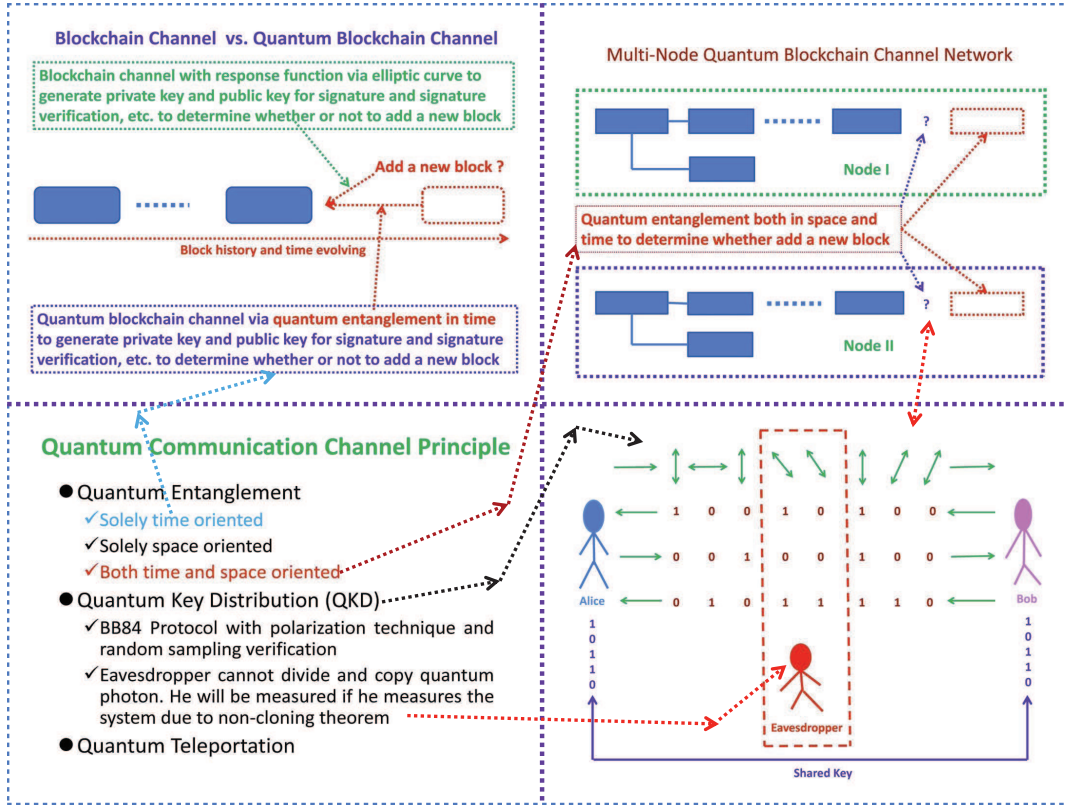}}
\caption{\footnotesize IoB Security modeling based on each quantum channel security. The upper-left graph displays a quantum channel and entanglement between the target future block and its history blocks in time. The upper-right graph displays an added quantum channel between two nodes where the associated quantum entanglements can be in both time and space. The key security techniques for each quantum channel are listed in the two lower graphs and titled as ``Quantum Communication Channel Principal".}
\label{blockchainsecurity}
\end{figure}

More precisely, we set up an IoB security model by networking various quantum channels among blockchain blocks. In doing so, the key point is to establish the corresponding quantum channel between any two blockchain blocks within the same IoB physical node or in different IoB physical nodes while to develop the more safe security model for an quantum channel. The dynamical relationship of input and output qubit data flows over a quantum channel can be modeled through a quantum transfer function (or called a quantum Hash function).
%These blocks can be in different physical network nodes or in a same physical network node.
If two blocks are in two different nodes, the corresponding quantum channel is space-oriented. If two blocks are in the same node and are consecutively ordered in time, the corresponding quantum channel is time-oriented. In this way, a generalized IoB security model can be established by networking various quantum channels via embedding them into the IoB coupled hardware and software system. In implementing this IoB security model, we need to replace the classic cryptographic Hash functions by the corresponding quantum cryptographic Hash functions. Then, by classifying these quantum hash functions into the generalized utility functions and conducting dynamic resource pricing, we can map our newly derived pricing decision information with users' information, nodes' information, etc. into our IoB based public and private quantum keys for being added blocks among nodes. Note that, the dynamic pricing decision information here acts as the generalized nonce value in our quantum blockchain. It is generated by our newly designed decision-making algorithm with the functionality as the traditional random number generator used in a classic blockchain (see, e.g., Buterin~\cite{but:ethnex} and Nakamoto~\cite{nak:bitpee}). This new algorithm with IoB performance modeling justification will be presented in the subsequent sections due to its length. Furthermore, unlike the classic cryptography or even existing quantum cryptography, we here need to integrate more quantum mechanical technologies into our newly designed IoB security model (especially in the channel security level). These technologies include the generalized quantum channel modeling formula, the space-time quantum entanglement and the quantum key distribution (QKD) protocols (e.g., BB84 protocol) with polarization scheme and random sampling verification for quantum channel security (see, e.g., Dai~\cite{dai:quacom}, Yin {\it et al.}~\cite{yinli:entsec}, Rajan and Visser~\cite{rajvis:quablo}, Bennett and Brassard~\cite{benbra:quacry}, %Bernstein {\it et al}~\cite{berbuc:posqua},
Lange and Steinwandt~\cite{lanste:posqua}).

Concerning this point, our security model for each quantum channel within our IoB can be developed as shown in Figure~\ref{blockchainsecurity} (enhanced from Dai~\cite{dai:secqua} for public availability).  Since the quantum channels can be established inside a single node or among different nodes, we can conduct quantum entanglements with respect to time-indexed history blocks inside a node as shown in the upper-left graph of Figure~\ref{blockchainsecurity} or with respect to space-indexed position blocks among nodes as shown in upper-right graph of Figure~\ref{blockchainsecurity} to decide whether add new blocks or not. Even more, based on the non-cloning theorem (see, e.g., Niestegge~\cite{nie:noncon}, Wootters and Zurek~\cite{woozur:sinqua}), the well-known quantum key distribution (QKD) BB84 protocol with polarization technique and random sampling verification (see, e.g., Bennett and Brassard~\cite{benbra:quacry}) can be incorporated into our time and space based quantum channels as shown in the two lower graphs of Figure~\ref{blockchainsecurity}.

\subsection{Stable digital currency with dynamic pricing}

%The smart contract processing in a supply chain finance system may be a multi-parties involved one and is a generalization of technologies for voice/video conferences and Bitcoin/Ethereum with decision recording storages (see, e.g., Figures~\ref{P2P-IP.eps}-\ref{three2multi}, Buterin~\cite{but:ethnex}, Nakomoto~\cite{nak:bitpee}). When various complex online payments and transactions in a supply chain are involved as shown in Figure~\ref{SupplyChainFinance}, the stable digital currency and its dynamical pricing within a FinTech platform or a generalized IoT system will be concerned, which will be our focal point in Section~\ref{fodpsdc} of this paper.

In this subsection, we first introduce the concept of stable digital currency and then present the real-world practice of dynamic pricing through stable digital currencies by designing two supply chains as shown in Figures~\ref{daicoin.eps}-\ref{supplychainfinance}. In the next section, we will establish general dynamic model with resource pricing decisions that can be used for handling these two systems.

Stable digital currency is a digital token used in digital informational and data network systems.
\begin{figure}[tbh]
%\centerline{\epsfxsize=7.2in\epsfbox{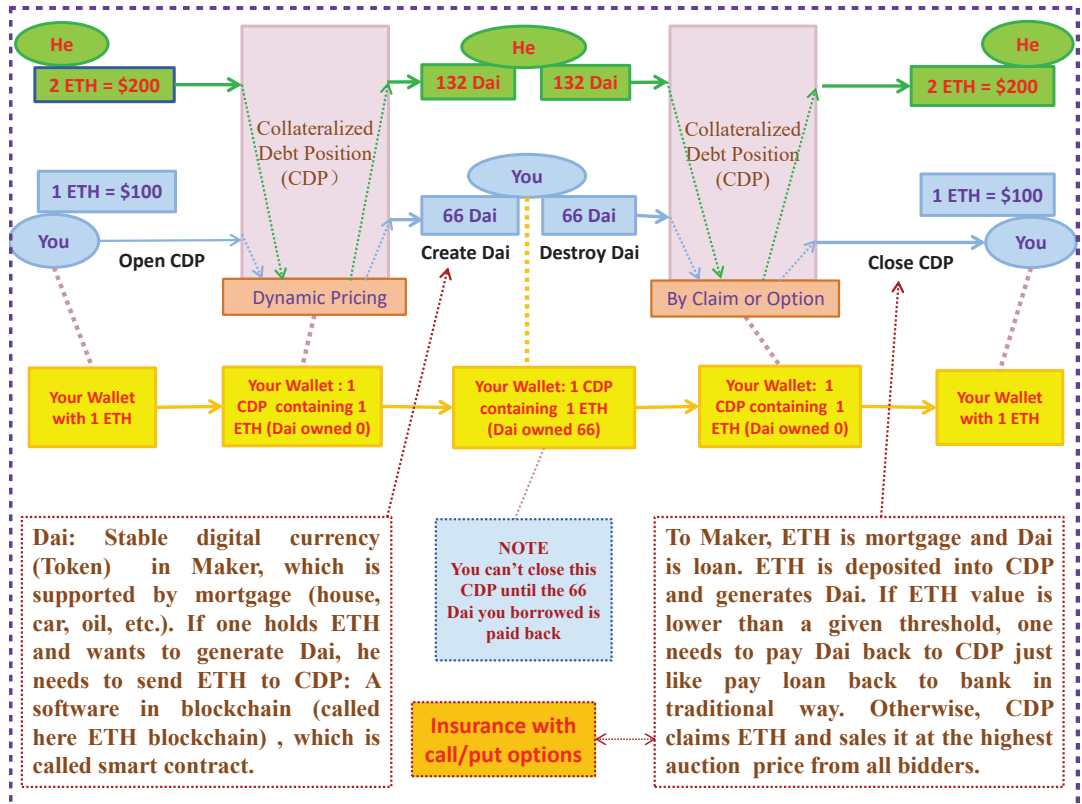}}
\centerline{\epsfxsize=5.6in\epsfbox{daicoininsur.eps}}
\caption{\footnotesize A simple supply chain with finance transactions via DaiCoin (abbreviated as Dai: the first ever designed stable digital currency in the year of 2009). In this figure, the transaction flow charts through Maker for customers He (with green flow chart) and You (with blue flow chart) is presented. The Maker is a blockchain based software management system and ETH means Ethereum.}
\label{daicoin.eps}
\end{figure}
Like a traditional token used in a Casino or a transportation payment system, it is endowed with real value. Furthermore, it can be traced back to the optimal pricing of bits (or ports) in telecommunication managements and admission controls through token buffers in communication networks (see, e.g., Dai~\cite{dai:optrat}, Elwalid and Mitra~\cite{elwmit:anades}). More precisely, around the mid and late 90's, Dai developed the first linked (distributed) data base (currently called blockchain) based optimization algorithms to price and mine bits (or ports) (that can be considered as coins) for 5E (the fifth generation of electronic switch) and many other telecom equipments with capacity constraints and U.S. FCC (Federal Communication Commission) tariff regulations. The purpose of the developments is to help the business leader group in AT$\&$T Bell Labs (now Nokia Bell Labs) to make investment decisions and to aid marketing/sale units to design effective Bell Labs network solutions with the customers' required performance. Along this line, Nakomoto~\cite{nak:bitpee} extended the concept of bit (or port) to the bitcoin in the year of 2008 and Buterin~\cite{but:ethnex} further enhanced this concept to Ethereum in the year of 2013. Note that, for both the bitcoin and the Ethereum, they are still not real stable digital currencies. However, during this evolvement, a claimed Wei Dai (see, e.g., Dai and Nakomoto~\cite{dainak:daisat} and Maker~\cite{mak:looahe}) made his effort to endow the Ethereum with real value and invented DaiCoin (see, e.g., a generalized version of the DaiCoin system in  Figure~\ref{daicoin.eps}, which we designed for the current study. Note that, the claim (or option) function in Figure~\ref{daicoin.eps} is the identical function if the destroying value is 66 for customer You and 132 for customer He). It is worth to point out that the first lawful stable digital currency along this development is the so-called OilCoin (see, e.g., Dai~\cite{dai:plamod}). Since then, this concept and lawful implementations are becoming more and more popular with the emergence of Libra (or digital US dollar), China Central Bank Blockchain Coin (or digital Chinese RMB (Ren Ming Bi)), and European Central Bank Digital Currency as in Figure~\ref{blockchain-fintech} with the targeted supply chain finance service in Figure~\ref{supplychainfinance}.
\begin{figure}[tbh]
%\centerline{\epsfxsize=7.0in\epsfbox{SupplyChainFinance.eps}}
\centerline{\epsfxsize=5.6in\epsfbox{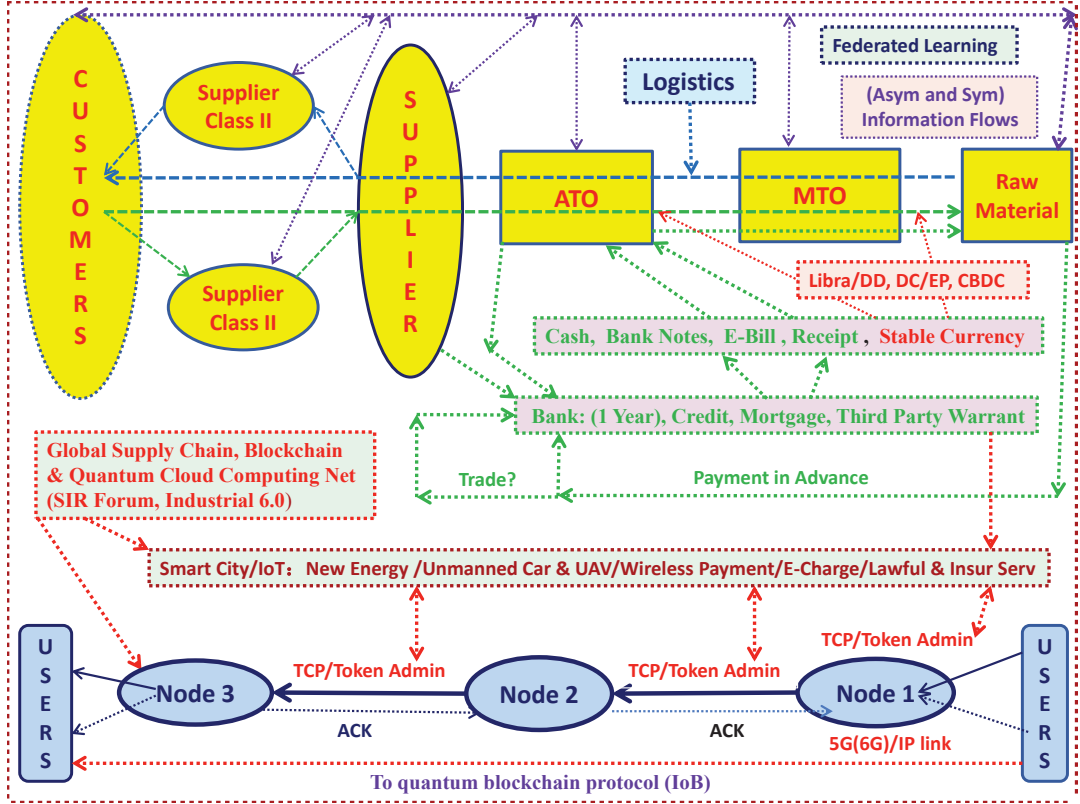}}
\caption{\footnotesize A generalized supply chain with finance transactions via stable digital currencies. In this figure, ATO means assemble to order, MTO means make to order, DD means digital dollar, DC/EP means (China Central Bank) digital currency/electrnic payment, CBDC means (European) Central Bank digital currency, TCP means transmission control protocol, IoB means Internet of Blockchains, Asym and sym mean asymmetry and symmetry respectively.}
\label{supplychainfinance}
\end{figure}

More precisely, this supply chain finance service with additional service lead time involvements can be considered as a generalized online digital payment system extended from the one in Figure~\ref{daicoin.eps}. Note that, a conventional supply chain usually consists of 4 typical service stages to make goods eventually delivered to customers: raw material procurements, make to order (MTO), assemble to order (ATO), and agent sales as designed in the upper-half of Figure~\ref{supplychainfinance} where agents are further classified into two levels of suppliers. Usually, during the procurement and  service stages via our FinTech platform in Figure~\ref{blockchain-fintech}, the cash can not be paid until the delivery of procured products. Thus, the bank notes, e-bill, receipt, etc. through credit, mortgage, and the third party warrant as shown in the middle of Figure~\ref{supplychainfinance} are widely used in real-world practice. To improve the efficiency and security of this type of payments, the stable digital currency such as Libra/DD, DC/EP, and CBDC as designed in Figure~\ref{supplychainfinance} is a suitable choice. Furthermore, as shown in the upper-right corner of Figure~\ref{supplychainfinance}, data information among companies can be asymmetric or symmetric. Frequently, they are not exchangeable. Thus, we will develop a dynamic resource pricing model to solve this problem in the next section, which is a new analytic modeling study to the hot topic on edge-computing and federated learning (see, e.g., Abbas {\it et al.}~\cite{abbzha:mobedg} and Yang {\it et al.}~\cite{yanliu:fedmac}). From the lower-half design in Figure~\ref{supplychainfinance}, we can see that our supply chain system can be mapped into and interact with a information system through wireless 5G/6G network or wireline IP network. Then, many online payments and transactions with lawful services can be handled via our blockchain FinTech system as shown in the middle of Figure~\ref{supplychainfinance}.

\section{System model formulation for pricing stable digital currency}\label{fodpsdc}

In this section, we first introduce the concept of stable digital currency and its evolution. Then, we present the required model primitives for dynamic pricing of stable digital currency together with resource scheduling.

\subsection{Physical system model with quantum primitives}\label{modf}

In this subsection, we present our physical quantum service model with dynamic resource pricing capability and quantum particle arrival flows. Our system model can be used to serve the online payments and transactions with stable digital currencies as proposed in Figures~\ref{daicoin.eps}-\ref{supplychainfinance}. The ``quantum particle flow" is a unified terminology that can be used to model various data information traffics such as the vector-valued data stream with the requirement of voice and image synchronization in a video conference system or the general vector-valued data stream with multiple characteristic indices in statistics, healthcare, economics, quantum computing and quantum robot (see, e.g., Skianis {\it et al.}~\cite{skikon:measta}, Dai~\cite{dai:plamod,dai:quacom},  Toffano and Dubois~\cite{tofdub:quaeig}). More precisely, we will use ``quantum particle flows" to present the Ethereum and cash supply flows, quantum qubit data packet flows, customer and product supply/demand flows appeared in physical banking and insurance services, communication, blockchain and quantum cloud-computing services, supply chain systems, etc. Corresponding to the unified quantum particle flows, the service platform can also be unified as a generalized (IoB based) IoT system or a general FinTech platform model. It owns $V$ number of service pools associated with a set of positive integers ${\cal V}\equiv\{1,...,V\}$ and owns $J$ number of queues for $J$-parallel users corresponding to a set of positive integers ${\cal J}\equiv\{1,...,J\}$). Furthermore, we assume that the buffer storage in each queue is nonnegative. Each pool owns $J_{v}$ number of flexible quantum-computer based parallel-servers with $v$ belonging to a positive integer set ${\cal V}$. Let the prime denote the transpose of a vector or a matrix. Then, associated with the queues, there is a $J$-dimensional quantum particle flow arrival process $A=\{A(t)=(A_{1}(t),...,A_{J}(t))',t\geq 0\}$ and it is called a quantum data packet arrival process. In this situation, $A_{j}(t)$ for each $j\in{\cal J}$, $t\geq 0$, and some positive integer $n\in\{1,2,...\}$ is the number of $n$-qubit data packets that arrive at the $j$th queue during time interval $(0,t]$. In addition, in a real world service system such as a banking service or a supply chain system, the associated input ethereum/cash flows and supply/demand processes can be digitalized and mapped into the $n$-qubit data packet based framework. The size of a quantum data packet is a random number $\zeta\in\{1,2,...\}$. In other words, we can present each quantum data packet by a finite sequence of $n$-qubits $\{|\Phi_{1}\rangle,...,|\Phi_{\zeta}\rangle\}$ where $|\Phi_{i}\rangle$ for each $i\in\{1,2,...,\zeta\}$ is a $n$-qubit. In a real-world service system, this process is referred as a batch arrival process with random batch size $\zeta$.

Our unified blockchain and quantum cloud-computing based service platform system is assumed under an external random environment driven by a stationary FS-CTMC $\alpha=\{\alpha(t),t\in[0,\infty)\}$ with a finite state space ${\cal K}\equiv\{1,...,K\}$. The generator matrix of $\alpha(\cdot)$ is given by $G=(g_{il})$ with $i,l\in{\cal K}$, and
\begin{eqnarray}
&&g_{il}=\left\{\begin{array}{ll}
-\gamma(i)&\mbox{if}\;\;i=l,\\
\gamma(i)q_{il}&\mbox{if}\;\;i\neq l,
\end{array}
\right.
\elabel{generatorm}
\end{eqnarray}
where, $\gamma(i)$ is the holding rate for the continuous time chain staying in a state $i\in{\cal K}$ and $Q=(q_{il})$ is the corresponding transition matrix of its embedded discrete-time Markov chain (see, e.g., Resnick~\cite{res:advsto}). Moreover, define $\tau_{n}$ for each nonnegative integer $n\in\{0,1,...\}$ by
\begin{eqnarray}
&&\tau_{0}\equiv
0,\;\;\tau_{n}\equiv\inf\{t>\tau_{n-1}:\alpha(t)\neq\alpha(t^{-})\}.
\elabel{markovjt}
\end{eqnarray}
To wit, $\tau_{n}$ is a random jump time of the continuous time Markov chain $\alpha(\cdot)$. As in Dai~\cite{dai:plamod}, we model the arrival process $A_{j}(\cdot)$ for each positive integer $j\in{\cal J}$ as a big data flow stream through an TSRRP. For convenience, the definition of an TSRRP is restated as follows.
\begin{definition}\label{dsrp}
A process $A_{j}(\cdot)$ with $j\in{\cal J}\equiv\{1,...,J\}$ is called an TSRRP if $A_{j}(\tau_{n}+\cdot)$ for each $n\in\{0,1,...\}$ is the counting process corresponding to a (conditional) delayed renewal reward process with arrival rate $\lambda_{j}(\alpha(\tau_{n}))$ and mean reward $m_{j}(\alpha(\tau_{n}))$ associated with finite squared coefficients of variations $\alpha^{2}_{j}(\alpha(\tau_{n}))$ and $\zeta^{2}_{j}(\alpha(\tau_{n}))$ during time interval $[\tau_{n},\tau_{n+1})$.
\end{definition}

Now, we let $\{u_{j}(k),k=1,2,...\}$ be the sequence of times between the arrivals of the $(k-1)$th and the $k$th reward batches of packets at the $j$th queue. The associated batch reward is given by $w_{j}(k)$ and all the $n$-qubit data packets arrived with it are indexed in certain successive order. Therefore, we can present the renewal counting process corresponding to the inter-arrival time sequence $\{u_{j}(k),k=1,2,...\}$ for each $j\in{\cal J}$ as follows,
\begin{eqnarray}
&&N_{j}(t)=\sup\left\{n\geq 0:\sum_{k=1}^{n}u_{j}(k)\leq t\right\}.
\elabel{nsjvc}
\end{eqnarray}
Thus, we can restate the definition of an TSRRP $A_{j}(\cdot)$ quantitatively through the expression,
\begin{eqnarray}
&&A_{j}(t)=\sum_{k=1}^{N_{j}(t)}w_{j}(k).
\elabel{ansjvc}
\end{eqnarray}
Each $n$-qubit data packet will first get service in the system and then leave it. The service is managed by a quantum blockchain.
%\begin{figure}[tbh]
%\centerline{\epsfxsize=6.1in\epsfbox{blockchainexample.eps}}
%%\centerline{\epsfxsize=3.5in\epsfbox{blockchainmanager.eps}}
%\caption{\small Quantum-Blockchain Evolution and Processing Chart}
%\label{blockchain}
%\end{figure}
In this blockchain, the service for a $n$-qubit data packet is composed of two parts: security checking and policy computation (or real data
payload transmission). After completing the service, the security information and the policy (or the transmission result) will be stored and copied to all the participating partner nodes for storage and in the meanwhile to produce nonce values and private keys. We call the service associated with the policy computation as a virtue big data service and the service associated with the data payload transmission as a real big data service. Moreover, we denote $\{v_{j}(k),k=1,2,...\}$ to be the sequence of successive arrived packet lengths at queue $j$, which is assumed to be a sequence of strictly positive i.i.d. random variables with average packet length $1/\mu_{j}\in(0,\infty)$ and squared coefficient of variation $\beta_{j}^{2}\in(0,\infty)$. In addition, we suppose that all the inter-arrival and service time processes are mutually (conditionally) independent when the environmental state is fixed. Associated with each $j\in{\cal J}$ and each nonnegative constant $h$, we employ $S_{j}(\cdot)$ to denote the renewal counting process corresponding to $\{v_{j}(k),k=1,2,...\}$. In other words,
\begin{eqnarray}
&&S_{j}(h)=\sup\left\{n\geq 0:\sum_{k=1}^{n}v_{j}(k)\leq h\right\}.
\elabel{sjvc}
\end{eqnarray}
%\subsection{{\textcolor{red}{Performance Measures}}}
%{\textcolor{red}{Every packet (or called job) will get service in
%the platform and then leave the system.}}
Define $Q_{j}(t)$ to be the $j$th queue length with $j\in{\cal J}$ at each time $t\in[0,\infty)$ and $D_{j}(t)$ to be the number of packet departures from the $j$th queue in $(0,t]$.
%{\textcolor{red}{(see, e.g., Figure~\ref{qpools}
%for such an example).}}
%\begin{figure}[tbh]
%\centerline{\epsfxsize=3.5in\epsfbox{smartgridpower.eps}}
%\caption{\small A game platform with parallel-queues and multiple
%cloud-computing service pools}
%\label{qpools}
%\end{figure}
Therefore, the queueing dynamics governing the evolving of the internal qubit data flow in and out within our unified service platform can be modeled by
\begin{eqnarray}
&&Q_{j}(t)=Q_{j}(0)+A_{j}(t)-D_{j}(t),
\elabel{queuelength}
\end{eqnarray}
where, each queue is assumed to have an infinite storage capacity to buffer real or virtue quantum data packets (jobs) arrived from a given user.

Note that, in a DaiCoin and blockchain based mortgage system as shown in Figure~\ref{daicoin.eps}, $Q_{j}(t)$ is the number of Ethereums available at time $t$. In this case, we need to dynamically determine how many Dais should be loaned to customer $j$ for each Ethereum at time $t$ according to the value of $Q(t)$. Similarly, in a banking system as shown in Figure~\ref{blockchain-fintech}, $Q_{j}(t)$ can be the number of loan demands waiting at time $t$. In this case, we need to determine what is the loan interest rate at time $t$ according to the value of $Q(t)$. Furthermore, in communication and quantum cloud-computing based service systems, we need to price the bit service ratio at time $t$ according to the value of $Q(t)$. In all, we need to dynamically price our service in a real-world system according to the evolving of $Q(t)$ with the evolution of time $t$. For convenience, we will use the unified terminology ``price $P_{j}(t)$" to denote the price (the number of Dais or interest ratio) associated with $Q_{j}(t)$ at time $t$. In economics, there are different pricing functions with respect to $Q(t)$ (see, e.g., Dai and Jiang~\cite{daijia:stoopt}). Here, we assume that $P_{j}(t)$ is a positive function in terms of $Q_{j}(t)$ and $\alpha(t)$, i.e.,
\begin{eqnarray}
&&P_{j}(t)=f_{j}(Q_{j}(t),\alpha(t)).
\elabel{pricefun}
\end{eqnarray}
In addition, we suppose that $f_{j}(\cdot,\cdot)$ in \eq{pricefun} is Lipschitz continuous with respect to $Q_{j}(t)$. Then, we can introduce a utility (or a hash) function with respect to the valued queue length $P_{j}(t)Q_{j}(t)$ for user $j\in{\cal J}$ at each service pool $v\in{\cal V}$ as follows,
\begin{eqnarray}
&&U_{vj}(P(t)Q(t),\Lambda(t))\;\;\mbox{with}\;\;P(t)Q(t)=(P_{1}(t)Q_{1}(t),...,P_{J}(t)Q_{J}(t)),
\elabel{utilityvj}
\end{eqnarray}
where, $P(t)=(P_{1}(t),...,P_{J}(t))$ and $\Lambda(t)=(\Lambda_{1}(t),...\Lambda_{J}(t))$. Moreover, $\Lambda_{j}(t)$ for each $t\in[0,\infty]$ and $j\in{\cal J}$ is the summation of all service rates allocated to the $j$th user at time $t$ from all possible pools and servers.

Now, we define $W(t)$ and $W_{j}(t)$ to be the (expected) total workload in the system at time $t$ and the one associated with user $j$ at time $t$, to wit,
\begin{eqnarray}
&&W(t)=\sum_{j=1}^{J}W_{j}(t),\;\;\;\;\;\;\;W_{j}(t)=\frac{Q_{j}(t)}{\mu_{j}}.
\elabel{wyte}
\end{eqnarray}
In the following study, we will use $W(t)$ and $Q(t)$ as performance measures, $f=(f_{1},...,f_{J})$ in \eq{pricefun} as pricing function, and $\{U_{vj},j\in{\cal J},v\in{\cal V}\}$ in \eq{utilityvj} as utility (or hash) functions to propose a joint dynamical pricing and rate scheduling policy $(P,\Lambda)$ with users' selection at each time point for different service pools and servers to all the users in order that the total workload $W(t)$ and its corresponding total cost are minimized. Here we note that the available resources in our current system are generally transformed into service rates although they can be interpreted as other forms, e.g., power in an MIMO wireless channel or in a quantum-computing $\&$ measurement channel. Furthermore, we assume that the available resources from different pools and servers can be flexibly allocated and shared between the system and users, i.e., the system operates under a concurrent resource occupancy service regime. Based on these facts, we can define $T_{j}(t)$ to be the cumulative amount of service given to the $j$th queue up to time $t$, i.e.,
\begin{eqnarray}
&&T_{j}(t)=\int_{0}^{t}\Lambda_{j}(P(s)Q(s),\alpha(s))ds,
\elabel{tjqalpha}
\end{eqnarray}
Here, we remark that, $\Lambda_{j}$ is given in a feedback control form and it depends on the current price $P(s)$, the current queue length $Q(s)$, and the system state $\alpha(s)$ at a given time $s$. Hence, if we let $S_{j}(t)$ be the total number of jobs (packets) that finishes service in the system by time $t$, we know that $D_{j}(t)=S_{j}(T_{j}(t))$.

\section{RDRS model for dynamic resource pricing and scheduling}\label{pmodeling}

TSRRPs in Definition~\ref{dsrp} can effectively model big data arrival streams. However, it is difficult to directly conduct the analysis of the associated physical queueing model in \eq{queuelength} or its related physical workload model in \eq{wyte} due to the non-Markovian characteristics of TSRRPs. Thus, in this paper, we will develop a scheme to establish the RDRS model corresponding to our newly designed game-competition based dynamic resource pricing and scheduling policy by considering our queueing system under the asymptotic regime, where it is heavily loaded (load balanced), i.e., under the so-called heavy traffic condition. Furthermore, we will prove the correctness of RDRS modeling via diffusion approximation while we will also show the effectiveness of the identified model for our newly proposed pricing and scheduling policy by presenting simulation case studies. The corresponding simulation results are displayed in Figures~\ref{3simexamplei}-\ref{3simexampleii} and Figure~\ref{simexamplei} and their interpretations are presented in Subsection~\ref{singlepoolexample}.
\begin{figure}[tbh]
%\centerline{\epsfxsize=9.0in\epsfbox{s-i15-2-07-Gp-5k.eps}}
\centerline{\epsfxsize=7.1in\epsfbox{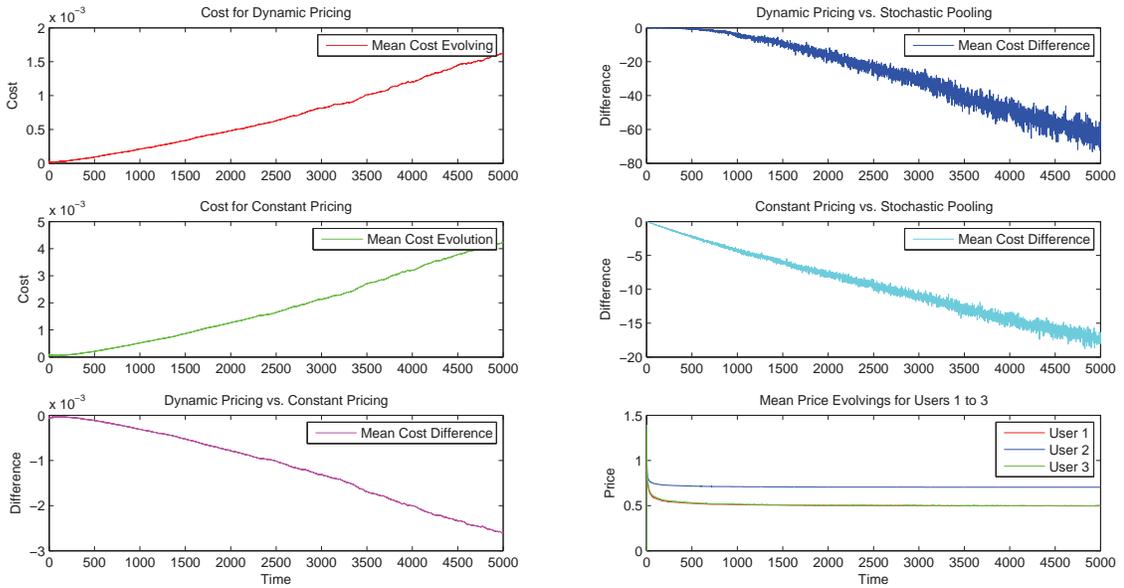}}
\caption{\footnotesize In this simulation, the number of simulation iterative times is $N=6000$, the simulation time interval is $[0,T]$ with $T=20$, which is further divided into $n=5000$ subintervals as explained in Subsection~\ref{singlepoolexample}. Other values of simulation parameters introduced in Definition~\ref{rdrsd} and Subsubsection~\ref{illex} are as follows: initialprice1=2.25, initialprice2=1.5, initialprice3=2.25, upperboundprice1=4, upperboundprice2=2, upperboundprice3=4, lowerboundprice1=0.49, lowerboundprice2=0.7, lowerboundprice3=0.49, queuepolicylowerbound1=0, queuepolicylowerbound2=0, queuepolicylowerbound3=0, $\lambda_{1}=10/3$, $\lambda_{2}=5$, $\lambda_{3}=10/3$, $m_{1}=3$, $m_{2}=1$, $m_{3}=3$, $\mu_{1}=1/10$, $\mu_{2}=1/20$, $\mu_{3}=1/10$, $\alpha_{1}=\sqrt{10}$, $\alpha_{2}=\sqrt{20}$, $\alpha_{3}=\sqrt{10}$, $\beta_{1}=\sqrt{10}$, $\beta_{2}=\sqrt{20}$, $\beta_{3}=\sqrt{10}$, $\zeta_{1}=1$, $\zeta_{2}=\sqrt{2}$, $\zeta_{3}=1$, $\rho_{1}=\rho_{2}=\rho_{3}=1000$, $\theta_{1}=-1$, $\theta_{2}=-1.2$, $\theta_{3}=-1$.}
\label{3simexamplei}
\end{figure}
\begin{figure}[tbh]
%\centerline{\epsfxsize=9.0in\epsfbox{s-N6000-I111-U111-L111-Gp.eps}}
\centerline{\epsfxsize=7.1in\epsfbox{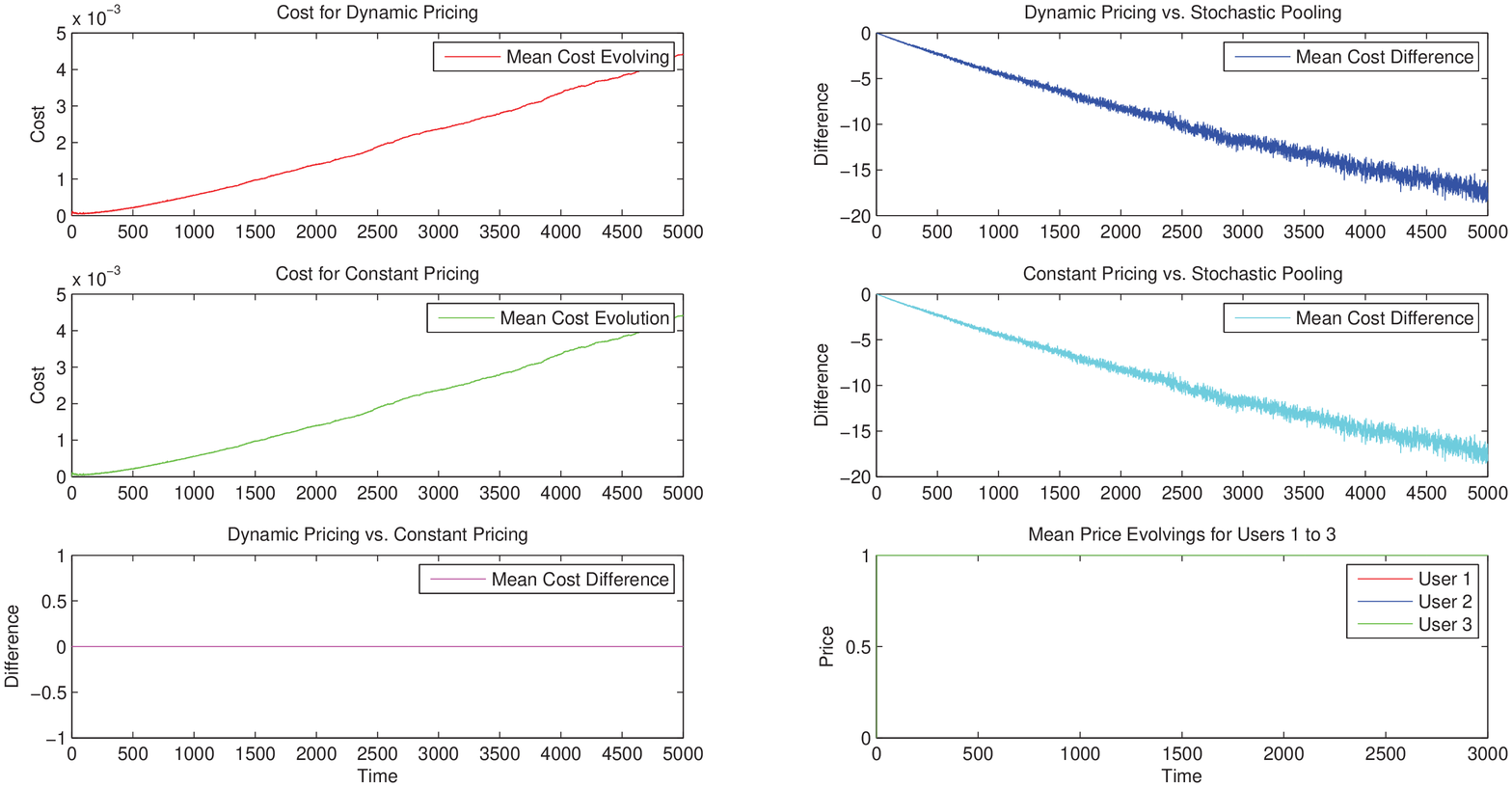}}
\caption{\footnotesize In this simulation, the number of simulation iterative times is $N=6000$, the simulation time interval is $[0,T]$ with $T=20$, which is further divided into $n=5000$ subintervals as explained in Subsection~\ref{singlepoolexample}. Other values of simulation parameters introduced in Definition~\ref{rdrsd} and Subsubsection~\ref{illex} are as follows: initialprice1=1, initialprice2=1, initialprice3=1, upperboundprice1=1, upperboundprice2=1, upperboundprice3=1, lowerboundprice1=1, lowerboundprice2=1, lowerboundprice3=1, queuepolicylowerbound1=0, queuepolicylowerbound2=0, queuepolicylowerbound3=0, $\lambda_{1}=10/3$, $\lambda_{2}=5$, $\lambda_{3}=10/3$, $m_{1}=3$, $m_{2}=1$, $m_{3}=3$, $\mu_{1}=1/10$, $\mu_{2}=1/20$, $\mu_{3}=1/10$, $\alpha_{1}=\sqrt{10}$, $\alpha_{2}=\sqrt{20}$, $\alpha_{3}=\sqrt{10}$, $\beta_{1}=\sqrt{10}$, $\beta_{2}=\sqrt{20}$, $\beta_{3}=\sqrt{10}$, $\zeta_{1}=1$, $\zeta_{2}=\sqrt{2}$, $\zeta_{3}=1$, $\rho_{1}=\rho_{2}=\rho_{3}=1000$, $\theta_{1}=-1$, $\theta_{2}=-1.2$, $\theta_{3}=-1$.}
\label{3simexampleii}
\end{figure}

%\subsection{Modeling Performance Measures as RDRSs}
\subsection{Main theorem via game-competition based smart contract}

In this subsection, we first present our main claim in terms of our RDRS modeling under a smart contract policy. Second, for convenience, we introduce the definition of RDRS model. More precisely, for each $t\geq 0$ and $j\in{\cal J}$, we introduce two sequences of diffusion-scaled processes $\hat{Q}^{r}(\cdot)$ and $\hat{W}^{r}(\cdot)$ by
%and $\hat{W}^{r}_{j}(\cdot)$ by
\begin{eqnarray}
&&\hat{Q}_{j}^{r}(t)\equiv\frac{Q_{j}^{r}(r^{2}t)}{r},\;\;\;\;\;\;\;\hat{W}^{r}(t)\equiv\frac{W^{r}(r^{2}t)}{r},
%\;\;\;\hat{W}^{r}_{j}(t)\equiv\frac{W^{r}_{j}(r^{2}t)}{r}
\elabel{rsqueue}
\end{eqnarray}
where, $\{r,r\in{\cal R}\}$ is supposed to be a strictly increasing sequence of positive real numbers and tends to infinity. Then, our main claim can be presented as follows.

%\begin{claim}\label{claimstate}
The sequence of $2$-tuple scaled processes in \eq{rsqueue} corresponding to a game-competition based dynamic resource pricing and scheduling policy with users' selection, which is designed in the subsequent subsection, converges jointly in distribution. More precisely, under the heavy traffic condition described in Section~\ref{rdrsm}, we have that
\begin{eqnarray}
&&(\hat{Q}^{r}(\cdot),\hat{W}^{r}(\cdot))
\Rightarrow(\hat{Q}(\cdot),\hat{W}(\cdot))
\;\;\;\mbox{along}\;\;\;r\in{\cal R},
\elabel{aI-qwyweakc}
\end{eqnarray}
where, $\hat{W}(\cdot)$ is presented by an RDRS model and $\hat{Q}(\cdot)$ is an asymptotic queue policy process with dynamic pricing globally over $[0,\infty)$ through a saddle point to zero-sum game-competition problem and a Pareto minimal-dual-cost Nash equilibrium point to a non-zero-sum game-competition problem.
%\end{claim}
\begin{definition}\label{rdrsd}
A $u$-dimensional stochastic process $\hat{Z}(\cdot)$ with $u\in{\cal J}$ is claimed as an RDRS with oblique reflection if it can be uniquely represented as
\begin{eqnarray}
&&\left\{\begin{array}{ll}
\hat{Z}(t)&=\;\;\;\hat{X}(t) +\int_{0}^{t}R(\alpha(s),s)d\hat{Y}(s)\geq 0,\\
%\elabel{mainmodelz}
d\hat{X}(t)&=\;\;\;b(\alpha(t),t)dt+\sigma^{E}(t)d\hat{H}^{E}(t)+\sigma^{S}(t)d\hat{H}^{S}(t).
%\elabel{hatzx}
\end{array}
\right.
\elabel{mainmodelz}
\end{eqnarray}
Furthermore, $b(\alpha(t),t)=(b_{1}(\alpha(t),t),...,b_{u}(\alpha(t),t)'$ is a $u$-dimensional vector, $\sigma^{E}(t)$ and $\sigma^{S}(t)$ are
$u\times J$ matrices, $R(\alpha(t),t)$ with $t\in R_{+}$ is a $u\times u$ matrix, and $(\hat{Z}(\cdot),\hat{Y}(\cdot))$ is a coupled a.s. continuous solution of \eq{mainmodelz} with the following properties for each $j\in\{1,...,u\}$,
\begin{eqnarray}
&\left\{\begin{array}{ll}
\hat{Y}_{j}(0)=0;\\
\mbox{Each component}\;\;\hat{Y}_{j}(\cdot)\;\;\mbox{of}\;\;
\hat{Y}(\cdot)=(\hat{Y}_{1}(\cdot),...,\hat{Y}_{u}(\cdot))'\;\;\mbox{is non-decreasing};\\
\mbox{Each component}\;\;\hat{Y}_{j}(\cdot)\;\;\mbox{can increase only at a time}\;\;t\in[0,\infty)\;\;\mbox{that}\;\;\hat{Z}_{j}(t)=0,\;\mbox{i.e.},\\
\;\;\;\;\;\;\;\;\;\;\;\;\;\;\;\;\;\;\;\;\;\;\;\;\;\;\;\;\;\;\;\;\;\;\;\;\;\;\;\;\;\;\;\;\;\;\;\;
\int_{0}^{\infty}\hat{Z}_{j}(t)d\hat{Y}_{j}(t)=0.
\end{array}
\right.
\nonumber
\end{eqnarray}
In addition, a solution to the RDRS in \eq{mainmodelz} is called a strong solution if it is in the pathwise sense and is called a weak solution if it is in the sense of distribution.
\end{definition}

In terms of the well-posedness of an RDRS, readers are referred to a general discussion in Dai~\cite{dai:unisys}. Furthermore, in Definition~\ref{rdrsd}, the stochastic processes $B^{E}(\cdot)$ and $B^{S}(\cdot)$ are respectively two $J$-dimensional standard Brownian motions, which are independent each other. For each state $i\in{\cal K}$ and a time $t\in[0,\infty)$, the nominal arrival rate vector $\lambda(i)$, the mean
reward vector $m(i)$ , the nominal throughput vector $\rho(i)$, and a constant parameter vector $\theta(i)$ are given as follows,
\begin{eqnarray}
&&\left\{\begin{array}{ll}
\lambda(i)&=\;\;\;(\lambda_{1}(i),...,\lambda_{J}(i))',\\
%\elabel{lambdae}\\
m(i)&=\;\;\;(m_{1}(i),...,m_{J}(i))',\\
%\elabel{mlambdae}\\
\rho(i)&=\;\;\;\left(\rho_{1}(i),...,\rho_{J}(i)\right)',\\
%\elabel{barvc}\\
\theta(i)&=\;\;\;(\theta_{1}(i),...,\theta_{J}(i))'.
%\elabel{vectorpar}
\end{array}
\right.
\elabel{lambdae}
\end{eqnarray}
The covariance matrices are given by
\begin{eqnarray}
&&\left\{\begin{array}{ll}
\Gamma^{E}(i)&=\;\;\;\left(\Gamma^{E}_{kl}(i)\right)_{J\times J}\\
%\elabel{covmatrixo}\\
&\equiv\;\;\;\mbox{diag}\left(\lambda_{1}(i)m_{1}^{2}(i)\zeta^{2}_{1}(i)
+\lambda_{1}(i)m_{1}(i)\alpha_{1}^{2},\right.\\
%\nonumber\\
&\;\;\;\left.\;\;\;\;...,\lambda_{J}(i)m_{J}^{2}(i)
\zeta^{2}_{J}(i)+\lambda_{J}(i)m_{J}(i)\alpha^{2}_{J}\right),\\
%\nonumber\\
\Gamma^{S}(i)&=\;\;\;\left(\Gamma^{S}_{kl}(i)\right)_{J\times J}\\
%\elabel{covmatrix}\\
&\equiv\;\;\;\mbox{diag}\left(\lambda_{1}(i)m_{1}(i)\beta_{1}^{2},...,
\lambda_{J}(i)m_{J}(i)\beta_{J}^{2}\right).
%\nonumber
\end{array}
\right.
\elabel{covmatrixo}
\end{eqnarray}
The It$\hat{o}$'s integrals with respect to the Brownian motions are defined as
\begin{eqnarray}
&&\left\{\begin{array}{ll}
\hat{H}^{e}(t)&=\;\;\;\left(\hat{H}^{e}_{1}(t)',...,\hat{H}^{e}_{J}(t)\right)'\;\;\mbox{with}\;\;e\in\{E,S\},\\
%\elabel{bbvector}\\
\hat{H}^{e}_{j}(t)&=\;\;\;\int_{0}^{t}\sqrt{\Gamma^{e}_{jj}(\alpha(s))}dB^{e}_{j}(s).
%\elabel{bbvectorI}
\end{array}
\right.
\elabel{bbvector}
\end{eqnarray}

\subsection{A 3-Stage dynamic pricing and scheduling policy}

In this subsection, we design a 3-stage users' selection, dynamic pricing, and rate scheduling policy through mixed zero-sum and non-zero-sum game-competitions myopically at each time point for the purpose as stated in the previous subsection. To be more illustrative, we begin with two policy examples.

\subsubsection{Two illustrative policy examples}\label{illex}

In this subsubsection, we present a 2-stage dynamic pricing and rate scheduling example and a 3-stage users' selection, dynamic pricing, and rate scheduling example based on a single-pool service system as shown in Figure~\ref{numexample}.
\begin{figure}[tbh]
%\centerline{\epsfxsize=7.4in\epsfbox{channelpricecap3U.eps}}
\centerline{\epsfxsize=5.6in\epsfbox{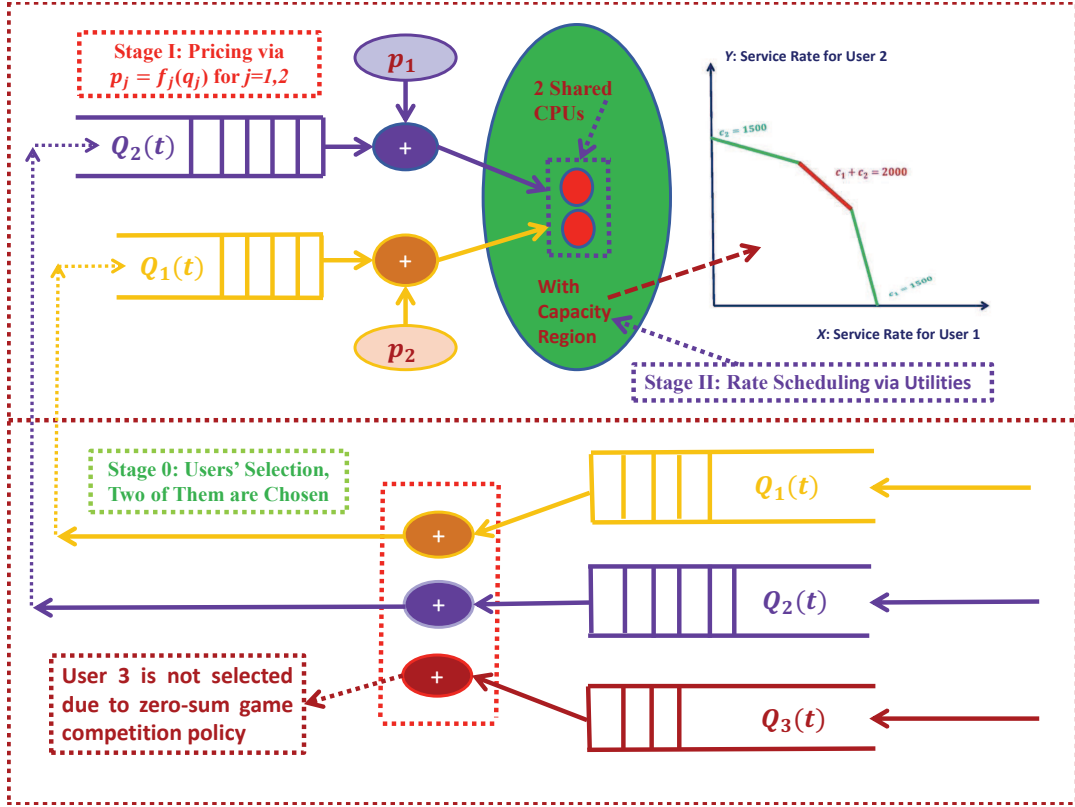}}
\caption{\footnotesize 2-stage dynamic pricing and rate scheduling for a single pool service system with 3-users}
\label{numexample}
%\centerline{\epsfxsize=2.0in\epsfbox{paretooptimal.eps}}
%\caption{\small An example of Pareto optimal Nash
%equilibrium policy}
\end{figure}
Hence, we will omit the associated pool index $v$.

The first example is corresponding to the upper graph in Figure~\ref{numexample}, which can be used to model the DaiCoin based digital payment system with two types of Ethereums (corresponding to two DaiCoins) as in Figure~\ref{daicoin.eps}. It can also be used to model an MIMO channel shared by two-users or a quantum computer with two eigenmodes. In this case, we are interested in the problem about how to price the two users and conduct the computing rate (i.e., power) resource allocation cooperatively inside a service system. More precisely, we take $V=1$ with $J=2$ and assume that the state space of the FS-CTMC $\alpha(t)$ defined in Subsection~\ref{modf} consists only of a single state (i.e., $\alpha(t)\equiv 1$ for all $t\in[0,\infty)$). In an MIMO wireless environment, this case is associated with the so-called pseudo static channels. Then, the capacity region denoted by ${\cal R}$ is supposed to be a non-degenerate convex one confined by five boundary lines including the two ones on $x$-axis and $y$-axis as shown in the upper-right graph of Figure~\ref{numexample}. The capacity upper bound of the region satisfies $c_{1}+c_{2}=2000$. This region is corresponding to a degenerate fixed MIMO wireless channel of the generally randomized one in Dai~\cite{dai:optrat}. For each price vector $p=(p_{1},p_{2})\in R_{+}^{2}=[0,\infty)\times[0,\infty)$ corresponding to the process $P(t)$ in \eq{pricefun} and the queue length vector $q=(q_{1},q_{2})\in R_{+}^{2}=[0,\infty)\times[0,\infty)$ corresponding to the process $Q(t)$ defined in \eq{queuelength} at a particular time point,
%we take the pricing functions in \eq{pricefun} for user 1 and user 2 to be the well-known power functions given respectively by
%\begin{eqnarray}
%&&p_{1}=f_{1}(q_{1})=\frac{1000}{q^{2}_{1}},\;\;\;\;\;\;p_{2}=f_{2}(q_{2})=\sqrt{\frac{6000}{q^{3}_{2}}}.
%\elabel{exactpricefun}
%\end{eqnarray}
we take the utility functions in terms of rate vector $c=(c_{1},c_{2})\in{\cal R}$ for user 1 and user 2 respectively by
\begin{eqnarray}
&&U_{1}(pq,c)=U_{1}(p_{1}q_{1},c_{1})=p_{1}q_{1}\ln(c_{1}),\;\;\;U_{2}(pq,c)=U_{2}(p_{2}q_{2},c_{2})=-\frac{(p_{2}q_{2})^{2}}{c^{2}_{2}},
\elabel{numutility}
\end{eqnarray}
where, $ln(\cdot)$ is the logarithm function with the base $e$. Note that, the utility functions $U_{1}$ and $U_{2}$ in \eq{numutility} are called proportionally fair and minimal potential delay allocations respectively, which are widely used in communication systems (see, e.g., Dai~\cite{dai:optrat}, Ye and Yao~\cite{yeyao:heatra}). Furthermore, in a (quantum) blockchain system, these utility functions can be considered as generalized hash functions to replace the currently used random number generators to generate partial nonce values and private keys, i.e., $((p_{1},p_{2}),(q_{1},q_{2}),(c_{1},c_{2}))$.

\begin{example}\label{example1}
For the upper graph case in Figure~\ref{numexample} and by the utility functions in \eq{numutility}, we can propose a 2-stage pricing and rate-scheduling policy at each time point $t\in[0,\infty)$ by a Pareto maximal-utility Nash equilibrium point to the following non-zero-sum game problem
\begin{eqnarray}
&&\max_{c\in{\cal R}}U_{j}(pq,c)\;\;\mbox{for each}\;\;j\in\{0,1,2\}\;\;\mbox{and a fixed}\;\;pq\in R_{+}^{2},
\elabel{adafn}
\end{eqnarray}
where, $U_{0}(pq,c)=U_{1}(pq,c)+U_{2}(pq,c)$. To wit, if $c^{*}=(c_{1}^{*},c_{2}^{*})$ is a solution to the game problem in \eq{adafn}, we have that
\begin{eqnarray}
&&\left\{\begin{array}{ll}
U_{0}(pq,c^{*})\geq U_{0}(pq,c),&\\
%\elabel{upolicyIn}\\
U_{1}(pq,c^{*})\geq U_{1}(pq,c^{*}_{-1})&\mbox{with}\;\;\;c^{*}_{-1}=(c_{1},c^{*}_{2}),\\
%\elabel{upolicyIIn}\\
U_{2}(pq,c^{*})\geq U_{2}(pq,c^{*}_{-2})&\mbox{with}\;\;\;c^{*}_{-2}=(c^{*}_{1},c_{2}).
%\elabel{upolicyIIIn}
\end{array}
\right.
\elabel{upolicyIn}
\end{eqnarray}
Furthermore, it follows from the inequalities in \eq{upolicyIn} that, if a game player's (i.e., a user's) rate service policy is unilaterally changed, his utility cannot be improved.
\end{example}

The second example is by adding Stage 0 for users' selection in the lower graph of Figure~\ref{numexample}. Comparing with the first case with $J=2$, we here consider a 3-user case (i.e., $J=3$) and add one more user selection layer. At each time point, we choose two of the three users for service according to a zero-sum game competition policy. When any two users $i,j\in\{1,2,3\}$ with $i\neq j$ are selected, they will be served based on a non-zero-sum game competition policy. The capacity upper bound of the corresponding capacity region satisfies $c_{i}+c_{j}=2000$ as in the first 2-user case. Furthermore, suppose that, at a particular time point, there is a price vector $p=(p_{1},p_{2},p_{3})\in R_{+}^{3}$ corresponding to the process $P(t)$ in \eq{pricefun} and a queue length vector $q=(q_{1},q_{2},q_{3})\in R_{+}^{3}$ corresponding to the process $Q(t)$ defined in \eq{queuelength}. Then, for each $(c_{i},c_{j})\in{\cal R}$, the corresponding utility functions are taken as in \eq{numutility} if $i,j\in\{1,2\}$. However, if $i=3$ or $j=3$, the corresponding utility function is taken to be the following one,
\begin{eqnarray}
&&U_{3}(pq,c)=U_{3}(p_{3}q_{3},c_{3})=p_{3}q_{3}\ln(c_{3}).
\elabel{anumutility}
\end{eqnarray}

\begin{example}\label{example2}
For the second case corresponding to both the upper and lower graphs in Figure~\ref{numexample} and by the utility functions in \eq{numutility} and \eq{anumutility}, we can design a 3-stage user-selection, pricing and rate-scheduling policy myopically at each time point $t\in[0,\infty)$, which involves two steps as follows. First, we choose two users for service by a saddle point policy via the solution to the zero-sum game problem,
\begin{eqnarray}
&&\;\;\;\;\max_{c\in{\cal R}}U_{0}(pq,c),\;\max_{c\in{\cal R}}U_{0j}(pq,c),\;\max_{c\in{\cal R}}\left(-U_{0j_{1}}(pq,c)\right),\;\max_{c\in{\cal R}}\left(-U_{0j_{2}}(pq,c)\right)
\elabel{adaf}
\end{eqnarray}
for each $j\in\{1,2,3\}$, $j_{1}\in\{1,2,3\}\setminus\{j\}$, $j_{2}\in\{1,2,3\}\setminus\{j,j_{1}\}$, and a fixed $pq\in R_{+}^{3}$, where,
\begin{eqnarray}
&&\left\{\begin{array}{ll}
U_{0}(pq,c)&=\;\;U_{1}(pq,c)+U_{2}(pq,c)+U_{3}(pq,c),\\
%\nonumber\\
U_{01}(pq,c)&=\;\;U_{1}(pq,c)+U_{2}(pq,c),\\
%\elabel{twoI}\\
U_{02}(pq,c)&=\;\;U_{1}(pq,c)+U_{3}(pq,c),\\
%\elabel{twoII}\\
U_{03}(pq,c)&=\;\;U_{2}(pq,c)+U_{3}(pq,c).
%\elabel{twoIII}
\end{array}
\right.
\elabel{twoI}
\end{eqnarray}
In other words, if $c^{*}=(c_{1}^{*},c_{2}^{*},c_{3}^{*})$ is a solution to the game problem in \eq{adaf}, and if
\begin{eqnarray}
&&c^{*}_{-j}=\left\{\begin{array}{ll}
(c_{j},c^{*}_{j_{1}},c^{*}_{j_{2}})&\mbox{if}\;\;j=1,\\
(c^{*}_{j_{1}},c_{j},c^{*}_{j_{2}})&\mbox{if}\;\;j=2,\\
(c^{*}_{j_{1}},c^{*}_{j_{2}},c_{j})&\mbox{if}\;\;j=3,
\end{array}
\right.
\nonumber
\end{eqnarray}
then, for a fixed $pq\in R_{+}^{3}$, we have that
\begin{eqnarray}
&&\left\{\begin{array}{ll}
U_{0}(pq,c^{*})&\geq\;\;U_{0}(pq,c),\\
%\elabel{upolicyI}\\
U_{0j}(pq,c^{*})&\geq\;\;U_{0j}(pq,c^{*}_{-j}),\\
%\elabel{upolicyII}\\
-U_{0j_{1}}(pq,c^{*})&\geq\;\;-U_{0j_{1}}(pq,c^{*}_{-j_{1}}),\\
%\elabel{upolicyIII}\\
-U_{0j_{2}}(pq,c^{*})&\geq\;\;-U_{0j_{2}}(pq,c^{*}_{-j_{2}}).
%\elabel{upolicyIV}
\end{array}
\right.
\elabel{upolicyI}
\end{eqnarray}
Second, when two users corresponding to the summation $U_{0j}=U_{k}+U_{l}$ for an index $j\in\{1,2,3\}$ with two associated indices $k,l\in\{1,2,3\}$ as in one of \eq{twoI} are selected, we can propose a 2-stage pricing and rate-scheduling policy at each time point by a Pareto maximal-utility Nash equilibrium point to the non-zero-sum game problem for a fixed $pq\in R_{+}^{3}$,
\begin{eqnarray}
&&\max_{c\in{\cal R}}U_{0j}(pq,c),\;\;\max_{c\in{\cal R}}U_{k}(pq,c),\;\;\max_{c\in{\cal R}}U_{l}(pq,c).
\elabel{aadafn}
\end{eqnarray}
To wit, if $c^{*}=(c_{k}^{*},c_{l}^{*})$ is a solution to the game problem in \eq{aadafn}, we have that
\begin{eqnarray}
&&\left\{\begin{array}{ll}
U_{0j}(pq,c^{*})\geq U_{0j}(pq,c),&\\
%\elabel{aupolicyIn}\\
U_{k}(pq,c^{*})\geq U_{1}(pq,c^{*}_{-k})&\mbox{with}\;\;\;c^{*}_{-k}=(c_{k},c^{*}_{l}),\\
%\elabel{aupolicyIIn}\\
U_{l}(pq,c^{*})\geq U_{l}(pq,c^{*}_{-l})&\mbox{with}\;\;\;c^{*}_{-l}=(c^{*}_{k},c_{l}).
%\elabel{aupolicyIIIn}
\end{array}
\right.
\elabel{aupolicyIn}
\end{eqnarray}
\end{example}

\subsubsection{General service capacity region}

In our quantum cloud-computing based IoB network system, the jobs in the $j$th queue for each $j\in{\cal J}$ may be served at the same time by a random but at most $V_{j}$ ($\leq V$) number of service pools corresponding to selected utility (hash) functions at a particular time point. With this simultaneous service mechanism, the total service rate for the $j$th queue at the time point is the summation of the rates from all the pools possibly to serve the $j$th queue. More precisely, we index these pools by a subset ${\cal V}(j)$ of the set ${\cal V}$ as follows,
\begin{eqnarray}
&&{\cal V}(j)\equiv\Big\{v_{1j},...,v_{V_{j}j}\Big\}\subseteq{\cal V},
\elabel{vindex}
\end{eqnarray}
where, $v_{lj}$ with $l\in\{1,...,V_{j}\}$ denotes the $v_{lj}$th pool in ${\cal V}(j)$. In the same way, a pool denoted by $v\in{\cal V}$ can possibly serve at most $J_{v}$ number of job classes represented by a subset ${\cal J}(v)$ of the set ${\cal J}$, i.e.,
\begin{eqnarray}
&&{\cal J}(v)\equiv\Big\{j_{v1},...,j_{vJ_{v}}\Big\}\subseteq{\cal J},
\elabel{jindex}
\end{eqnarray}
where, $j_{vl}$ with $l\in\{1,...,J_{v}\}$ indexes the $j_{vl}$th job class in ${\cal J}(v)$. In the pool $v$, there are $J_{v}$ number of flexible parallel-servers with rate allocation vector
\begin{eqnarray}
&&c_{v\cdot}(t)=(c_{j_{v1}}(t),...,c_{j_{vJ_{v}}}(t))',
\elabel{ratevector}
\end{eqnarray}
where, $c_{j_{vl}}(t)$ with $l\in\{1,...,J_{v}\}$ is the assigned service rate to the $j_{vl}$th user at pool $v$ and time $t$. Similarly, corresponding to the $l\in\{1,...,J_{v}\}$, we will also denote the rate $c_{j_{vl}}(t)$ by $c_{vj}(t)$ for an index $j\in{\cal J}(v)$.

Note that, the vector in \eq{ratevector} takes values in a capacity region ${\cal R}_{v}(\alpha(t))$ driven by the FS-CTMC $\alpha=\{\alpha(t),t\in[0,\infty)\}$.
%\begin{figure}[tbh]
%\centerline{\epsfxsize=3.0in\epsfbox{threeregion.eps}}
%\caption{\small A 3-user capacity set in $R_{+}^{3}$ for
%a cooperative MIMO wireless channel}
%\label{threeuserregion}
%\end{figure}
%%removed on 02/10/2020
%\begin{figure}[tbh]
%\centerline{\epsfxsize=7.0in\epsfbox{cpu-capacity.eps}}
%\caption{\footnotesize A 3-user capacity set in $R_{+}^{3}$ for a cooperative
%MIMO wireless channel in the left graph; A degenerate 3-user capacity set
%in $R_{+}^{3}$ for a cloud-processors-sharing system in the right graph;
%The upper graph is adapted from Figure~\ref{quantumchannel}.
%%{\textcolor{red}{In each of these two graphs, $x$ along $x$-axis denotes
%%the service rate for user 1, $y$ along $y$-axis denotes the service rate
%%for user 2, and $z$ along $z$-axis denotes the service rate for user 3.}}
%}
%\label{threeclouduserregion}
%\end{figure}.
For each given $i\in{\cal K}$ and $v\in{\cal V}$, the set ${\cal R}_{v}(i)$ is a convex region containing the origin and has $L_{v}$ $(>J_{v})$ boundary pieces (see, e.g.,  the upper-left graph in Figure~\ref{numexample}). In this region, every point is defined according to the associated users, i.e., $x=(x_{j_{v1}},...,x_{j_{vJ_{v}}})$. On the boundary of ${\cal R}_{v}(i)$ for each $i\in{\cal K}$, $J_{v}$ of them are $(J_{v}-1)$-dimensional linear facets along the coordinate axes. The other ones denoted by ${\cal O}_{v}(i)$ are located in the interior of $R^{J_{v}}_{+}$. It is called the {\it capacity surface} of ${\cal R}_{v}(i)$ and it has $B_{v}=L_{v}-J_{v}\;(>0)$ linear or smooth curved facets $h_{vk}(c_{v\cdot},i)$ on $R_{+}^{J_{v}}$ for $k\in{\cal U}_{v}\equiv\{1,2,...,B_{v}\}$, i.e.,
\begin{eqnarray}
&&{\cal R}_{v}(i)\equiv\left\{c_{v\cdot}\in R_{+}^{J_{v}}:\;h_{vk}(c_{v\cdot},i)\leq 0,\;k\in{\cal U}_{v}\right\}.
\elabel{capsur}
\end{eqnarray}
Furthermore, if we define $C_{U_{v}}(i)$ to be the sum capacity upper bound for ${\cal R}_{v}(i)$, the facet in the center of ${\cal O}_{v}(i)$ is linear and is assumed to be a non-degenerate $(J_{v}-1)$-dimensional region. More precisely, it can be represented by
\begin{eqnarray}
&&h_{vk_{U_{v}}}(c_{v\cdot},i)=\sum_{j\in{\cal J}(v)}c_{j}-C_{U_{v}}(i),
\elabel{hmiddle}
\end{eqnarray}
where, $k_{U_{v}}\in{\cal U}_{v}$ is the index corresponding to $C_{U_{v}}(i)$. In addition, we suppose that any one of the $J_{v}$ linear facets along the coordinate axes forms an $(J_{v}-1)$-user capacity region associated with a particular group of $J_{v}-1$ users if the queue corresponding to the other user is empty. In the same manner, we can provide an interpretation for the $(J_{v}-l)$-user capacity region for each $l\in\{2,...,J_{v}-1\}$.

Concerning the allocation of the service resources over the capacity regions to different users, we adopt the so-called head of line service discipline. Equivalently, the service goes to the packet at the head of the line for a serving queue where packets are stored in the order of their arrivals. The service rates are determined by a utility (or hash) function of the environmental state, the price for each user, and the number of packets in each of the queues. More precisely, for each state $i\in{\cal K}$, a price vector $p=(p_{1},...,p_{J})$, and a queue length vector $q=(q_{1},...,q_{J})'$, we define $\Lambda_{\cdot j}(pq,i)$ with $j\in{\cal J}$ to be the rate vector (in qubits/ps) of serving the $j$th queue at all its possible service pools, i.e.,
\begin{eqnarray}
&&\Lambda_{\cdot j}(pq,i)=c^{{\cal Q}(pq)}_{\cdot j}(i)=(c^{{\cal Q}(pq)}_{v_{1j}}(i),...,c^{{\cal Q}(pq)}_{v_{V_{j}j}}(i)),
\elabel{conventionI}
\end{eqnarray}
where,
\begin{eqnarray}
&&{\cal Q}(pq)\equiv\{j\in{\cal J},q_{j}=0\}.
\elabel{qindex}
\end{eqnarray}
Furthermore, let $\Lambda_{v\cdot}(pq,i)$ for each $v\in{\cal V}$ be the rate vector for all the users possibly served at service pool $v$, i.e.,
\begin{eqnarray}
&&\Lambda_{v\cdot}(pq,i)=c^{{\cal Q}(pq)}_{v\cdot}(i)=(c^{{\cal Q}(pq)}_{j_{v1}}(i),...,c^{{\cal Q}(pq)}_{j_{vJ_{v}}}(i)).
\elabel{conventionII}
\end{eqnarray}
Thus, $c^{{\cal Q}(pq)}_{v_{lj}}(i)=c^{{\cal Q}(pq)}_{j}(i)$ if the pool index $v_{lj}\in{\cal V}(j)$ for an integer $l\in\{1,...,V_{j}\}$ with $j\in{\cal J}$ while the total rate used in \eq{tjqalpha} can be represented by
\begin{eqnarray}
&&\Lambda_{j}(P(s)Q(s),\alpha(s))=\sum_{v\in{\cal V}(j)}c_{vj}^{{\cal Q}(P(s)Q(s))}(\alpha(s)).
\elabel{srates}
\end{eqnarray}
In the end, we impose the convention that an empty queue should not be served. Then, for each $v\in{\cal V}$ and ${\cal Q}\subseteq{\cal J}$ (e.g., a set as given by \eq{qindex}), we can define
\begin{eqnarray}
c^{{\cal Q}}_{j_{vl}}(i)&\equiv&\left\{\begin{array}{ll}
=0&\mbox{if}\;\;j_{vl}\in{\cal Q}\;\;\mbox{with}\;\;l\in\{1,...,J_{v}\},\\
>0&\mbox{if}\;\;j_{vl}\notin {\cal Q}\;\;\mbox{with}\;\;l\in\{1,...,J_{v}\},
\end{array}
\right.
\elabel{cqqvj}\\
\;\;\;\;\;\;\;c^{{\cal Q}}_{vj}(i)&\equiv&c^{{\cal Q}}_{j_{vl}}(i)\;\;\mbox{for some}\;\;j\in{\cal J}(v)\;\;\mbox{corresponding to each}\;\;
l\in\{1,...,J_{v}\},
\elabel{cqqvjI}\\
F^{v}_{{\cal Q}}(i)&\equiv&\bigg\{x\in{\cal R}_{v}(i):\;x_{j_{vl}}=0\;\;
\mbox{for all}\;\;j_{vl}\in{\cal Q}\;\;\mbox{with}\;\;
l\in\{1,...,J_{v}\}\bigg\}.
\elabel{fqindex}
\end{eqnarray}
Therefore, for all ${\cal Q}$ such that $\emptyset\subsetneqq{\cal Q}\subseteq{\cal J}(v)$ corresponding to each $v\in{\cal V}$ , if $c^{\cal Q}_{v\cdot}(i)$ is on the boundaries of the capacity region ${\cal R}_{v}(i)$, we have the following observation that
\begin{eqnarray}
&\left\{\begin{array}{ll}
\sum_{j\in{\cal J}(v)}c_{vj}^{\emptyset}(i)&\geq\;\;\;\sum_{j\in{\cal J}(v)}c_{vj}^{\cal Q}(i),\\
%\elabel{copproperty}\\
\sum_{j\in{\cal J}(v)\setminus{\cal Q}}c_{vj}^{\emptyset}(i)&\leq\;\;\;\sum_{j\in{\cal J}(v)\setminus{\cal Q}}c_{vj}^{\cal Q}(i),
%\elabel{coppropertyI}
\end{array}
\right.
\elabel{copproperty}
\end{eqnarray}
where, $c^{\emptyset}_{v\cdot}(i)\in{\cal O}_{v}(i)$ and $\emptyset$ denotes the empty set. Typical examples of our capacity region are referred to
the upper-left graph in Figure~\ref{numexample} for more details.

\subsubsection{A dynamic pricing and scheduling policy with users' selection}\label{apolicy}

For our purpose, we classify all the users into two types. More precisely, we first need to smartly choose the users to be served. In other words, at each time point and for each pool $v$, we intelligently select a set ${\cal M}(i,v)\equiv\{j_{v1}(i),...,j_{vM_{v}}(i)\}$ of users to get into services with $j_{vl}\in{\cal J}$ and $l\in\{1,...,M_{v}\}$ for a given positive integer number $M_{v}\leq J_{v}$. Among these chosen users, we need to conduct the dynamic pricing while realize optimal and fair resource allocation. Therefore, we design a strategy by mixing a saddle point and a static Pareto maximal-utility Nash equilibrium policy myopically at each time point $t$ to a mixed zero-sum and non-zero-sum game problem for each state $i\in{\cal K}$ and a given valued queue length vector $pq=(p_{1}q_{1},...,p_{J}q_{J})'$. Here we note that $p=(p_{1},...,p_{J})'$ is a given price vector and $q=(q_{1},...,q_{J})'$ is a given queue length vector such that $p_{j}=f_{j}(q_{j},i)$ as in \eq{pricefun} for each $j\in{\cal J}$ and $i\in{\cal K}$. The saddle point corresponds the users' selection while the Pareto optimality represents the full utilization of resources in the whole game system and the Nash equilibrium represents the fairness to all the chosen users. More exactly, in this game, there are $J$ users (players) associated with the $J$ queues. Each of them has his own utility function $U_{vj}(p_{j}q_{j},c_{vj})$ with $j\in{\cal J}(v)$ and $v\in{\cal V}(j)$. Every chosen user selects a policy to maximize his own utility function at each service pool $v$ while the summation of all the users' utility functions and the summation of the utility functions associated with the chosen users are also maximized. To wit, we can formulate a generalized users-selection, pricing, and resource-scheduling game problem by extending the ones in Examples~\ref{example1}-\ref{example2} as follows,
\begin{eqnarray}
&&\left\{\begin{array}{ll}
\max_{c_{v\cdot}\in F^{v}_{{\cal Q}}(i)}U_{00}(pq,c)&=\;\;\;U_{00}(pq,c^{*}(i)),\\
%\elabel{gameoptoo}\\
\max_{c_{v\cdot}\in F^{v}_{{\cal Q}}(i)}U_{0j}(pq,c)&=\;\;\;U_{0j}(pq,c^{*}(i)),\;j\in{\cal M}(i,v)\cap({\cal J}(v)\setminus{\cal Q}(q)),\\
%\elabel{gameoptoI}\\
\max_{c_{v\cdot}\in F^{v}_{{\cal Q}}(i)}(-U_{0j}(pq,c))&=\;\;\;-U_{0j}(pq,c^{*}(i)),\;j\in({\cal J}(v)\setminus{\cal Q}(q))\setminus{\cal M}(i,v))
%\elabel{gameoptoII}
\end{array}
\right.
\elabel{gameoptoo}
\end{eqnarray}
while we have that
\begin{eqnarray}
&&\left\{\begin{array}{ll}
\max_{c_{v\cdot}\in F^{v}_{{\cal Q}}(i),\;j\in{\cal M}(i,v)\bigcap\left({\cal J}(v)\setminus{\cal Q}(q)\right)}U_{vj}(pq,c)&=\;\;\;U_{vj}(pq,c^{*}(i)),\\
%\elabel{gameoptoIII}\\
\max_{c_{v\cdot}\in F^{v}_{{\cal Q}}(i),\;j\in\left({\cal J}(v)\setminus{\cal Q}(q)\right)\setminus{\cal M}(i,v)}(-U_{vj}(pq,c))&=\;\;\;-U_{vj}(pq,c^{*}(i)).
%\elabel{gameoptoIV}
\end{array}
\right.
\elabel{gameoptoIV}
\end{eqnarray}
Note that, the rate vector $c$ in \eq{gameoptoo}-\eq{gameoptoIV} is given by
\begin{eqnarray}
&&c=((c_{j_{11}},...,c_{j_{1J_{1}}}),...,(c_{j_{V1}},...,c_{j_{VJ_{V}}}))
\nonumber
\end{eqnarray}
and the utility functions used in \eq{gameoptoo}-\eq{gameoptoIV} are defined by
\begin{eqnarray}
&\left\{\begin{array}{ll}
U_{00}(pq,c)&=\;\;\;\sum_{j\in{\cal J}(v)\setminus{\cal Q}(q)}\sum_{v\in{\cal V}(j)}U_{vj}(p_{j}q_{j},c_{vj}),\\
%\elabel{gameoptoI}
U_{0j}(pq,c)&=\;\;\;\sum_{v\in{\cal V}(j)}U_{vj}(p_{j}q_{j},c_{vj})
\;\;\;\;\;\;\;\;\;\;\;\;\;\;\;\;\;\;\;\;\;\;\;
\mbox{for each}\;\;j\in{\cal J}(v)\setminus{\cal Q}(q),\\
%\elabel{gameoptoI}
U_{vj}(pq,c)&=\;\;\;U_{vj}(p_{j}q_{j},c_{vj})
\;\;\;\;\;\;\;\;\;\;\;\;\;\;\;\;\;
\mbox{for each}\;\;j\in{\cal J}(v)\setminus{\cal Q}(q)\;\;
\mbox{and}\;\;v\in{\cal V}(j).
\end{array}
\right.
\nonumber
\end{eqnarray}
Then, by extending the concepts of Nash equilibrium and Pareto optimality in Dai~\cite{dai:plamod,dai:quacom}, Nash~\cite{nas:equpoi} and Rosen~\cite{ros:exiuni}, we have the following definition concerning a mixed saddle point and static Pareto maximal-utility Nash equilibrium policy myopically at each particular time point for the users' selection, dynamic pricing, and resource scheduling as follows.
\begin{definition}\label{definitiona}
For each state $i\in{\cal K}$, a price vector $p\in R^{J}_{+}$, and a queue length vector $q\in R^{J}_{+}$ such that $\eq{pricefun}$ is satisfied, we call the rate vector
\begin{eqnarray}
&&c^{*}(i)\in F_{{\cal Q}(q)}(i)\equiv F^{1}_{{\cal Q}(q)}(i)\times...\times F^{V}_{{\cal Q}(q)}(i)
\nonumber
\end{eqnarray}
a mixed saddle-point and static Pareto maximal-utility Nash equilibrium policy to the mixed zero-sum and non-zero-sum game problem
in \eq{gameoptoo}-\eq{gameoptoIV} if, for each $j\in{\cal J}(v)\setminus{\cal Q}(q)$ and any given $c(i)\in F_{{\cal Q}(q)}(i)$, the following facts are true,
\begin{eqnarray}
&\left\{\begin{array}{ll}
U_{00}(pq,c^{*}(i))&\geq\;\;\;U_{00}(pq,c(i)),\\
%\elabel{nequilibrium}\\
%U_{0j}(q,c^{*}(i))&\geq& U_{0j}(q,c^{*}_{\cdot-j}(i)),
%\elabel{nequilibriumI}\\
U_{vj}(pq,c^{*}(i))&\geq\;\;\;U_{vj}(q,c^{*}_{\cdot-j}(i))\;\;\;\mbox{if}\;\;j\in{\cal M}(i,v)\cap({\cal J}(v)\setminus{\cal Q}(q)),\;v\in\{0\}\cup{\cal V}(j),\\
%\elabel{nequilibriumII}\\
-U_{vj}(pq,c^{*}(i))&\geq\;\;\;-U_{vj}(pq,c^{*}_{\cdot-j}(i))\;\;\mbox{if}\;\;j\in({\cal J}(v)\setminus{\cal Q}(q))\setminus{\cal M}(i,v),\;v\in\{0\}\cup{\cal V}(j),\\
%\elabel{nequilibriumIV}\\
c^{*}_{\cdot -j}(i)&\equiv\;\;\;(c_{\cdot 1}^{*}(i),...,c^{*}_{\cdot j-1}(i),c_{\cdot j}(i),c^{*}_{\cdot j+1}(i),...,c^{*}_{\cdot J}(i)).
%\elabel{nequilibriumIII}
\end{array}
\right.
\nonumber
\end{eqnarray}
\end{definition}

\subsection{RDRS modeling under the policy}

Before stating our main theorem, we first introduce another concept of the so-called mixed saddle point and static Pareto minimal-dual-cost Nash equilibrium policy myopically at each given time point for a given price parameter $p\in R_{+}^{J}$. Then, based on the mixed policy, we can inversely obtain the price vector and determine the target rate vector. To do so, we formulate a mixed users-selection, pricing and minimal-dual-cost game problem associated with the mixed game problem in \eq{gameoptoo}-\eq{gameoptoIV}. More precisely, for a given $i\in{\cal K}$, a price parameter $p\in R_{+}^{J}$, a rate vector $c\in {\cal R}(i)\equiv{\cal R}_{1}(i)\times...\times {\cal R}_{V}(i)$, and a parameter $w\geq 0$, the mixed minimal-dual-cost problem can be presented as follows:
%\begin{eqnarray}
%&&\min_{p\in R^{J}_{+}}\left\{\min_{q\in R^{J}_{+}}C_{00}(pq,c)\right\},
%\elabel{costminpo}\\
%&&\min_{p_{j}\in R_{+},\;j\in{\cal M}(i,v)\bigcap\left({\cal C}(c)\bigcap{\cal J}(v)\right)}\left\{\min_{q_{j}\in R_{+},\;j\in{\cal %M}(i,v)\bigcap\left({\cal C}(c)\bigcap{\cal J}(v)\right)}C_{vj}(pq,c)\right\},
%\elabel{costminp}\\
%&&\min_{p_{j}\in R_{+},\;j\in\left({\cal C}(c)\bigcap{\cal J}(v)\right)\setminus{\cal M}(i,v)}\left\{\min_{q_{j}\in R_{+},\;j\in\left({\cal %C}(c)\bigcap{\cal J}(v)\right)\setminus{\cal M}(i,v)}(-C_{vj}(pq,c))\right\}
%\elabel{costminpI}
%\end{eqnarray}
\begin{eqnarray}
&&\left\{\begin{array}{ll}
\min_{q\in R^{J}_{+}}C_{00}(pq,c),\\
%\elabel{costminpo}\\
\min_{q_{j}\in R_{+},\;j\in{\cal M}(i,v)\bigcap\left({\cal C}(c)\bigcap{\cal J}(v)\right)}C_{vj}(pq,c),\\
%\elabel{costminp}\\
\min_{q_{j}\in R_{+},\;j\in\left({\cal C}(c)\bigcap{\cal J}(v)\right)\setminus{\cal M}(i,v)}(-C_{vj}(pq,c))
%\elabel{costminpI}
\end{array}
\right.
\elabel{costminpo}
\end{eqnarray}
subject to
\begin{eqnarray}
\sum_{j\in{{\cal M}(i,v)\bigcap\cal C}(c)}\frac{q_{j}}{\mu_{j}}&\geq& w,
\nonumber
%&&\;\;\;\;\;\;\;q_{j}\geq 0\;\;\;\;\mbox{for each}\;\;\;j\in{\cal J},
\end{eqnarray}
where, the cost function $C_{vj}(pq,c)$ for each $j\in{\cal J}(v)$ and $v\in\{0\}\cup{\cal V}(j)$ is defined by
\begin{eqnarray}
&\left\{\begin{array}{ll}
C_{00}(pq,c)&=\;\;\;\sum_{j\in{\cal C}(c)\bigcap{\cal J}(v)}\sum_{v\in{\cal V}(j)}C_{j}(p_{j}q_{j},c_{vj}),\\
%\elabel{vcost}\\
%\nonumber\\
%\nonumber\\
C_{0j}(pq,c)&=\;\;\;\sum_{v\in{\cal V}(j)}C_{vj}(p_{j}q_{j},c_{vj}),\\
%\elabel{vcost}\\
%\nonumber\\
%\nonumber\\
C_{vj}(pq,c)&=\;\;\;C_{vj}(p_{j}q_{j},c_{vj})=\frac{1}{\mu_{j}}\int_{0}^{q_{j}}\frac{\partial U_{vj}(p_{j}u,c_{vj})}{\partial c_{vj}}du\;\;\mbox{for}
\;\;j\in{\cal C}(c)\cap{\cal J}(v),\;v\in{\cal V}(j),
%\elabel{costf}
\end{array}
\right.
\nonumber
\end{eqnarray}
and ${\cal C}(c)$ is an index set associated with the non-zero rates and non-empty queues, i.e.,
\begin{eqnarray}
&&{\cal C}(c)\equiv\bigg\{j:c_{\cdot j}\neq 0\;\;\mbox{componentwise with}\;\;j\in{\cal J}\bigg\}.
\nonumber
\end{eqnarray}
In other words, if the environment is in state $i\in{\cal K}$, we try to find a queue state $q$ for a given $c\in{\cal R}(i)$, a price parameter vector $p\in R_{+}^{J}$, and a given parameter $w\geq 0$ such that the individual user's dual-costs and the total dual-cost over the system are all minimized at the same time while the (average) workload meets or exceeds $w$. Then, we have the following definitions.
%Similar to Definition~\ref{definitiona},
%we have the following definition concerning a Pareto minimal-dual-cost
%Nash equilibrium policy.}}
\begin{definition}\label{definitionb}
%Suppose that $(p^{*},q^{*})\in R^{J}_{+}\times R^{J}_{+}$ satisfies the Lipschitz condition in \eq{pricefun} and $p^{*}_{j}q^{*}_{j}=0$ if $j\in{\cal %J}\setminus {\cal C}(c)$. Then,
For each state $i\in{\cal K}$, a price vector $p\in R_{+}^{J}$, and a rate vector $c(i)\in{\cal R}(i)$, a queue length vector $q^{*}\in R^{J}_{+}$ is called a mixed saddle point and static Pareto minimal-dual-cost Nash equilibrium policy to the mixed zero-sum and non-zero-sum game problem in \eq{costminpo} if, for each $j\in{\cal C}(c)$, $v\in\{0\}\cup{\cal V}$, and any given $q\in R^{J}_{+}$ with $q_{j}=0$ when $j\in{\cal J}\setminus {\cal C}(c)$, we have that
\begin{eqnarray}
&&\left\{\begin{array}{ll}
C_{00}(pq^{*},c(i))&\leq\;\;\;C_{00}(pq,c(i)),\\
%\elabel{cnequilibrium}\\
C_{vj}(pq^{*},c(i))&\leq\;\;\;C_{vj}(pq^{*}_{-j},c(i))\;\;\mbox{if}\;\;j\in{\cal M}(i,v)\cap({\cal C}(c)\cap{\cal J}(v)),\\
%\elabel{cnequilibriumI}\\
-C_{vj}(pq^{*},c(i))&\leq\;\;\;-C_{vj}(pq^{*}_{-j},c(i))\;\;\mbox{if}\;\;j\in({\cal C}(c)\cap{\cal J}(v))\setminus{\cal M}(i,v),\\
%\elabel{cnequilibriumIo}\\
%p^{*}_{-j}&\equiv\;\;\;(p_{1}^{*},...,p^{*}_{j-1},p_{j},p^{*}_{j+1},...,p^{*}_{J}),\\
%\elabel{cnequilibriumIa}\\
q^{*}_{-j}&\equiv\;\;\;(q_{1}^{*},...,q^{*}_{j-1},q_{j},q^{*}_{j+1},...,q^{*}_{J}).
%\elabel{cnequilibriumI}
\end{array}
\right.
\elabel{cnequilibrium}
\end{eqnarray}
%\begin{eqnarray}
%&&\left\{\begin{array}{ll}
%C_{00}(p^{*}q^{*},c(i))&\leq\;\;\;C_{00}(pq,c(i)),\\
%%\elabel{cnequilibrium}\\
%C_{vj}(p^{*}q^{*},c(i))&\leq\;\;\;C_{vj}(p^{*}_{-j}q^{*}_{-j},c(i))\;\;\mbox{if}\;\;j\in{\cal M}(i,v)\cap({\cal C}(c)\cap{\cal J}(v)),\\
%%\elabel{cnequilibriumI}\\
%-C_{vj}(p^{*}q^{*},c(i))&\leq\;\;\;-C_{vj}(p^{*}_{-j}q^{*}_{-j},c(i))\;\;\mbox{if}\;\;j\in({\cal C}(c)\cap{\cal J}(v))\setminus{\cal M}(i,v),\\
%%\elabel{cnequilibriumIo}\\
%p^{*}_{-j}&\equiv\;\;\;(p_{1}^{*},...,p^{*}_{j-1},p_{j},p^{*}_{j+1},...,p^{*}_{J}),\\
%%\elabel{cnequilibriumIa}\\
%q^{*}_{-j}&\equiv\;\;\;(q_{1}^{*},...,q^{*}_{j-1},q_{j},q^{*}_{j+1},...,q^{*}_{J}).
%%\elabel{cnequilibriumI}
%\end{array}
%\right.
%\elabel{cnequilibrium}
%\end{eqnarray}
\end{definition}

Note that, once we obtain the queue policy point $q^{*}$ with respect to the given price vector $p$ from Definition~\ref{definitionb}, we can inversely deduce the corresponding price policy vector $p$ in terms of $q^{*}$, i.e., $p=g(q^{*})$ as in \eq{pricefun}. This relationship can be used to design iterative algorithms in our numerical simulations. Furthermore, in Definition~\ref{definitionb}, we have used more strict concept of ``Pareto optimal Nash equilibrium point", this concept can be relaxed to ``Pareto optimal point" and the related theoretical discussion keeps true. In certain cases and when it is necessary, we can shift the Pareto optimal point to the Pareto optimal Nash equilibrium point by some mapping techniques.
\begin{definition}
Let $\hat{Q}^{r,(P,G)}(\cdot)$ and $\hat{W}^{r,(P,G)}(\cdot)$ be the diffusion-scaled queue length and workload processes respectively under an arbitrarily feasible dynamic pricing and rate scheduling policy $(P,G)$ satisfying the Lipschitz condition in \eq{pricefun}. A vector process $\hat{Q}(\cdot)$ is called a mixed asymptotic saddle point and Pareto minimal-dual-cost Nash equilibrium policy globally over the whole time horizon if, for any $t\geq 0$ and $v\in\{0\}\cup{\cal V}(j)$ with $j\in{\cal J}$, we have that
\begin{eqnarray}
&&\liminf_{r\rightarrow\infty}C_{00}(P(t)\hat{Q}^{r,(P,G)}(t),\rho_{j}(\alpha(t)))\geq C_{00}(P(t)\hat{Q}(t),\rho_{j}(\alpha(t))).
\elabel{optimaleqno}
\end{eqnarray}
Furthermore, for each $j\in{\cal M}(\alpha(t),v,t)\cap({\cal C}(c)\cap{\cal J}(v))$, we have that
\begin{eqnarray}
&&\liminf_{r\rightarrow\infty}C_{vj}(P(t)\hat{Q}_{-j}^{r,(P,G)}(t),\rho_{j}(\alpha(t)))\geq C_{vj}(P(t)\hat{Q}(t),\rho_{j}(\alpha(t))).
\elabel{optimaleqnI}
\end{eqnarray}
In addition, for each $j\in({\cal C}(c)\cap{\cal J}(v))\setminus{\cal M}(\alpha(t),v,t)$, we have that
\begin{eqnarray}
&&\liminf_{r\rightarrow\infty}\left(-C_{vj}(P(t)\hat{Q}_{-j}^{r,(P,G)}(t),\rho_{j}(\alpha(t)))\right)\geq -C_{vj}(P(t)\hat{Q}(t),\rho_{j}(\alpha(t))).
\elabel{optimaleqnIo}
\end{eqnarray}
Note that, in \eq{optimaleqnI}-\eq{optimaleqnIo} and for each $j\in{\cal J}$, we have that
\begin{eqnarray}
%P_{-j}(t)&=&(\hat{P}_{1}(t),...,\hat{P}_{j-1}(t),P_{j}(t),\hat{P}_{j+1}(t),...,\hat{P}_{J}(t)),
%\elabel{adaqho}\\
\hat{Q}_{-j}^{r,(P,G)}(t)&=&(\hat{Q}_{1}(t),...,\hat{Q}_{j-1}(t),\hat{Q}_{j}^{r,(P,G)}(t),\hat{Q}_{j+1}(t),...,\hat{Q}_{J}(t)).
\elabel{adaqh}
\end{eqnarray}
\end{definition}

Next, let $q^{*}(w,p,\rho(i))$ be the mixed saddle point and Pareto minimal-dual-cost Nash equilibrium policy to the game problem in \eq{costminpo} in terms of each given number $w\geq 0$, $p\in R_{+}^{J}$, and $i\in{\cal K}$ at a given time $t$. Furthermore, let $p(w,q^{*},\rho(i))$ denote its corresponding inverse price vector with respect to $q^{*}$ and construct price policy vector
\begin{eqnarray}
&&p^{*}(w,q^{*},\rho(i))=f(p(w,q^{*},\rho(i)))
\elabel{policyv}
\end{eqnarray}
such that the Lipschitz condition in \eq{pricefun} is satisfied. Then, our main theorem can be presented as follows.
\begin{theorem}\label{qdiff}
For the game-competition based users' selection, dynamic pricing, and scheduling policy determined by \eq{gameoptoo}-\eq{costminpo} and \eq{policyv} with $Q^{r}(0)=0$ and conditions \eq{heavytrafficc}-\eq{intercon} (that will be detailed in Section~\ref{rdrsm}), we have that
\begin{eqnarray}
&&(\hat{Q}^{r}(\cdot),\hat{W}^{r}(\cdot))\Rightarrow(\hat{Q}(\cdot),\hat{W}(\cdot))\;\;\;\mbox{along}\;\;\;r\in{\cal R},
\elabel{I-qwyweakc}
\end{eqnarray}
where, ``$\Rightarrow$" denotes ``convergence in distribution". Furthermore, the limit queue length $\hat{Q}(\cdot)$ and the total workload $\hat{W}(\cdot)$ have the relationship
\begin{eqnarray}
&&\left\{\begin{array}{ll}
\hat{Q}(t)&=\;\;\;q^{*}(\hat{W}(t),\hat{P}(t),\rho(\alpha(t))),\\
\hat{P}(t)&=\;\;\;p^{*}(\hat{W}(t),\hat{Q}(t),\rho(\alpha(t))),
\end{array}
\right.
\elabel{qqast}
\end{eqnarray}
where, $\hat{P}(\cdot)$ the inverse price vector process defined through \eq{policyv} and $\hat{W}(\cdot)$ is a $1$-dimensional RDRS in strong sense with
\begin{eqnarray}
&&\left\{\begin{array}{ll}
b(i,t)&=\;\;\;\sum_{j\in\bigcup_{v\in{\cal V}}{\cal M}(i,v,t)}\frac{\theta_{j}(i)}{\mu_{j}},\\
%b(i,t)&=&\;\frac{\theta_{j_{1}}(i)}{\mu_{j_{1}(1)}}+\cdot\cdot\cdot+\frac{\theta_{j_{M}}(i)}{\mu_{j_{M}}}\;\;\;
%\;\;\;\;\;\;\;\;\;\mbox{for}\;\;j_{l}\in{\cal M}(i,t), l\in\{1,...,M\},
%\elabel{twocI}\\
\sigma^{E}(t)&=\;\;\;\sigma^{S}(t)=\left(\hat{\sigma}_{1}(t),...,\hat{\sigma}_{J}(t)\right),\\
%\sigma^{E}(t)&=&\;\sigma^{S}(t)=\left(1/\mu_{j_{1}},...,1/\mu_{j_{M}}\right)£¬
%\elabel{twocII}\\
\hat{\sigma}_{j}(t)&=\;\;\;\left\{\begin{array}{ll}
\frac{1}{\mu_{j}}&\mbox{if}\;\;j\in\bigcup_{v\in{\cal V}}{\cal M}(i,v,t),\\
0&\mbox{otherwise,}
\end{array}
\right.\\
%\nonumber\\
%\sigma^{S}&=&\left(\frac{1}{\mu_{1}},...,\frac{1}{\mu_{J}}\right),
%\elabel{twocIII}\\
%\nonumber\\
R(i,t)&=\;\;\;1
%\elabel{twocIV}
\end{array}
\right.
\elabel{twocI}
\end{eqnarray}
for $t\in[0,\infty)$ and some constant $\theta_{j}(i)$ for each $j\in\bigcup_{v\in{\cal V}}{\cal M}(i,v,t)$. In addition, there is a common supporting probability space, under which and with probability one, the limit queue length $\hat{Q}(\cdot)$ is an asymptotic mixed saddle point and Pareto minimal-dual-cost Nash equilibrium policy globally over time interval $[0,\infty)$. Finally, the limit workload $\hat{W}(\cdot)$ is also asymptotic minimal in the sense that
\begin{eqnarray}
&&\liminf_{r\rightarrow\infty}\hat{W}^{r,(P,G)}(t)\geq\hat{W}(t).
\elabel{optimaleqn}
\end{eqnarray}
\end{theorem}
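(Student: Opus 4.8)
The plan is to establish \eq{I-qwyweakc} as a heavy-traffic diffusion limit, following the standard three-part program for stochastic processing networks but adapted to the regime-switching environment $\alpha(\cdot)$ and to the mixed game-competition policy: (i) a functional central limit theorem (FCLT) for the scaled primitive processes, (ii) a state space collapse (SSC) that pins the high-dimensional queue length onto the one-dimensional workload manifold determined by the minimal-dual-cost Nash equilibrium, and (iii) an application of the one-dimensional oblique reflection mapping together with the continuous-mapping and random-time-change theorems. First I would record the pre-limit dynamics: combining the queue equation \eq{queuelength} with the identity $D_j(t)=S_j(T_j(t))$ and the workload definition \eq{wyte}, and then applying the centering and diffusion scaling \eq{rsqueue}, I would write $\hat{W}^r$ as a centered netput process plus a nondecreasing reflection (cumulative idleness) term $\hat{Y}^r$ that can increase only when every served user's queue is empty.

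Next I would derive the FCLT for the primitives. Because each $A_j(\cdot)$ is a TSRRP (Definition~\ref{dsrp}) modulated by the FS-CTMC $\alpha$, I would, conditioned on the environment path, apply a renewal-reward/martingale FCLT to the arrival process $A_j$ and to the service counting process $S_j$ of \eq{sjvc}; this produces the two independent $J$-dimensional It\^o-integral terms $\hat{H}^E$, $\hat{H}^S$ of \eq{bbvector}, whose state-dependent diagonal covariances are exactly $\Gamma^E(\alpha(s))$ and $\Gamma^S(\alpha(s))$ from \eq{covmatrixo}. Under the heavy-traffic condition \eq{heavytrafficc}--\eq{intercon} the first-order (fluid) rates balance, so the centered netput converges to a regime-switching Brownian motion with the state-dependent drift $b(i,t)=\sum_{j}\theta_j(i)/\mu_j$ and the diffusion coefficients $\sigma^E(t)=\sigma^S(t)$ of \eq{twocI}, the sum being restricted to the selected users $\bigcup_{v\in{\cal V}}{\cal M}(i,v,t)$ since only their service completions decrement the workload at first order.

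The main obstacle is the state space collapse. I would show that under the mixed saddle-point/Pareto minimal-dual-cost policy of Definitions~\ref{definitiona}--\ref{definitionb} the scaled queue length is asymptotically pinned to $q^*(\hat{W}^r(t),\hat{P}^r(t),\rho(\alpha(t)))$, i.e.\ that the limit identity \eq{qqast} holds. The natural route is a Bramson-style multiplicative SSC: I would prove that the fluid limits of the shifted, rescaled queue-length process converge uniformly to the lifting $q^*$, using the optimality inequalities \eq{cnequilibrium} to identify $q^*$ as the unique minimizer of the dual cost along the workload constraint and the Lipschitz pricing relation $p=g(q^*)$ implied by \eq{pricefun} to keep the induced price-queue fixed point well defined. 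The delicate point is that the user-selection set ${\cal M}(i,v,t)$ produced by the zero-sum saddle step itself depends on the very state the queue is collapsing toward, so the collapse must be shown uniformly over the random environment and over the randomly selected pools and users; coupling the zero-sum selection stage with the non-zero-sum allocation stage so that this joint pinning holds on a faster time scale than the diffusive workload motion is where I expect the bulk of the technical effort to lie.

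Finally I would assemble the pieces. With the SSC in hand, the continuous-mapping theorem applied to the one-dimensional Skorohod reflection (here $R(i,t)=1$ in \eq{twocI}) yields convergence of $\hat{W}^r$ to the RDRS $\hat{W}$ of Definition~\ref{rdrsd} with the coefficients \eq{twocI}, and the collapse relation transfers this to the joint convergence \eq{I-qwyweakc} together with the limit identity \eq{qqast}; uniqueness of the weak limit follows from the well-posedness of the RDRS cited from Dai~\cite{dai:unisys}. The asymptotic optimality assertions then follow by comparing the limit against an arbitrary feasible policy $(P,G)$: passing the prelimit inequalities \eq{optimaleqno}--\eq{optimaleqnIo} to the limit and invoking that $q^*$ minimizes the dual cost subject to the workload constraint shows that no feasible policy can produce a smaller workload, which gives the lower bound \eq{optimaleqn} and the global Pareto minimal-dual-cost Nash property of $\hat{Q}(\cdot)$.
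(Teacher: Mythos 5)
Your proposal is correct and follows essentially the same route as the paper: the paper's own proof is exactly the heavy-traffic program you outline, namely a fluid-limit result (its Lemma~\ref{fluidlemma}) supplying the attractivity needed for state space collapse, followed by the FCLT-plus-reflection-mapping assembly and the policy-comparison argument for \eq{optimaleqn}, with the hydrodynamic collapse and final convergence steps imported from Dai~\cite{dai:optrat} and Dai~\cite{dai:plamod}. The only difference is one of emphasis: where you invoke Bramson-style multiplicative SSC as a black box, the paper's substantive written content is precisely the Lyapunov (dual-cost) fluid argument --- using concavity of the utilities and the Pareto--Nash property of the allocation to show the dual cost $\psi(\bar{P}(t)\bar{Q}(t),\alpha(t))$ is nonincreasing between environment jumps, hence $\bar{Q}(\cdot)\equiv 0$ when $\bar{Q}(0)=0$ --- which is the concrete mechanism that makes your SSC step go through, including across the user-selection switches you correctly flag as the delicate point.
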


The proof of Theorem~\ref{qdiff} will be given in Section~\ref{rdrsm}. To illustrate the efficiency of our model under the users' selection, dynamic pricing, and resource scheduling policy, we first present simulation case studies in the following subsection. More precisely, for a constant $T\in[0,\infty)$, we divide the interval $[0,T]$ equally into $n$ subintervals $\{[t_{i},t_{i+1}],i\in\{0,1,...,n-1\}\}$ with $t_{0}=0$, $t_{n}=T$, and $\Delta t_{i}=t_{i+1}-t_{i}=\frac{T}{n}$. Furthermore, let
\begin{eqnarray}
&&\Delta F(t_{i})\equiv F(t_{i})-F(t_{i-1})
\elabel{difpro}
\end{eqnarray}
for each process $F(\cdot)\in\{B^{E}(\cdot),B^{S}(\cdot),\hat{W}(\cdot),\hat{Y}(\cdot)\}$. Then, we can develop an iterative procedure
%similar to those as in Dai~\cite{dai:plamod,dai:quacom}
to simulate the RDRS model under our policy that is derived in Theorem~\ref{qdiff}.

\subsection{Simulation case studies via RDRS models}\label{singlepoolexample}

In this subsection, we conduct simulation case studies for Examples~\ref{example1}-\ref{example2} presented in Subsubsection~\ref{illex}. The main point of these simulation studies is to illustrate our policies proposed in the two examples outperform several policies in certain ways. These policies used for the purpose of comparisons include an existing constant pricing policy, an existing 2D-Queue policy, a newly designed randomly users' selection stochastic pooling policy, and an arbitrarily selected dynamic pricing policy. As mentioned in Subsubsection~\ref{illex}, Examples~\ref{example1}-\ref{example2} are corresponding to a single-pool system with two-users and three-users respectively. Thus, we will omit all the related pool index $v$. In an associated real-world system, the parameter vectors $p$ and $q$ in \eq{adafn} (or \eq{adaf}) are the randomly evolving pricing process $P(t)$ in \eq{pricefun} and the queue length process $Q(t)$ in \eq{queuelength}. Hence, it is our concern of this subsection about how to employ the RDRS performance model in Definition~\ref{rdrsd} to evaluate the usefulness of our proposed myopic users' selection, dynamic pricing, and scheduling policies globally over the whole time horizon $[0,\infty)$ for Examples~\ref{example1}-\ref{example2}. To interpret our numerical simulation implementations, we first identify the corresponding dual-cost functions {$C_{j}(q,c)$ as defined in \eq{costminpo} with $j\in\{1,2,3\}$ for the associated $U_{j}(q,c)$ given in \eq{numutility} and \eq{anumutility}. More precisely,
\begin{eqnarray}
&&\left\{\begin{array}{ll}
C_{1}(p_{1}q_{1},c_{1})&=\;\;\frac{1}{\mu_{1}}\int_{0}^{q_{1}}\frac{\partial U_{1}(p_{1}u,c_{1})}{\partial c_{1}}du\;\;=\;\;\frac{(p_{1}q_{1})^{2}}{2\mu_{1}c_{1}},\\
%\elabel{numcosto}\\
C_{2}(p_{2}q_{2},c_{2})&=\;\;\frac{1}{\mu_{2}}\int_{0}^{q_{2}}\frac{\partial U_{2}(p_{2}u,c_{2})}{\partial c_{2}}du\;\;=\;\;\frac{2(p_{2}q_{2})^{3}}{3\mu_{2}c^{3}_{2}},\\
%\elabel{numcost}\\
C_{3}(p_{3}q_{3},c_{3})&=\;\;\frac{1}{\mu_{3}}\int_{0}^{q_{3}}\frac{\partial U_{3}(p_{3}u,c_{3})}{\partial c_{3}}du\;\;=\;\;\frac{(p_{3}q_{3})^{2}}{2\mu_{3}c_{3}},
%\elabel{numcostI}
\end{array}
\right.
\elabel{numcosto}
\end{eqnarray}
where, $1/\mu_{j}$ for all $j\in\{1,2,3\}$ are average quantum packet lengths associated with the three users as explained just before \eq{sjvc}.

\subsubsection{The Simulation for Example~\ref{example1}}

Based on the first two dual-cost functions in \eq{numcosto}, we can formulate a corresponding 2-stage minimal dual-cost non-zero-sum game problem for a price parameter $p\in R_{+}^{2}$ as follows,
\begin{eqnarray}
&&\min_{q\in R_{+}^{2}}C_{j}(pq,c)\;\;\mbox{subject to}\;\;\frac{q_{1}}{\mu_{1}}+\frac{q_{2}}{\mu_{2}}\geq w
\elabel{adafcn}
\end{eqnarray}
for a fixed constant $w>0$, a fixed $c\in{\cal R}$, and all $j\in\{0,1,2\}$ with $C_{0}(pq,c)=C_{1}(pq,c)+C_{2}(pq,c)$. Since $C_{j}(p_{j}q_{j},c_{j})$ for each $j\in\{1,2\}$ is strictly increasing with respect to $p_{j}q_{j}$ (or simply $q_{j}$), a Pareto minimal dual-cost Nash equilibrium point to the problem in \eq{adafcn} must be located on the line where the equality of the constraint inequality is true (i.e., $q_{1}/\mu_{1}+q_{2}/\mu_{2}=w$). Thus, we know that
\begin{eqnarray}
&&q_{j}=\mu_{j}\left(w-\frac{q_{2-j+1}}{\mu_{2-j+1}}\right)\;\;\;\mbox{with}\;\;\;j\in\{1,2\}.
\elabel{qjji}
\end{eqnarray}
Hence, it follows from \eq{qjji} that
\begin{eqnarray}
&&\bar{f}(q_{1})\equiv\sum_{j=1}^{2}C_{j}(p_{j}q_{j},c_{j})
%=\frac{q^{2}_{1}}{2\mu_{1}c_{1}}+\frac{2\mu_{2}^{2}}{3c_{2}^{3}}\left(w-\frac{q_{1}}{\mu_{1}}\right)^{3}.
%\elabel{qjjiI}\\
%&=&\frac{q^{2}_{1}}{2(\mu_{1}/p_{1}^{2})c_{1}}+\frac{2(\mu_{2}/p_{2}^{3})^{2}}{3c_{2}^{3}}\left(w-\frac{q_{1}}{(\mu_{1}/p_{1}^{2})}\right)^{3}.
%\nonumber\\
=\frac{p_{1}^{2}q^{2}_{1}}{2\mu_{1}c_{1}}+\frac{2p_{2}^{3}\mu_{2}^{2}}{3c_{2}^{3}}\left(w-\frac{q_{1}}{\mu_{1}}\right)^{3}.
\elabel{qjjiI}
\end{eqnarray}
%\begin{eqnarray}
%&&\bar{f}(q_{1})\equiv\sum_{j=1}^{2}C_{j}(p_{j}q_{j},c_{j})
%%=\frac{q^{2}_{1}}{2\mu_{1}c_{1}}+\frac{2\mu_{2}^{2}}{3c_{2}^{3}}\left(w-\frac{q_{1}}{\mu_{1}}\right)^{3}.
%%\elabel{qjjiI}\\
%%&=&\frac{q^{2}_{1}}{2(\mu_{1}/p_{1}^{2})c_{1}}+\frac{2(\mu_{2}/p_{2}^{3})^{2}}{3c_{2}^{3}}\left(w-\frac{q_{1}}{(\mu_{1}/p_{1}^{2})}\right)^{3}.
%%\nonumber\\
%=\frac{p_{1}^{2}q^{2}_{1}}{2\mu_{1}c_{1}}+\frac{2\mu_{2}^{2}}{3p_{2}^{6}c_{2}^{3}}\left(w-\frac{p_{1}^{2}q_{1}}{\mu_{1}}\right)^{3}.
%\elabel{qjjiI}
%\end{eqnarray}
Then, by solving the equation $\frac{\partial\bar{f}(q_{1})}{\partial q_{1}}=0$, we can get the minimal value of the function $\bar{f}(q_{1})$ for each $p\in R_{+}^{2}$. More precisely, the unique Pareto minimal point $q^{*}(p,w)=(q_{1}^{*},q_{2}^{*})(p,w)$ to the problem in \eq{adafcn} can be explicitly given by
\begin{eqnarray}
&&\;\;\;\;\left\{\begin{array}{ll}
q_{1}^{*}(p,w)&\equiv\;\;\bar{g}_{1}(p,w)\;=\;\frac{1}{2}\left(\frac{2w}{\mu_{1}}
+\frac{p_{1}^{2}c_{2}^{3}}{2p_{2}^{3}c_{1}\mu_{2}^{2}}\right)\mu_{1}^{2}-\sqrt{\frac{1}{4}\left(\frac{2w}{\mu_{1}}
+\frac{p_{1}^{2}c_{2}^{3}}{2p_{2}^{3}c_{1}\mu_{2}^{2}}\right)^{2}\mu_{1}^{4}-\mu_{1}^{2}w^{2}},\\
%\elabel{explictequil}\\
q_{2}^{*}(p,w)&\equiv\;\;\bar{g}_{2}(p,q_{1}^{*}(p,w),w)\;=\;\mu_{2}\left(w-\frac{q_{1}^{*}(p,w)}{\mu_{1}}\right).
\end{array}
\right.
\elabel{explictequilI}
\end{eqnarray}
%\begin{eqnarray}
%&&\left\{\begin{array}{ll}
%q_{1}^{*}(p,w)&\equiv\;\;\bar{g}_{1}(p,w)\;=\;\frac{1}{2}\left(\frac{2p^{2}_{1}w}{\mu_{1}}
%+\frac{p_{2}^{6}c_{2}^{3}}{2c_{1}\mu_{2}^{2}}\right)\frac{\mu_{1}^{2}}{p_{1}^{4}}-\sqrt{\frac{1}{4}\left(\frac{2p_{1}^{2}w}{\mu_{1}}
%+\frac{p_{2}^{6}c_{2}^{3}}{2c_{1}\mu_{2}^{2}}\right)^{2}\frac{\mu_{1}^{4}}{p_{1}^{8}}-\frac{\mu_{1}^{2}w^{2}}{p_{1}^{4}}},\\
%%\elabel{explictequil}\\
%q_{2}^{*}(p,w)&\equiv\;\;\bar{g}_{2}(p,w)\;=\;\frac{\mu_{2}}{p_{2}^{3}}\left(w-\frac{p_{1}^{2}q_{1}^{*}(p,w)}{\mu_{1}}\right),
%\end{array}
%\right.
%\elabel{explictequilI}
%\end{eqnarray}
\begin{figure}[tbh]
%\centerline{\epsfxsize=7.0in\epsfbox{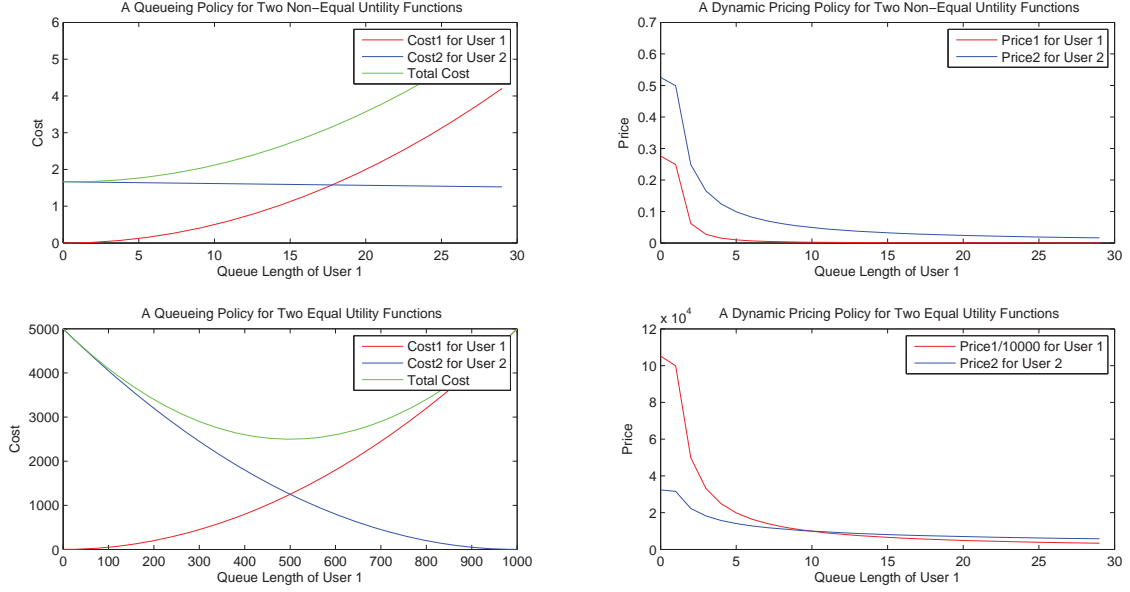}}
\centerline{\epsfxsize=7.1in\epsfbox{twopricingp.eps}}
\caption{\small Pareto optimal Nash equilibrium policies with dynamic pricing}
\label{numexamplei}
%\centerline{\epsfxsize=2.0in\epsfbox{paretooptimal.eps}}
%\caption{\small An example of Pareto optimal Nash
%equilibrium policy}
\end{figure}
From the green curve in the upper-left graph of Figure~\ref{numexamplei} where $p_{1}=p_{2}=1$ and $w=10000$, we can see that this point is close to the one corresponding to $q_{1}=0$ and we can consider it as a Pareto minimal Nash equilibrium point near boundary. Another Nash equilibrium point is the intersection point of the red and blue curves in the left graph of Figure~\ref{numexamplei}. Obviously, this point is not a minimal total cost point. However, we can use some transformation technique to shift the minimal point to this one and to design a more fairly balanced decision policy. Nevertheless, for the purpose of this research in finding the Pareto utility-maximization Nash equilibrium policy, we use the point in \eq{explictequilI} as our decision policy. In this case, for the price parameter $p\in R_{+}^{2}$, we have
\begin{eqnarray}
&&\left\{\begin{array}{ll}
C_{0}(pq^{*},c)\leq C_{0}(pq,c), &\\
%\elabel{fdualcI}\\
C_{1}(pq^{*},c)\leq C_{1}(pq_{-1}^{*},c)&\mbox{with}\;\;\;q^{*}_{-1}=(q_{1},q_{2}^{*}),\\
%\elabel{fdualcII}\\
C_{2}(pq^{*},c)\leq C_{2}(pq_{-2}^{*},c)&\mbox{with}\;\;\;q^{*}_{-2}=(q_{1}^{*},q_{2}).
%\elabel{fdualcIII}
\end{array}
\right.
\elabel{fdualcI}
\end{eqnarray}
Then, associated with a given queue length based Pareto minimal Nash equilibrium point in \eq{explictequilI}, we can obtain the relationship between prices $p_{1}$ and $p_{2}$ as follows,
\begin{eqnarray}
&&\frac{p^{2}_{1}(q_{1},w)}{p_{2}^{3}(q_{1},w)}\equiv\kappa(q_{1},w)=\left(\frac{2c_{1}\mu_{2}^{2}}{c_{2}^{3}}\right)
\left(\frac{\mu_{1}^{2}w^{2}+q_{1}^{2}}{\mu_{1}^{2}q_{1}}-\frac{2w}{\mu_{1}}\right).
\elabel{pqexplictequilI}
\end{eqnarray}
From \eq{pqexplictequilI}, we can see that there are different choices of dynamic pricing policies corresponding to Pareto minimal Nash equilibrium point $q^{*}(p,w)$ in \eq{explictequilI}. For the current study, we take
\begin{eqnarray}
&&\;\;\;\;\left\{\begin{array}{ll}
p_{1}(q_{1},w)&=\;\;\;\kappa^{2}(q_{1},w),\\
p_{2}(q_{1},w)&=\;\;\;\kappa(q_{1},w),
\end{array}
\right.
\elabel{dp1p2}
\end{eqnarray}
whose dynamic evolutions with the queue length $q_{1}$ are shown in the upper-right graph of Figiure~\ref{numexamplei}.
\begin{figure}[tbh]
%\centerline{\epsfxsize=9.0in\epsfbox{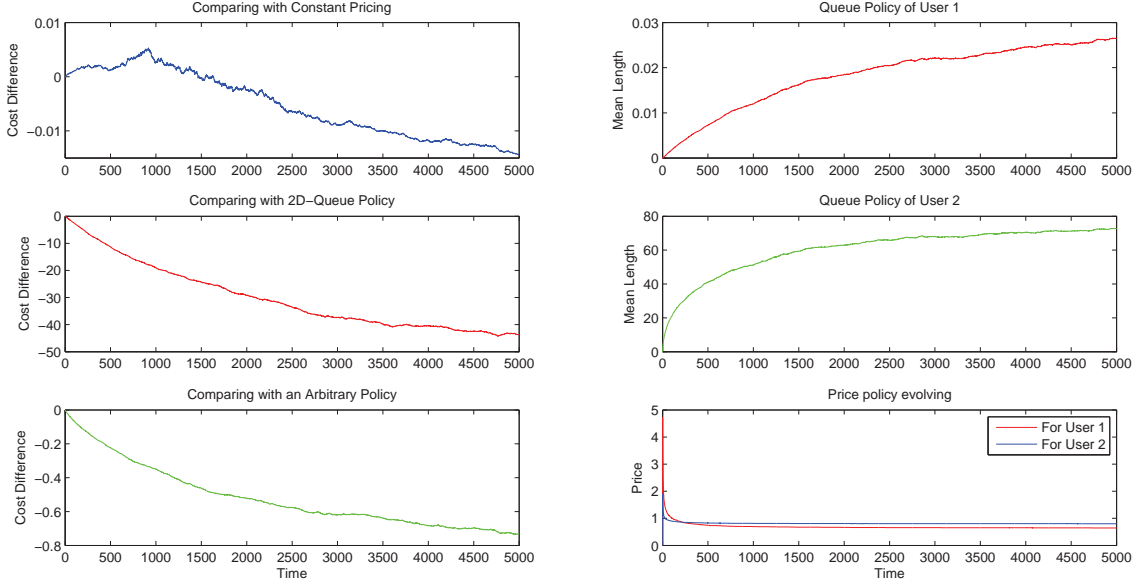}}
\centerline{\epsfxsize=7.1in\epsfbox{dynp-9-3-0.64-0.8.eps}}
\caption{\footnotesize In this simulation, the number of simulation iterative times is $N=6000$, the simulation time interval is $[0,T]$ with $T=200$, which is further divided into $n=5000$ subintervals as explained in Subsection~\ref{singlepoolexample}. Other values of simulation parameters introduced in Definition~\ref{rdrsd} and Subsubsection~\ref{illex} are as follows: initialprice1=9, initialprice2=3, lowerboundprice1=0.64, lowerboundprice2=0.8, $\lambda_{1}=10/3$, $\lambda_{2}=5$, $m_{1}=3$, $m_{2}=1$, $\mu_{1}=1/10$, $\mu_{2}=1/20$, $\alpha_{1}=\sqrt{10}$, $\alpha_{2}=\sqrt{20}$, $\beta_{1}=\sqrt{10}$, $\beta_{2}=\sqrt{20}$, $\zeta_{1}=1$, $\zeta_{2}=\sqrt{2}$, $\rho_{1}=\rho_{2}=1000$, $c_{1}^{2}=c_{2}^{1}=1500$, $\theta_{1}=-1$, $\theta_{2}=-1.2$.}
\label{simexamplei}
\end{figure}

Next, by Theorem~\ref{qdiff}, %and the similar explanations in Dai~\cite{dai:plamod,dai:quacom},
we know that the coefficients of the 1-dimensional RDRS under our dynamic pricing and game-based scheduling policy for the physical workload process $\hat{W}$ can be denoted by
\begin{eqnarray}
&&\left\{\begin{array}{ll}
\hat{b}&=\;\;\theta_{1}/\mu_{1}+\theta_{2}/\mu_{2},\;\;\hat{\sigma}^{E}=\hat{\sigma}^{S}=\left(1/\mu_{1},1/\mu_{2}\right),\;\;\hat{R}=1,\\
%\elabel{ntwocIV}\\
\hat{\sigma}&=\;\;\sqrt{\left(\sum_{j=1}^{2}\hat{\sigma}_{j}^{E}\sqrt{\Gamma_{jj}^{E}}\right)^{2}
+\left(\sum_{j=1}^{2}\hat{\sigma}_{j}^{S}\sqrt{\Gamma_{jj}^{S}}\right)^{2}}.
%\elabel{hatsigma}
\end{array}
\right.
\elabel{ntwocIV}
\end{eqnarray}
Then, based on $\hat{W}$, we can get the dynamic queueing policy by \eq{explictequilI} and its associated dynamic pricing policy through \eq{dp1p2}:
\begin{eqnarray}
&&\hat{Q}(t)=q^{*}(\hat{P}(t),\hat{W}(t)),\;\;\;\hat{P}(t)=\hat{P}(\hat{Q}_{1}(t),\hat{W}(t)).
\elabel{twodynqdynp}
\end{eqnarray}
After determining the initial prices $\hat{P}(0)=(initialprice1, initialprice2)$, we suppose that $\hat{P}(t)$ has the lower bound price protection functionality, i.e., $\hat{P}(t)\in[lowerboundprice1,\infty)$ $\times$ $[lowerboundprice2,\infty)$. Corresponding to \eq{dp1p2}, this truncated price process still owns the Lipschitz continuity as imposed in \eq{pricefun}. Then, by combining the policy in \eq{twodynqdynp} with the simulation algorithms for RDRSs %in Dai~\cite{dai:plamod,dai:quacom},
we can illustrate our policy in \eq{twodynqdynp} is cost-effective in comparing with a constant pricing policy, a 2D-Queue policy and an arbitrarily selected dynamic pricing policy. These simulation comparisons are presented in Figure~\ref{simexamplei}
%Figures~\ref{simexampleI}-\ref{simexampleIII}
with different parameters. The number $N$ of simulation iterative times for these comparisons is 6000 and the simulation time interval is $[0,T]$ with $T=200$, which is further divided into $n=5000$ subintervals. The first graph on the left-column in Figure~\ref{simexamplei}
%each of Figures~\ref{simexampleI}-\ref{simexampleIII}
is the mean total cost difference (MTCD) at each time point $t_{i}$ with $i\in\{0,1,...,5000\}$ between our current dynamic pricing policy in \eq{twodynqdynp} and the constant pricing policy with $P_{1}(t)=P_{2}(t)=1$ for all $t\in[0,\infty)$, i.e.,
\begin{eqnarray}
&&\mbox{MTCD}(t_{i})=\frac{1}{N}\sum_{j=1}^{N}\left(C_{0}(\hat{P}(\omega_{j},t_{i})\hat{Q}(\omega_{j},t_{i}),\rho)
-C_{0}(P(\omega_{j},t_{i})Q(\omega_{j},t_{i}),\rho))\right),
\elabel{costdif}
\end{eqnarray}
where, $\omega_{j}$ denotes the $j$th sample path and the $Q(\omega_{j},t_{i})$ in \eq{costdif} is the queue length corresponding to the constant pricing policy at each time point $t_{i}$. The second graph on the left-column in Figure~\ref{simexamplei}
%each of Figures~\ref{simexampleI}-\ref{simexampleIII}
is the MTCD between our newly designed dynamic pricing policy in \eq{twodynqdynp} and a 2D-Queue policy used as an alternative comparison policy in Dai~\cite{dai:plamod}. For this 2D-Queue policy, the constant pricing with $P_{1}(t)=P_{2}(t)=1$ is employed and the associated $Q(t)$ is presented as a two-dimensional RDRS model as in Dai~\cite{dai:plamod}. The third graph on the left-column in Figure~\ref{simexamplei}
%each of Figures~\ref{simexampleI}-\ref{simexampleIII}
is the MTCD between our current dynamic pricing policy in \eq{twodynqdynp} and an arbitrarily selected dynamic pricing policy given by
\begin{eqnarray}
&&\left\{\begin{array}{ll}
P_{1}(t)&=\;\;\mbox{lowerboundprice1}+\frac{20}{0.05+\sqrt{Q_{1}(t)}},\\
P_{2}(t)&=\;\;\mbox{lowerboundprice2}+\frac{30}{0.1+\sqrt{Q_{2}(t)}}
\end{array}
\right.
\elabel{arbip}
\end{eqnarray}
with the associated queue policy $Q(t)=\hat{Q}(t)$. The first and second graphs on the right-column in Figure~\ref{simexamplei}
%each of Figures~\ref{simexampleI}-\ref{simexampleIII}
display the dynamics of $\hat{Q}(t)$ for both users. The third graph on the right-column in Figure~\ref{simexamplei}
%each of Figures~\ref{simexampleI}-\ref{simexampleIII}
shows the price evolutions corresponding to two users. From the first graph in Figure~\ref{simexamplei}, we can see that the cost is relatively large if the initial prices are relatively high. All of the other comparisons in Figure~\ref{simexamplei}
%Figures~\ref{simexampleI}-\ref{simexampleIII}
show the cost-effectiveness of our policy in \eq{twodynqdynp}.

\subsubsection{The Simulation for Example~\ref{example2}}

Based on the three dual-cost functions in \eq{numcosto}, we can first select any two of the three users for service by formulating the following minimal dual-cost zero-sum game problem for a price parameter $p\in R_{+}^{3}$, a constant $w>0$, and a fixed $c\in{\cal R}$,
\begin{eqnarray}
&&\left\{\begin{array}{ll}
\min_{q\in R_{+}^{3}}C_{0}(pq,c),&\\
%\elabel{adafc}\\
\min_{q\in R_{+}^{3}}C_{0j}(pq,c)&\mbox{subject to}\;\;\;\;q_{1}/\mu_{1}+q_{2}/\mu_{2}\geq w,\\
%\elabel{adafcI}\\
\min_{q\in R_{+}^{3}}\left(-C_{0j_{1}}(pq,c)\right)&\mbox{subject to}\;\;\;\;q_{1}/\mu_{1}+q_{3}/\mu_{3}\geq w,\\
%\elabel{adafcII}\\
\min_{q\in R_{+}^{3}}\left(-C_{0j_{2}}(pq,c)\right)&\mbox{subject to}\;\;\;\;q_{2}/\mu_{2}+q_{3}/\mu_{3}\geq w,
%\elabel{adafcIII}
\end{array}
\right.
\elabel{adafc}
\end{eqnarray}
where, $j\in\{1,2,3\}$, $j_{1}\in\{1,2,3\}\setminus\{j\}$, and $j_{2}\in\{1,2,3\}\setminus\{j,j_{1}\}$, and
\begin{eqnarray}
&&\left\{\begin{array}{ll}
C_{0}(pq,c)&=\;\;C_{1}(p_{1}q_{1},c_{1})+C_{2}(p_{2}q_{2},c_{2})+C_{3}(p_{3}q_{3},c_{3}),\\
%\elabel{3totalct}\\
C_{01}(pq,c)&=\;\;C_{1}(p_{1}q_{1},c_{1})+C_{2}(p_{2}q_{2},c_{2}),\\
%\elabel{312totalct}\\
C_{02}(pq,c)&=\;\;C_{1}(p_{1}q_{1},c_{1})+C_{3}(p_{3}q_{3},c_{3}),\\
%\elabel{313totalct}\\
C_{03}(pq,c)&=\;\;C_{2}(p_{2}q_{2},c_{2})+C_{3}(p_{3}q_{3},c_{3}).
%\elabel{323totalct}
\end{array}
\right.
\elabel{3totalct}
\end{eqnarray}
In other words, if $q^{*}=(q_{1}^{*},q_{2}^{*},q_{3}^{*})$ is a solution to the game problem in \eq{adafc}, and if
\begin{eqnarray}
&&q^{*}_{-j}=\left\{\begin{array}{ll}
(q_{j},q^{*}_{j_{1}},q^{*}_{j_{2}})&\mbox{if}\;\;j=1,\\
(q^{*}_{j_{1}},q_{j},q^{*}_{j_{2}})&\mbox{if}\;\;j=2,\\
(q^{*}_{j_{1}},q^{*}_{j_{2}},q_{j})&\mbox{if}\;\;j=3,
\end{array}
\right.
\elabel{qformpn}
\end{eqnarray}
then, for any two fixed $p,c\in R_{+}^{3}$, we have that
\begin{eqnarray}
&&\left\{\begin{array}{ll}
C_{0}(pq^{*},c)&\leq\;\;C_{0}(pq,c),\\
%\elabel{cpolicyI}\\
C_{0j}(pq^{*},c)&\leq\;\;C_{0j}(pq^{*}_{-j},c),\\
%\elabel{cpolicyII}\\
-C_{0j_{1}}(pq^{*},c)&\leq\;\;-C_{0j_{1}}(pq^{*}_{-j_{1}},c),\\
%\elabel{cpolicyIII}\\
-C_{0j_{2}}(pq^{*},c)&\leq\;\;-C_{0j_{2}}(pq^{*}_{-j_{2}},c).
%\elabel{cpolicyIV}
\end{array}
\right.
\elabel{cpolicyI}
\end{eqnarray}
Furthermore, when two users corresponding to the summation $C_{0j}=C_{k}+C_{l}$ for an index $j\in\{1,2,3\}$ with two associated indices $k,l\in\{1,2,3\}$ as in one of \eq{adafc} are selected, we can propose a 2-stage pricing and queueing policy at each time point by a Pareto minimal dual cost Nash equilibrium point to the non-zero-sum game problem for two fixed $p,c\in R_{+}^{3}$,
\begin{eqnarray}
&&\min_{q\in R_{+}^{3}}C_{0j}(pq,c),\;\;\min_{q\in R_{+}^{3}}C_{k}(pq,c),\;\;\min_{q\in R_{+}^{3}}C_{l}(pq,c).
\elabel{aadafn}
\end{eqnarray}
To wit, if $q^{*}=(q_{k}^{*},q_{l}^{*})$ is a solution to the game problem corresponding to the two users, we have that
\begin{eqnarray}
&&\left\{\begin{array}{ll}
C_{0j}(pq^{*},c)&\geq\;\;C_{0j}(pq,c),\\
%\elabel{acpolicyIn}\\
C_{k}(pq^{*},c)&\geq\;\;C_{1}(pq^{*}_{-k},c)\;\;\;\mbox{with}\;\;\;q^{*}_{-k}=(q_{k},q^{*}_{l}),\\
%\elabel{acpolicyIIn}\\
C_{l}(pq^{*},c)&\geq\;\;C_{l}(pq^{*}_{-l},c)\;\;\;\;\mbox{with}\;\;\;q^{*}_{-l}=(q^{*}_{k},q_{l}).
%\elabel{acpolicyIIIn}
\end{array}
\right.
\elabel{acpolicyIn}
\end{eqnarray}
Thus, for the price parameter $p\in R_{+}^{3}$ and each $w\geq 0$, it follows from \eq{adafc}-\eq{cpolicyI} and \eq{aadafn}-\eq{acpolicyIn} that our queueing policy $(q_{1}^{*}(p,w),q_{2}^{*}(p,w),q_{3}^{*}(p,w))$ can be designed by
\begin{eqnarray}
&&\;\;\;\;\left\{\begin{array}{ll}
\left\{\begin{array}{ll}
q_{1}^{*}(p,w)=\bar{g}_{1}(p_{1},p_{2},w),\\
q_{2}^{*}(p,w)=\bar{g}_{2}(p_{1},p_{2},q_{1}^{*},w)
\end{array}
\right.
&\mbox{if}\;\;
C_{01}(pq^{*},c)\leq\min\left\{C_{02}(pq^{*},c),C_{03}(pq^{*},c)\right\},\\
\left\{\begin{array}{ll}
q_{1}^{*}(p,w)=\hat{g}_{1}(p_{1},p_{3},w),\\
q_{3}^{*}(p,w)=\hat{g}_{3}(p_{1},p_{3},q_{1}^{*},w)
\end{array}
\right.
&\mbox{if}\;\;
C_{02}(pq^{*},c)\leq\min\left\{C_{01}(pq^{*},c),C_{03}(pq^{*},c)\right\},\\
\left\{\begin{array}{ll}
q_{2}^{*}(p,w)=\bar{g}_{2}(p_{3},p_{2},q_{3}^{*},w),\\
q_{3}^{*}(p,w)=\bar{g}_{1}(p_{3},p_{2},w)
\end{array}
\right.
&\mbox{if}\;\;
C_{03}(pq^{*},c)\leq\min\left\{C_{01}(pq^{*},c),C_{02}(pq^{*},c)\right\},
\end{array}
\right.
\elabel{qacpolicyIn}
\end{eqnarray}
where, the function $\bar{g}_{1}$ is given in \eq{explictequilI} and $\hat{g}_{1}$ is calculated in a similar way as follows,
\begin{eqnarray}
&&\;\;\;\;\left\{\begin{array}{ll}
\hat{g}_{1}(p_{1},p_{3},w)&=\frac{(p_{3}^{2}\mu_{3}w)/(\mu_{1}c_{3})}{(p_{1}^{2}/\mu_{1}c_{1})+(p_{3}^{2}\mu_{3}/\mu_{1}^{2}c_{3})},\\
\hat{g}_{3}(p_{1},p_{3},q_{1},w)&=\;\;\mu_{3}\left(w-(q_{1}/\mu_{1})\right).
\end{array}
\right.
\elabel{hatgq1q3}
\end{eqnarray}
The intersection point of $\hat{g}_{1}$ and $\hat{g}_{3}$ in terms of $q_{1}$ is a Pareto optimal Nash equilibrium point as shown in the lower-left graph of Figure~\ref{numexamplei}. Furthermore, based on \eq{qacpolicyIn}-\eq{hatgq1q3}, we can inversely determine our pricing policy $p=(p_{1},p_{2},p_{3})$ as follows,
\begin{eqnarray}
&&\;\;\;\;\left\{\begin{array}{ll}
\left\{\begin{array}{ll}
p_{1}(q^{*}_{1},w)=\kappa^{2}(q_{1}^{*},w),\\
p_{2}(q^{*}_{1},w)=\kappa(q^{*}_{1},w)
\end{array}
\right.
&\mbox{if}\;\;
C_{01}(pq^{*},c)\leq\min\left\{C_{02}(pq^{*},c),C_{03}(pq^{*},c)\right\},\\
\left\{\begin{array}{ll}
p_{1}(q^{*}_{1},w)=\varpi(q^{*}_{1})\hat{\kappa}(q^{*}_{1},w),\\
p_{3}(q^{*}_{1},w)=\varpi(q^{*}_{1})\sqrt{\hat{\kappa}(q_{1}^{*},w)}
\end{array}
\right.
&\mbox{if}\;\;
C_{02}(pq^{*},c)\leq\min\left\{C_{01}(pq^{*},c),C_{03}(pq^{*},c)\right\},\\
\left\{\begin{array}{ll}
p_{2}(q_{3}^{*},w)=\kappa(q_{3}^{*},w),\\
p_{3}(q_{3}^{*},w)=\kappa^{2}(q^{*}_{3},w)
\end{array}
\right.
&\mbox{if}\;\;
C_{03}(pq^{*},c)\leq\min\left\{C_{01}(pq^{*},c),C_{02}(pq^{*},c)\right\},
\end{array}
\right.
\elabel{pqacpolicyIn}
\end{eqnarray}
where, $\kappa$ is defined in \eq{pqexplictequilI} and $\hat{\kappa}$ can be calculated in the same way as follows,
\begin{eqnarray}
&&\hat{\kappa}(q^{*}_{1},w)=\frac{\mu_{3}c_{1}}{q^{*}_{1}c_{3}}\left(w-\frac{q^{*}_{1}}{\mu_{1}}\right).
\elabel{hatkappa}
\end{eqnarray}
Furthermore, $\varpi(q^{*}_{1})$ in \eq{pqacpolicyIn} is a nonnegative function in terms of $q^{*}_{1}$ and it is taken to be the unity in the drawing of dynamic pricing evolving in the lower-graph of Figure~\ref{numexamplei} with $w=10000$.

To show the cost-effectiveness of our queueing policy in \eq{qacpolicyIn} with its associated pricing policy in \eq{pqacpolicyIn}, we present an arbitrarily selected stochastic pooling policy for the purpose of comparisons as follows,
\begin{eqnarray}
&&\;\;\left\{\begin{array}{ll}
\left\{\begin{array}{ll}
q_{1}^{*}(p,w)=\bar{g}_{1}(p_{1},p_{2},w),\\
q_{2}^{*}(p,w)=\bar{g}_{2}(p_{1},p_{2},q_{1}^{*},w)
\end{array}
\right.
&\mbox{if}\;\;
u\in\left[0,\frac{1}{3}\right),\\
\left\{\begin{array}{ll}
q_{1}^{*}(p,w)=\hat{g}_{1}(p_{1},p_{3},w),\\
q_{3}^{*}(p,w)=\hat{g}_{3}(p_{1},p_{3},q_{1}^{*},w)
\end{array}
\right.
&\mbox{if}\;\;
u\in\left[\frac{1}{3},\frac{2}{3}\right),\\
\left\{\begin{array}{ll}
q_{2}^{*}(p,w)=\bar{g}_{2}(p_{3},p_{2},q_{3}^{*},w),\\
q_{3}^{*}(p,w)=\bar{g}_{1}(p_{3},p_{2},w)
\end{array}
\right.
&\mbox{if}\;\;
u\in\left[\frac{2}{3},1\right],
\end{array}
\right.
\elabel{rqacpolicyIn}
\end{eqnarray}
where, $u$ is a uniformly distributed random number.

After determining the initial price vector $\hat{P}(0)=(initialprice1,initialprice2,initialprice3)$, we suppose that $\hat{P}(t)$ has the lower bound price protection and the upper bound constraint functionalities, i.e., $\hat{P}(t)\in[lowerboundprice1$, $upperboundprice1)$ $\times$ $[lowerboundprice2$, $upperboundprice2)$ $\times$ $[lowerboundprice3$, $upperboundprice3)$. Corresponding to \eq{pqacpolicyIn}, this truncated price process still own the Lipschitz continuity as imposed in \eq{pricefun}. Then, by the similar explanations used for \eq{costdif}, we can conduct the corresponding simulation comparisons for this example as shown in Figures~\ref{3simexamplei}-\ref{3simexampleii}. The cost value evolution based on our queueing policy in \eq{qacpolicyIn} with its associated pricing policy in \eq{pqacpolicyIn} is shown in the first graph of the left-column in each of Figures~\ref{3simexamplei}-\ref{3simexampleii}. Its MTCD in \eq{costdif} compared with the arbitrarily selected stochastic pooling policy in \eq{rqacpolicyIn} is displayed in the first graph of the right-column in each of Figures~\ref{3simexamplei}-\ref{3simexampleii}. The cost value evolution based on our queueing policy in \eq{qacpolicyIn} with constant pricing (i.e., $p_{1}=p_{2}=p_{3}$) is shown in the second graph of the left-column in each of Figures~\ref{3simexamplei}-\ref{3simexampleii}. In this constant pricing case, its MTCD in \eq{costdif} compared with the arbitrarily selected stochastic pooling policy in \eq{rqacpolicyIn} is displayed in the second graph of the right-column in each of Figures~\ref{3simexamplei}-\ref{3simexampleii}. The MTCD based on our queueing policy in \eq{qacpolicyIn} with its associated pricing policy in \eq{pqacpolicyIn} and with the constant pricing policy is shown in the third graph of the left-column in each of Figures~\ref{3simexamplei}-\ref{3simexampleii}. The price evolutions for the three users are shown in the third graph of the right-column in each of Figures~\ref{3simexamplei}-\ref{3simexampleii}. In the special case with parameters as shown in Figure~\ref{3simexampleii}, the three price evolutions are the same. Furthermore, the MTCD between our dynamic pricing policy in \eq{pqacpolicyIn} and the constant pricing policy is the number 0 as shown in the third graph of the left-column in Figure~\ref{3simexampleii}.

\section{Justification of RDRS modeling}\label{rdrsm}

In this section, we theoretically prove the correctness of our RDRS modeling presented in Theorem~\ref{qdiff}.

%\subsection{Model Justification Part I: under A Game Scheduling
%Policy}\label{apolicy}

\subsection{The required conditions}

In this subsection, we present the required conditions and assumptions in proving our RDRS modeling. The utility functions can be either simply taken as the well-known proportionally fair and minimal potential delay allocations as used in \eq{numutility} for Example~\ref{singlepoolexample} or generally taken such that the existence of a mixed saddle point and Pareto maximal-utility Nash equilibrium policy to the game problem in \eq{gameoptoo}-\eq{gameoptoIV} is guaranteed. More precisely, for each given $p\in R_{+}^{J}$, we can assume that $U_{vj}(p_{j}q_{j},c_{vj})$ for each $j\in{\cal J}(v)$ and $v\in{\cal V}(j)$ is defined on $R_{+}^{J}$. It is second-order differentiable and satisfies
\begin{eqnarray}
&&\;\;\left\{\begin{array}{ll}
U_{vj}(0,c_{vj})=0,\\
%\elabel{uconI}\\
U_{vj}(p_{j}q_{j},c_{vj})=\Phi_{vj}(p_{j}q_{j})\Psi_{v}(c_{vj})\;\mbox{is strictly increasing/concave in}\;c_{vj}\;\mbox{for}\;p_{j}q_{j}>0,\\
%\elabel{uconII}\\
\Psi_{v}(\nu_{j}c_{vj})=\Psi_{v}(\nu_{j})\Psi_{v}(c_{vj})\;\;\mbox{or}\;\;\Psi_{v}(\nu_{j}c_{vj})=\Psi_{v}(\nu_{j})+\Psi_{v}(c_{vj})\;\;\mbox{for constant}\;\;\nu_{j}\geq 0,\\
%\elabel{faddedcon}\\
\frac{\partial U_{vj}(p_{j}q_{j},c_{vj})}{\partial c_{vj}}\;\;\mbox{is strictly increasing in}\;\;p_{j}q_{j}\geq 0,\;\;\\
%\elabel{uconIII}\\
\frac{\partial U_{vj}(0,c_{vj})}{\partial c_{vj}}=0\;\;\mbox{and}\;\;\lim_{q_{j}\rightarrow\infty}\frac{\partial U_{vj}(p_{j}q_{j},c_{vj})}{\partial c_{vj}}=+\infty\;\;\mbox{for each}\;\;c_{vj}>0.
%\elabel{uconIV}
\end{array}
\right.
\elabel{uconI}
\end{eqnarray}
Furthermore, we suppose that $\{U_{vj}(p_{j}q_{j},c_{vj}),j\in{\cal J}(v),v\in{\cal V}(j)\}$ satisfies the radial homogeneity condition at each given time point $t\in[0,\infty)$. In other words, for any scalar $a>0$, each $q>0$, $i\in{\cal K}$, $v\in{\cal V}$, and each $j_{l}\in{\cal M}(i,v,t)$ with $l\in\{1,...,M_{v}\}$, its Pareto maximal utility Nash equilibrium point for the game has the radial homogeneity
\begin{eqnarray}
&&c_{vj_{l}}(apq,i)=c_{vj_{l}}(pq,i).
\elabel{homcon}
\end{eqnarray}
In addition, we introduce a sequence of independent Markov processes indexed by $r\in{\cal R}$, i.e., $\{\alpha^{r}(\cdot),r\in{\cal R}\}$. These systems all have the same basic structure as presented in the last section except the arrival rates $\lambda^{r}_{j_{l}}(i)$ and the holding time rates $\gamma^{r}(i)$ for all $i\in{\cal K}$, which may vary with $r\in{\cal R}$. Here, we suppose that they satisfy the heavy traffic condition
\begin{eqnarray}
&& r\left(\lambda_{j_{l}}^{r}(i)-\lambda_{j_{l}}(i)\right)m_{j_{l}}(i)
\rightarrow\theta_{j_{l}}(i)\;\;\mbox{as}\;\;r\rightarrow\infty,
\;\;\gamma^{r}(i)=\frac{\gamma(i)}{r^{2}},
\elabel{heavytrafficc}
\end{eqnarray}
where, $\theta_{j_{l}}(i)\in R$ is some constant for each $i\in{\cal K}$.
%Note that, $\theta_{j_{l}}(i)\in R$ is some constant for each $i\in{\cal K}$,
%which can be chosen optimally in certain environment (see, e.g., Dai and
%Jiang~\cite{daijia:stoopt}).
Moreover, we suppose that the nominal arrival rate $\lambda_{j_{l}}(i)$ is given by
\begin{eqnarray}
&&\lambda_{j_{l}}(i)m_{j_{l}}(i)\equiv\mu_{j_{l}}\rho_{j_{l}}(i)
\elabel{brhocon}
\end{eqnarray}
%In practice, a nominal arrival rate is corresponding to the number of links
%allowed in a data service system, which can be realized by the technique
%of admission control (see, e.g., Dai~\cite{dai:optcon} and references therein).
%Furthermore, $\rho_{j_{l}}(i)$ $j\in{\cal J}(v)$ {\textcolor{red}{and all $v\in{\cal V}(j)$}}
and $\rho_{j_{l}}(i)$ in \eq{brhocon} for $j_{l}\in{\cal M}(i,v,t)$ with
$l\in\{1,...,M_{v}\}$ is the nominal throughput determined by
\begin{eqnarray}
&&\rho_{j_{l}}(i)=\sum_{v\in{\cal V}(j_{l})}\rho_{vj_{l}}(i)\;\;\;
\mbox{and}\;\;\;\rho_{vj_{l}}(i)=\nu_{vj_{l}}\bar{\rho}_{vj_{l}}(i)
\elabel{brhoconI}
\end{eqnarray}
with $\rho_{v\cdot}(i)\in{\cal O}_{v}(i)$ that is corresponding to the dimension $M_{v}$. In addition, $\nu_{v\cdot}$ and $\bar{\rho}_{v\cdot}(i)$ are an $J_{v}$-dimensional constant vector and a reference service rate vector, respectively, at service pool $v$, satisfying
\begin{eqnarray}
\sum_{j_{l}\in{\cal M}(i,v,t)\bigcap{\cal J}(v)}\nu_{j_{l}}&=&J_{v},\;\nu_{j_{l}}\geq 0\;\;\;\mbox{are constants for all}\;\;j_{l}\in{\cal M}(i,v,t)\cap{\cal J}(v),
\elabel{weightave}\\
\;\;\;\;\;\;\;\;\;\;
\sum_{j_{l}\in{\cal M}(i,v,t)\cap{\cal J}(v)}\bar{\rho}_{vj_{l}}(i)&=&{\cal C}_{U_{v}}(i)\;\;\mbox{and}\;\;\bar{\rho}_{vj_{1}}(i)=\bar{\rho}_{vj_{l}}(i)\;\;\mbox{for all}\;\;j_{l}\in{\cal M}(i,v,t)\cap{\cal J}(v).
\elabel{rhoji}
\end{eqnarray}
\begin{remark}
By \eq{hmiddle}, $\bar{\rho}_{v\cdot}(i)$ for each $i\in{\cal K}$ and $v\in{\cal V}(j_{l})$ can indeed be selected, which satisfy the second condition in \eq{rhoji}. Thus, the nominal throughput $\rho(i)$ in \eq{brhocon} can be determined. One simple example that satisfies these conditions is to take $\nu_{vj_{l}}=1$ for all $j_{l}\in{\cal M}(i,v,t)\cap{\cal J}(v)$ and $v\in{\cal V}(j_{l})$. Thus, the conditions in \eq{brhocon}-\eq{rhoji} mean that the system manager wishes to maximally and fairly allocate capacity to all users. Moreover, the design parameters $\lambda_{j_{l}}(i)$ for all $j_{l}\in{\cal M}(i,v,t)\cap{\cal J}$ and each $i\in{\cal K}$ can be determined by \eq{brhocon}.
\end{remark}

Next, we assume that the inter-arrival time associated with the $k$th arriving job batch to the system indexed by $r\in{\cal R}$ is given by
\begin{eqnarray}
&&u_{j_{l}}^{r}(k,i)=\frac{\hat{u}_{j_{l}}(k)}{\lambda_{j_{l}}^{r}(i)}\;\;\mbox{for each}\;\;j_{l}\in{\cal M}(i,v,t)\cap{\cal J},\;k\in\{1,2,...\},
\;i\in{\cal K},
\elabel{intercon}
\end{eqnarray}
where, the $\hat{u}_{j_{l}}(k)$ does not depend on $r$ and $i$. Moreover, it has mean one and finite squared coefficient of variation $\alpha_{j_{l}}^{2}$. In addition, the number of packets, $w_{j_{l}}(k)$, and the packet length $v_{j_{l}}(k)$ are assumed not to change with $r$. Thus, it follows from the heavy traffic condition in \eq{heavytrafficc} for the $r$th environmental state process $\alpha^{r}(\cdot)$ with $r\in{\cal R}$ that $\alpha^{r}(r^{2}\cdot)$ and $\alpha(\cdot)$ equal to each other in distribution since they own the same generator matrix (see, e.g., the definition in pages 384-388 of Resnick~\cite{res:advsto}). Therefore, under the sense of distribution, all of the systems indexed by $r\in{\cal R}$ in \eq{rsqueue} has the same random environment over any time interval $[0,t]$.

\subsection{Proof of Theorem~\ref{qdiff}}\label{proofmain}

%For clearance, we divide our proof into several parts as given
%in the subsequent subsections.

%\subsection{Diffusion and Fluid Scaled Processes}\label{diffluid}

First, it follows from the second condition in \eq{heavytrafficc} that the processes $\alpha^{r}(r^{2}\cdot)$ for each $r\in{\cal R}$ and $\alpha(\cdot)$ are equal in distribution. Hence, without loss of generality, we can assume that
\begin{eqnarray}
&&\alpha^{r}(r^{2}t)=\alpha(t)\;\;\;\mbox{for each}\;\;\;r\in{\cal R}\;\;\;\mbox{and}\;\;\;t\in[0,\infty).
\elabel{alphaequal}
\end{eqnarray}
Thus, for each $j\in{\cal J}$, $r\in{\cal R}$ and by the radial homogeneity of $\Lambda(pq,i)$ of the policy in \eq{homcon}, we can define the fluid and diffusion scaled processes as follows,
\begin{eqnarray}
E^{r}_{j}(\cdot)&\equiv&A^{r}_{j}(r^{2}\cdot),
\elabel{enee}\\
\bar{T}^{r}_{j}(\cdot)&\equiv&\int_{0}^{\cdot}\Lambda_{j}\left(\bar{P}^{r}(s)\bar{Q}^{r}(s),\alpha(s),s\right)ds=\frac{1}{r^{2}}T_{j}^{r}(r^{2}\cdot),
\elabel{expresslambda}\\
\bar{Q}_{j}^{r}(t)&\equiv&\frac{1}{r^{2}}Q^{r}_{j}(r^{2}t),
\elabel{barqueue}\\
\bar{P}^{r}_{j}(t)&=&f_{j}(\bar{Q}^{r}_{j}(t),\alpha(t)),
\elabel{barpricefun}\\
%\bar{W}^{V,r}(t)&\equiv&\frac{1}{r^{2}}W^{V,r}(r^{2}t),
%\elabel{qebar}\\
%\bar{Y}^{V,r}(t)&\equiv&\frac{1}{r^{2}}Y^{V,r}(r^{2}t),
%\elabel{qebarI}\\
\bar{E}^{r}_{j}(t)&\equiv&\frac{1}{r^{2}}E_{j}^{r}(t),
\elabel{qebarII}\\
\bar{S}^{r}_{j}(t)&\equiv&\frac{1}{r^{2}}S_{j}^{r}(r^{2}t).
\elabel{qebarIII}
\end{eqnarray}
Then, it follows from \eq{queuelength}, \eq{alphaequal}, the assumptions among arrival and service processes that
\begin{eqnarray}
&&\hat{Q}^{r}_{j}(\cdot)=\frac{1}{r}E^{r}_{j}(\cdot)-\frac{1}{r}S^{r}_{j}(\bar{T}^{r}_{j}(\cdot)).
\elabel{centerhatQ}
\end{eqnarray}
Furthermore, for each $j\in{\cal J}$, let
\begin{eqnarray}
&&\hat{E}^{r}(\cdot)=(\hat{E}^{r}_{1}(\cdot),...,\hat{E}^{r}_{J}(\cdot))'\;\;\mbox{with}\;\;\hat{E}^{r}_{j}(\cdot)=\frac{1}{r}
\left(A^{r}_{j}(r^{2}\cdot)-r^{2}\bar{\lambda}^{r}_{j}(\cdot)\right),
\elabel{ecenterrs}\\
&&\hat{S}^{r}(\cdot)=(\hat{S}^{r}_{1}(\cdot),...,\hat{S}^{r}_{J}(\cdot))'\;\;\;\;\mbox{with}\;\;
\hat{S}_{j}^{r}(\cdot)=\frac{1}{r}\left(S_{j}(r^{2}\cdot)-\mu_{j}r^{2}\cdot\right),
\elabel{scenterrs}
\end{eqnarray}
where,
\begin{eqnarray}
&&\bar{\lambda}^{r}_{j}(\cdot)\equiv\int_{0}^{\cdot}m_{j}(\alpha(s),s)\lambda_{j}^{r}(\alpha(s),s)ds
=\frac{1}{r^{2}}\int_{0}^{r^{2}\cdot}m_{j}(\alpha^{r}(s),r^{2}s)\lambda_{j}^{r}(\alpha^{r}(s),r^{2}s)ds.
\elabel{averagerate}
%&=&\int_{0}^{\cdot}m_{j}(\alpha^{r}(r^{2}s),r^{2}s)\lambda_{j}^{r}(\alpha^{r}(r^{2}s),r^{2}s)ds
%\nonumber\\
%&=&\frac{1}{r^{2}}\int_{0}^{r^{2}\cdot}m_{j}(\alpha^{r}(s),r^{2}s)\lambda_{j}^{r}(\alpha^{r}(s),r^{2}s)ds.
%\nonumber
\end{eqnarray}
For convenience, we define
\begin{eqnarray}
\bar{\lambda}^{r}(\cdot)&=&\left(\bar{\lambda}^{r}_{1}(\cdot),...,\bar{\lambda}^{r}_{J}(\cdot)\right)'.
\elabel{lambdaeo}
\end{eqnarray}
In addition, we let $\bar{Q}^{r}(\cdot)$, $\bar{E}^{r}(\cdot)$, $\bar{S}^{r}(\cdot)$, and $\bar{T}^{r}(\cdot)$ be the associated vector processes. Then, for the processes in \eq{enee}-\eq{centerhatQ}, we define the corresponding fluid limit related processes,
\begin{eqnarray}
\bar{Q}_{j}(t)&=&\bar{Q}_{j}(0)+\bar{\lambda}_{j}(t,\zeta_{t}(\cdot))-\mu_{j}\bar{T}_{j}(t)\;\;\mbox{for each}\;\;j\in{\cal J},
\elabel{limqtnon}
%\bar{W}(t)&=&\sum_{j=1}^{J}\frac{\bar{Q}_{j}(t)}{\mu_{j}}
%=\bar{W}(0)+\bar{Y}(t),\elabel{limwtnon}\\
%\bar{Y}(t)&=&\sum_{j=1}^{J}\left(\int_{0}^{t}\rho_{j}(\alpha(s))ds
%-\bar{T}_{j}(t)\right),\elabel{limwtnonI}\\
\end{eqnarray}
where, $\zeta_{t}(\cdot)$ denotes a process depending on the external environment, i.e.,
\begin{eqnarray}
\bar{\lambda}(t) &=&\left(\bar{\lambda}_{1}(t),...,\bar{\lambda}_{J}(t)\right)',\;\; \bar{\lambda}_{j}(t)\equiv\int_{0}^{t}m_{j}\lambda_{j}(\alpha(s),s)ds.
\elabel{lambdae}
\end{eqnarray}
Furthermore, we have that
\begin{eqnarray}
\bar{T}_{j}(t)&=&\int_{0}^{t}\bar{\Lambda}_{j}(\bar{P}(s)\bar{Q}(s),\alpha(s),s)ds,
\elabel{limtnon}\\
\bar{P}(t)&=&f(\bar{Q}(t),\alpha(t)),
\elabel{barpricefun}
\end{eqnarray}
where, for each $i\in{\cal K}$ and $t\in[0,\infty)$, we have that
\begin{eqnarray}
\bar{\Lambda}_{j}(pq,i,t)&=&\left\{\begin{array}{ll}
\Lambda_{j}(pq,i,t)&\mbox{if}\;\;q_{j}>0,j\in\bigcup_{v\in{\cal V}}{\cal M}(i,v,t),\\
\rho_{j}(i,t)&\mbox{if}\;\;q_{j}>0,j\nsubseteq\bigcup_{v\in{\cal V}}{\cal M}(i,v,t),\\
\rho_{j}(i,t)\;\;\;\;&\mbox{if}\;\;q_{j}=0.
\end{array}\right.
\elabel{barLAMnonI}
\end{eqnarray}
Then, we have the following lemma concerning the weak convergence to a stochastic fluid limit process under our game-competition based dynamic pricing and scheduling strategy.
\begin{lemma}\label{fluidlemma}
Assume that the initial queue length $\bar{Q}^{r}(0)\Rightarrow\bar{Q}(0)$ along $r\in{\cal R}$. Then, the joint convergence in distribution along a
subsequence of ${\cal R}$ is true under our game-competition based dynamic pricing and scheduling strategy in \eq{gameoptoo} and \eq{policyv} with the conditions required by Theorem~\ref{qdiff},
\begin{eqnarray}
&&\left(\bar{E}^{r}(\cdot),\bar{S}^{r}(\cdot),\bar{T}^{r}(\cdot),\bar{Q}^{r}(\cdot)\right)\Rightarrow\left(\bar{E}(\cdot),\bar{S}(\cdot),
\bar{T}(\cdot),\bar{Q}(\cdot)\right).
\elabel{fluidcon}
\end{eqnarray}
In addition, if $\bar{Q}(0)=0$, the convergence is true along the whole ${\cal R}$ and the limit satisfies
\begin{eqnarray}
&&\bar{E}(\cdot)=\bar{\lambda}(\cdot),\;\;\bar{S}(\cdot)=\mu(\cdot),\;\;\bar{T}(\cdot)=\bar{c}(\cdot),\;\;\bar{Q}(\cdot)=0,
\elabel{fluidlimit}
\end{eqnarray}
where, $\bar{\lambda}(\cdot)$ is defined in \eq{lambdae}, $\mu(\cdot)\equiv(\mu_{1},...,\mu_{J})'\cdot$, and $\bar{c}(\cdot)$ is defined by
\begin{eqnarray}
\;\;\;\;\bar{c}(t)&=&\left(\bar{c}_{1}(t),...,\bar{c}_{J}(t)\right)'\;\;\mbox{and}\;\;\bar{c}_{j}(t)\equiv\int_{0}^{t}\rho_{j}(\alpha(s),s)ds\;\;
\mbox{for each}\;\;j\in{\cal J}.
\elabel{nbarvc}
\end{eqnarray}
\end{lemma}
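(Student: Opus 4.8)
The plan is to follow the standard two-stage paradigm for fluid limits: first show that the scaled family $(\bar{E}^{r},\bar{S}^{r},\bar{T}^{r},\bar{Q}^{r})$ is tight with continuous limit points (C-tight) in the Skorohod space $D([0,\infty),R^{4J})$, and then characterize every subsequential weak limit as a solution of the fluid-model equations \eq{limqtnon}--\eq{barpricefun}. The convergence-along-a-subsequence assertion follows at once from tightness together with this characterization; the upgrade to convergence along the entire sequence ${\cal R}$ when $\bar{Q}(0)=0$ is then reduced to a uniqueness statement for the fluid model started at the origin.

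First I would establish tightness and identify the primitive limits. By the functional strong law of large numbers for the TSRRP $A_{j}^{r}(\cdot)$ of Definition~\ref{dsrp} together with the scalings \eq{enee} and \eq{qebarII}, the fluid-scaled arrival process converges, $\bar{E}^{r}(\cdot)\Rightarrow\bar{\lambda}(\cdot)$ with $\bar{\lambda}$ as in \eq{lambdae}, and the renewal service process satisfies $\bar{S}^{r}(\cdot)\Rightarrow\mu(\cdot)$. Since every admissible rate vector lies in the bounded capacity region ${\cal R}_{v}(\alpha(t))$, the allocation $\Lambda_{j}$ is uniformly bounded, so by \eq{expresslambda} each $\bar{T}^{r}_{j}$ is Lipschitz with a common modulus and is therefore C-tight; tightness of $\bar{Q}^{r}$ then follows from the fluid form of the balance relation \eq{centerhatQ}. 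As the limits of $\bar{E}^{r}$ and $\bar{S}^{r}$ are deterministic and continuous, the joint family is C-tight.

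Next I would pass to the limit along any convergent subsequence. Applying the continuous-mapping theorem to \eq{centerhatQ} and invoking the Lipschitz continuity of the pricing map $f$ from \eq{pricefun} (so that $\bar{Q}^{r}\Rightarrow\bar{Q}$ forces $\bar{P}^{r}\Rightarrow\bar{P}=f(\bar{Q},\alpha)$), any subsequential limit obeys \eq{limqtnon}--\eq{barpricefun}. The crucial structural input is the radial homogeneity \eq{homcon} of the game-competition policy: because the saddle-point/Nash rate vector satisfies $c_{vj_{l}}(apq,i)=c_{vj_{l}}(pq,i)$ for every $a>0$, the fluid-scaled allocation is insensitive to the magnitude of $\bar{Q}^{r}$, and the work-conservation built into the policy pins the limiting rate to the form \eq{barLAMnonI}, equal to the nominal throughput $\rho_{j}$ on empty queues and to the full allocation $\Lambda_{j}$ on the selected non-empty ones. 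This identifies every subsequential limit as a fluid-model solution and establishes the first assertion.

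The hard part will be the uniqueness argument yielding the explicit limit \eq{fluidlimit} when $\bar{Q}(0)=0$. The heavy-traffic balance \eq{brhocon}, namely $\lambda_{j}(i)m_{j}(i)=\mu_{j}\rho_{j}(i)$, makes $\bar{Q}(\cdot)\equiv 0$ an obvious solution, since substituting the empty-queue branch of \eq{barLAMnonI} into \eq{limqtnon} gives drift $m_{j}\lambda_{j}(\alpha(t))-\mu_{j}\rho_{j}(\alpha(t))=0$ and hence $\bar{T}(\cdot)=\bar{c}(\cdot)$ as in \eq{nbarvc}. To show this is the \emph{only} solution from the origin, I would verify that the on-surface, work-conserving nature of the policy makes the aggregate served rate on the non-empty queues at least the nominal throughput through the capacity-surface inequalities \eq{copproperty}, so that the net drift of the weighted backlog $\sum_{j}\bar{Q}_{j}(t)/\mu_{j}$ is nonpositive whenever that backlog is strictly positive; being absolutely continuous and vanishing at $t=0$, it must then stay at zero for all $t$. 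Once the limit is deterministic and unique in this way, no subsequence is required and convergence holds along all of ${\cal R}$. The principal delicacy is confirming that the mixed zero-sum/non-zero-sum policy of Definitions~\ref{definitiona}--\ref{definitionb} genuinely keeps the allocation on the capacity surface, so that \eq{copproperty} is available uniformly in time and state.
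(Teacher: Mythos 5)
Your first two steps (C-tightness of the scaled quadruple and identification of every subsequential limit with the fluid equations \eq{limqtnon}--\eq{barLAMnonI}) follow essentially the paper's route: the paper outsources tightness and the a.s.\ Lipschitz continuity of fluid limits to Lemma 7 of Dai~\cite{dai:optrat} and then identifies $d\bar{T}_{j}(t)/dt$ at regular points exactly as you propose. One thing you gloss over is that passing to the limit inside the time integral \eq{expresslambda} requires continuity of the game-optimal allocation map $pq\mapsto\Lambda_{vj}(pq,i)$; the paper proves this separately (the statement \eq{ralloccon}, obtained by extending Lemma 3 of \cite{dai:optrat} together with Lipschitz continuity of the pricing map), and it does not follow from radial homogeneity \eq{homcon} alone. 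That is a repairable omission.

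The genuine gap is in your uniqueness step. You claim that \eq{copproperty} implies the aggregate service rate over the non-empty queues dominates the corresponding sum of nominal throughputs, so that the workload $\sum_{j}\bar{Q}_{j}/\mu_{j}$ has nonpositive drift. This is false for the class of capacity regions and policies the lemma covers. The two inequalities in \eq{copproperty} compare two \emph{game-optimal} allocations with each other ($c^{\emptyset}_{v\cdot}(i)$ versus $c^{\cal Q}_{v\cdot}(i)$); neither compares an allocation with the nominal throughput vector $\rho_{v\cdot}(i)$, which is just a fixed point of the capacity surface chosen via \eq{brhocon}--\eq{rhoji}. Concretely, the paper allows curved facets on the capacity surface in addition to the linear central facet \eq{hmiddle}: take one pool, two users, region $\{c:c_{1}^{2}+c_{2}^{2}\le 1250^{2}\}\cap\{c_{1}+c_{2}\le 1700\}$ and $\rho=(850,850)$ on the central facet. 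When both queues are positive and the utilities are strongly imbalanced, the Pareto--Nash allocation sits on a circular arc where $c_{1}+c_{2}<1700=\rho_{1}+\rho_{2}$, so the workload drift $\sum_{j:\bar{Q}_{j}>0}(\rho_{j}-\Lambda_{j})$ is strictly positive: the workload is simply not a Lyapunov function here, and your absolute-continuity argument cannot start. This is precisely why the paper instead uses the dual-cost function $\psi(pq,i)$ of \eq{psicj}, the sum of the costs $C_{vj_{l}}(p_{j_{l}}q_{j_{l}},\rho_{vj_{l}}(i))$ evaluated at the \emph{nominal} rates. Its derivative along fluid trajectories, computed in \eq{lyapunov}, equals $\sum(\rho_{vj_{l}}-\Lambda_{vj_{l}})\,\partial_{c}U_{vj_{l}}(\bar{P}\bar{Q}_{j_{l}},\rho_{vj_{l}})$ over non-empty queues, and this is $\le 0$ by concavity of $U_{vj}$ in $c$ (which gives $(\rho-\Lambda)\partial_{c}U(pq,\rho)\le U(pq,\rho)-U(pq,\Lambda)$ componentwise) combined with the fact that $\Lambda$ maximizes total utility over a region containing $\rho$. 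A second point your route misses is that any Lyapunov argument here must cope with the regime switching: $\psi(\cdot,i)$ depends on the environment state through $\rho(i)$, so it jumps at the switching times $\tau_{n}$, and the paper controls these jumps with the constant $\kappa$ in the chain of inequalities \eq{psiine} before concluding $\bar{Q}\equiv 0$ from $\bar{Q}(0)=0$. To fix your proposal, replace the workload with this dual-cost functional (or an equivalent variational-inequality argument) and add the jump control at the $\tau_{n}$.
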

\begin{proof}
%\proof{Proof of Lemma \ref{fluidlemma}.}
First, by the proof of Lemma 1 in Dai~\cite{dai:optrat} and the implicit function theorem, we can show that the pricing function $f$ constructed through \eq{costminpo}, \eq{gameoptoo}, and \eq{policyv} can be assumed to be Lipschitz continuous. Then, by extending the proof of Lemma 3 in Dai~\cite{dai:optrat} and under the conditions in \eq{uconI}-\eq{homcon} and the just illustrated Lipschitz continuity for $f$, we know that, if $\Lambda(pq,i)\in F_{{\cal Q}}(i)$ for each $i\in{\cal K}$ is a given mixed saddle and Pareto optimal Nash equilibrium policy to the game problem in \eq{gameoptoo} and $\{p^{l}q^{l},l\in{\cal R}\}$ is a sequence of valued queue lengths, which satisfies $p^{l}q^{l}\rightarrow pq\in R_{+}^{J}$ as $l\rightarrow\infty$. Then, for each $j\in{\cal J}\setminus {\cal Q}(q)$ and $v\in{\cal V}(j)$, we have that
\begin{eqnarray}
&&\Lambda_{vj}(p^{l}q^{l},i)\rightarrow\Lambda_{vj}(pq,i)\;\;\mbox{as}\;\;l\rightarrow\infty.
\elabel{ralloccon}
\end{eqnarray}

Second, due to the proof of Lemma 7 in Dai~\cite{dai:optrat}, we only need to prove that a weak fluid limit on the right-hand side of \eq{fluidcon} satisfies \eq{nbarvc}. In doing so, we suppose that the weak fluid limit on the right-hand side of \eq{fluidcon} corresponds to a subsequence of the right-hand side of \eq{fluidcon}, which is indexed by $r_{l}\in{\cal R}$ with $l\in\{1,2,...\}$. Furthermore, it follows from \eq{expresslambda}, \eq{tjqalpha}, and the discussion in the proof of Lemma 7 of Dai~\cite{dai:optrat} that the fluid limit process on the right-hand side of \eq{fluidcon} is uniformly Lipschitz continuous a.s. Thus, our discussion can focus on a fixed sample path and each regular point $t>0$ over an interval $(\tau_{n-1},\tau_{n})$ with $n\in\{1,2,...\}$ for $\bar{T}_{j}$ with $j\in{\cal J}$. More precisely, it follows from \eq{limqtnon} that $\bar{Q}$ is differential at $t$ and satisfies
\begin{eqnarray}
&&\frac{d\bar{Q}_{j}(t)}{dt}=m_{j}\lambda_{j}(\alpha(t),t)-\mu_{j}\frac{d\bar{T}_{j}(t)}{dt}
\elabel{barqbart}
\end{eqnarray}
for each $j\in{\cal J}$. If $\bar{Q}_{j}(t)=0$ for some $j\in{\cal J}$, then it follows from $\bar{Q}_{j}(\cdot)\geq 0$ that
\begin{eqnarray}
&&\frac{d\bar{Q}_{j}(t)}{dt}=0\;\;\mbox{which implies that}\;\;\frac{d\bar{T}_{j}(t)}{dt}=\frac{m_{j}\lambda_{j}(\alpha(t),t)}{\mu_{j}} =\rho_{j}(\alpha(t),t).
\elabel{firstfc}
\end{eqnarray}
If $\bar{Q}_{j}(t)>0$ for the $j\in{\cal J}$, there is a finite interval $(a,b)\in[0,\infty)$ containing $t$ in it such that $\bar{Q}_{j}(s)>0$ for all $s\in(a,b)$ and hence we can take sufficiently small $\delta>0$ such that $\bar{Q}_{j}(t+s)>0$ with $s\in(0,\delta)$. Furthermore, by \eq{pricefun}, $P_{j}(t+s)>0$. Now, let $r_{l}$ with $l\in{\cal R}$ be the subsequence ${\cal R}$ and let $\delta_{l}\in(0,\delta]$ be a sequence such that $\delta_{l}\rightarrow 0$ as $l\rightarrow\infty$ while $\Lambda_{j}$ determined by a same group of users over $(0,\delta_{l}]$. Then, it follows from \eq{expresslambda} that
\begin{eqnarray}
&&\left|\frac{1}{\delta_{l}}\left(\bar{T}^{r_{l}}_{j}(t+\delta_{l})-\bar{T}^{r_{l}}_{j}(t)\right)-\Lambda_{j}(\bar{P}(t)\bar{Q}(t),\alpha(t),t)\right|
\elabel{bartevl}\\
&\leq&\frac{1}{\delta_{l}}\int_{0}^{\delta_{l}}\Big|\Lambda_{j}(\bar{P}^{r_{l}}(t+s)\bar{Q}^{r_{l}}(t+s),\alpha(t+s),t+s)
\nonumber\\
&&\;\;\;\;\;\;\;\;\;\;\;\;\;\;\;\;\;\;\;\;-\Lambda_{j}(\bar{P}(t+s)\bar{Q}(t+s),\alpha(t+s),t+s)\Big|ds
\nonumber\\
&&+\frac{1}{\delta_{l}}\int_{0}^{\delta_{l}}\left|\Lambda_{j}(\bar{P}(t+s)\bar{Q}(t+s),\alpha(t+s),t+s)
-\Lambda_{j}(\bar{P}(t)\bar{Q}(t),\alpha(t),t)\right|ds
\nonumber\\
\nonumber\\
&\rightarrow&0\;\;\;\;\mbox{as}\;\;l\rightarrow\infty,
\nonumber
\end{eqnarray}
where, the last claim in \eq{bartevl} follows from the Lebesgue dominated convergence theorem, the right-continuity of $\alpha(\cdot)$, the Lipschitz continuity of $\bar{Q}(\cdot)$, and the fact in \eq{ralloccon}. Since $t$ is a regular point of $\bar{T}$, it follows from \eq{bartevl} that
\begin{eqnarray}
&&\frac{d\bar{T}_{j}(t)}{dt} =\frac{d\bar{T}_{j}(t^{+})}{dt}=\bar{\Lambda}_{j}(\bar{Q}(t),\alpha(t),t)\;\;\mbox{for each}\;\;j\in{\cal J} \elabel{fracbart}
\end{eqnarray}
which implies that the claims in \eq{limtnon}-\eq{barLAMnonI} are true.

Along the line of the proofs for Lemma 4.2 in Dai~\cite{dai:plamod}, Lemma 4.1 in Dai~\cite{dai:quacom}, and Lemma 7 in Dai~\cite{dai:optrat}, it suffices to prove the claim that $\bar{Q}(\cdot)=0$ in \eq{fluidlimit} holds for the purpose of our current paper. In fact, for each $i\in{\cal K}$ and $l\in\{1,...,M_{v}\}$, we define
\begin{eqnarray}
&&\psi(pq,i)\equiv\sum_{v\in{\cal V}}\psi_{v}(pq,i)=\sum_{v\in{\cal V}}\sum_{j_{l}\in{\cal M}(i,v)\bigcap{\cal J}(v)}C_{vj_{l}}(p_{j_{l}}q_{j_{l}},\rho_{vj_{l}}(i)).
\elabel{psicj}
\end{eqnarray}
Then, at each regular time $t\geq 0$ of $\bar{Q}(t)$ over time interval $(\tau_{n-1},\tau_{n})$ with a given $n\in\{1,2,...\}$, we have that
\begin{eqnarray}
&&\;\;\;\;\;\frac{d\psi(\bar{P}(t)\bar{Q}(t),\alpha(t))}{dt}
\elabel{lyapunov}\\
%&=&\sum_{v\in{\cal V}}\sum_{j_{l}\in{\cal M}(i,v,t)\bigcap{\cal J}(v)}\left(\frac{d(\bar{P}_{j_{l}}(t)\bar{Q}_{j_{l}}(t))}{dt}\frac{\partial %C_{vj_{l}}(\bar{P}_{j_{l}}(t)\bar{Q}_{j_{l}}(t),\rho_{vj_{l}}(\alpha(t),t))}{\partial(\bar{P}_{j_{l}}(t)\bar{Q}_{j_{l}}(t))}\right.
%\nonumber\\
%&&\;\;\;\;\;\;\;\;\;\;\;\;\;\;\;\;\;\;\;\;\;\;\;\;\;\;\;\;\;\;\;\;\;\;\;\;
%\left.+\frac{d\rho_{vj_{l}}(\alpha(t),t)}{dt}\frac{\partial %C_{vj_{l}}(\bar{P}_{j_{l}}(t)\bar{Q}_{j_{l}}(t),\rho_{vj_{l}}(\alpha(t),t))}{\partial\rho_{vj_{l}}(\alpha(t),t)}\right)
%\nonumber\\
&=&\sum_{v\in{\cal V}}\sum_{j_{l}\in{\cal M}(i,v,t)\bigcap{\cal J}(v)}\Bigg(\rho_{vj_{l}}(\alpha(t),t)-\Lambda_{vj_{l}}(\bar{P}(t)\bar{Q}(t),\alpha(t),t)\Bigg)
\nonumber\\
&&\;\;\;\;\;\;\;\;\;\;\;\;\;\;\;\;\;\;\;\;\;\;\;\;\;\;\;\;\;\;\;\;\;\;\;\;
\frac{\partial U_{vj}(\bar{P}(t)\bar{Q}_{j_{l}}(t),\rho_{vj_{l}}(\alpha(t),t))}{\partial\rho_{vj_{l}}(\alpha(t),t)}I_{\{\bar{P}_{j_{l}}(t)\bar{Q}_{j_{l}}(t)>0\}}
\nonumber\\
&\leq&0.
\nonumber
\end{eqnarray}
Note that, the second equality in \eq{lyapunov} follows from the concavity of the utility functions and the fact that $\Lambda_{vj}(\bar{P}(t)\bar{Q}(t),\alpha(t),t)$ is the Pareto maximal Nash equilibrium policy to the utility-maximal game problem in \eq{gameoptoo} when the system is in a particular state. Thus, for any given $n\in\{0,1,2,...\}$ and each $t\in[\tau_{n},\tau_{n+1})$,
\begin{eqnarray}
\;\;\;\;\;\;\;\;0&\leq&\psi(\bar{P}(t)\bar{Q}(t),\alpha(t))
\elabel{psiine}\\
%\elabel{psiineI}\\
&\leq&\psi(\bar{P}(\tau_{n})\bar{Q}(\tau_{n}),\alpha(\tau_{n}))
\nonumber\\
&=&\sum_{v\in{\cal V}}\sum_{j_{l}\in{\cal M}(i,v,\tau_{n})\bigcap{\cal J}(v)}\frac{1}{\mu_{j_{l}}}\int_{0}^{\bar{Q}_{j_{l}}(\tau_{n})}\frac{\partial U_{vj_{l}}(\bar{P}(\tau_{n})u,\rho_{vj_{l}}(\alpha(\tau_{n})))}{\partial C_{vj_{l}}}du
\nonumber\\
&=&\sum_{v\in{\cal V}}\left(\frac{d\psi_{v}(\bar{\rho}_{vj_{1}}(\alpha(\tau_{n})))}
{dc_{vj_{1}}}\right)\left(\frac{d\psi_{v}(\bar{\rho}_{vj_{1}}(\alpha(\tau_{n-1})))}{dc_{vj_{1}}}
\right)^{-1}\psi_{v}(\bar{P}(\tau_{n})\bar{Q}(\tau_{n}), \alpha(\tau_{n-1}))
\nonumber\\
&...&
\nonumber\\
&\leq&\sum_{v\in{\cal V}}\left(\frac{d\psi_{v}(\bar{\rho}_{vj_{1}}(\alpha(\tau_{n})))}
{dc_{vj_{1}}}\right)\left(\frac{d\psi_{v}(\bar{\rho}_{vj_{1}}(\alpha(\tau_{0})))}{dc_{vj_{1}}}
\right)^{-1}\psi_{v}(\bar{P}(0)\bar{Q}(0),\alpha(0))
\nonumber\\
&\leq&\kappa\psi(\bar{P}(0)\bar{Q}(0),\alpha(0)),
\nonumber
\end{eqnarray}
where, $\kappa$ is a positive constant, i.e.,
\begin{eqnarray}
&&\kappa=\max_{v\in{\cal V}}\max_{i,j\in{\cal K}}\left(\frac{d\psi_{v}(\bar{\rho}_{vj_{1}}(i))}
{dc_{vj_{1}}}\right)\left(\frac{d\psi_{v}(\bar{\rho}_{vj_{1}}(j))}{dc_{vj_{1}}}\right)^{-1}.
\nonumber
\end{eqnarray}
Then, by the fact in \eq{psiine}, we know that $\bar{Q}(t)=0$ for all $t\geq 0$. Therefore, we complete the proof of the lemma. $\Box$
\end{proof}

\vskip 0.3cm
In the end, by considering a specific state $i\in{\cal K}$ and by the index way as used in the proof of Lemma~\ref{fluidlemma}, we can extend the proofs for Lemma 4.3 to Lemma 4.5 in Dai~\cite{dai:plamod} to the current setting. Then, by using the results in these lemmas to the proof for
Theorem 1 in \cite{dai:optrat}, we can reach a proof for Theorem~\ref{qdiff} of this paper. $\Box$

\section{Conclusion}\label{cremark}

In this paper, we make two new contributions to the future Internet modeled as IoB: developing a block based quantum channel networking technology to handle its security modeling in face of the quantum supremacy and establishing IoB based FinTech platform model with dynamic pricing for stable digital currency. The interaction between our new contributions is also addressed. In doing so, we establish a generalized IoB security model by quantum channel networking in terms of both time and space quantum entanglements with QKD. Our IoB can interact with general structured things such as supply chain, healthcare, and energy grid systems, which can be considered as a generalized IoT serving vector-valued big data streams requiring synchronized quantum computing in supporting of real-world digital economy with online trading and payment capability via stable digital currency. Thus, within our designed QKD, a generalized random number generator for private and public keys is proposed by a mixed zero-sum and non-zero-sum resource-competition oriented dynamic pricing policy. It consists of three stages: The first one is a zero-sum game-competition based decision of dynamic users' selection among all the users; The second and third ones are corresponding to a non-zero-sum game-competition based strategy for resource-sharing among selected users while conducting dynamic pricing. The effectiveness of our policy is justified by diffusion modeling with approximation theory and numerical implementations.

\end{document}